 \numberwithin{equation}{section}
\let\nc\newcommand
\let\renc\renewcommand
\theoremstyle{plain}
\newtheorem{thm}{Theorem}
\newtheorem{prop}[thm]{Proposition}
\newtheorem{cor}[thm]{Corollary}
\newtheorem{lem}[thm]{Lemma}
\newtheorem{conjecture}[thm]{Conjecture}
\theoremstyle{definition}
\newtheorem{defn}[thm]{Definition}
\theoremstyle{remark}
\newtheorem{remark}[thm]{Remark}
\newtheorem{rem}[thm]{Remark}
\numberwithin{thm}{section}
\renewcommand{\subsection}{\@startsection{subsection}{2}{0pt}{-3ex
plus -1ex minus -0.2ex}{-2mm plus -0pt minus
-2pt}{\normalfont\bfseries}} \makeatother
\numberwithin{equation}{section} % added by Ali
\DeclareMathOperator{\gr}{\mathrm{gr}}
\DeclareMathOperator{\Rep}{\mathrm{Rep}}
\newcommand{\bv}{\boldsymbol{v}}
\newcommand{\beq}{\begin{equation}\label}
\newcommand{\eeq}{\end{equation}}
\DeclareMathOperator{\Spec}{\mathrm{Spec}}
\newcommand{\iso}{{\;\stackrel{_\sim}{\longrightarrow}\;}}
\DeclareMathOperator{\Hom}{\mathrm{Hom}}
\DeclareMathOperator{\GL}{\mathrm{GL}}
\nc{\Z}{\mathbb{Z}}
\newcommand{\N}{\mathbb{N}}
\newcommand{\Q}{\mathbb{Q}}
\newcommand{\R}{\mathbb{R}}
\newcommand{\C}{\mathbb{C}}
\nc{\rank}{\textrm{rank} \,}
\nc{\ds}{\dots}
\let\mc\mathcal
\let\mf\mathfrak
\nc{\mbf}{\mathbf}
\nc{\Res}{\mathsf{Res} \, }
\nc{\Ind}{\mathsf{Ind} \, }
\nc{\cont}{\textrm{cont}}
\newcommand{\Id}{\mathrm{Id}}
\nc{\msf}{\mathsf}
\nc{\minusone}{-1}
\nc{\minustwo}{-2}
\nc{\Mod}{\mathrm{Mod} \,}
\nc{\ms}{\mathscr}
\nc{\Frac}{\mathrm{Frac} \,}
\nc{\ra}{\rightarrow}
\nc{\hra}{\hookrightarrow}
\nc{\lab}{\label}
\renc{\O}{\mc{O}}
\nc{\Tan}{\mc{T}}
\nc{\ul}{\underline}
\nc{\s}{\mathfrak{S}}
\nc{\g}{\mf{g}}
\nc{\pa}{\partial}
\nc{\tit}{\textit}
\nc{\Maxspec}{\mathrm{Maxspec} \, }
\nc{\gldim}{\mathrm{gl.dim}}
\nc{\rkm}{\mathrm{rk} \, (\mf{m})}
\nc{\sm}{\mathrm{sm}}
\nc{\PD}{\mathbb{PD}}
\nc{\hilb}{\textrm{Hilb}}
\nc{\T}{\mathbb{T}}
\nc{\X}{\mathbb{X}}
\nc{\F}{\mathbb{F}}
\nc{\id}{\msf{id}}
\nc{\A}{\mathbb{A}}
\nc{\Grat}{\mc{Grat}}
\nc{\Squo}[1]{\A^{(#1)}}
\nc{\twist}{\mathrm{twist}}
\nc{\Cd}{\mc{C}}
\nc{\Span}{\mathrm{Span}}
\nc{\Grass}{\mathrm{Gr}}
\nc{\Fr}{\mathrm{Fr}}
\nc{\pco}[1]{k[V]^{p\mathrm{co} #1}}
\nc{\Irr}{\mathrm{Irr}}
\renc{\o}{\otimes}
\renc{\gr}{\mathsf{gr}}
\nc{\fin}{\mathrm{fin}}
\nc{\aff}{\mathrm{aff}}
\nc{\algD}{\mf{D}}
\nc{\hr}{\mf{h}_{\textrm{reg}}}
\nc{\D}{\mathscr{D}}
\nc{\PIdeg}{\mathrm{P.I.-degree}}
\nc{\ch}{\mathrm{ch}}
\nc{\ev}{\mathsf{ev}}
\nc{\Stab}{\mathrm{Stab}}
\nc{\Der}{\mathrm{Der}}
\nc{\rightsim}{\stackrel{\sim}{\longrightarrow}}
\nc{\HZ}{H_{\mbf{h},\Z}(\Z_m)}
\nc{\sing}{\mathrm{sing}}
\nc{\dd}{\mathscr{D}}
\nc{\bc}{\mathbf{c}}
\nc{\vc}{\underline{\mathbf{c}}}
\nc{\ba}{\mathbf{a}}
\nc{\reg}{\mathrm{reg}}
\nc{\Amp}{\mathrm{Amp}}
\nc{\Nef}{\mathrm{Nef}}
\nc{\SL}{\mathrm{SL}}
\nc{\Sp}{\mathrm{Sp}}
\nc{\Sym}{\mathrm{Sym}}
\nc{\Mov}{\mathrm{Mov}}
\nc{\Pic}{\mathrm{Pic}}
\nc{\Cs}{\C^{\times}}
\nc{\Nak}[3]{\mf{M}_{{#1}} ({#2},{#3}) }
\nc{\Naka}[2]{\mf{M}({#1},{#2}) }
\nc{\Mtheta}[1]{\mc{M}_{#1}}
\nc{\bw}{\mathbf{w}}
\nc{\bn}{\mathbf{n}}
\nc{\CB}{\mathrm{CB}}
\nc{\GVect}{\Lambda}
\nc{\pZ}{\overline{Z}}
\nc{\Qu}{Q}
\nc{\Supp}{\mathrm{Supp}}
\nc{\mr}{\mathrm}
\DeclareMathOperator{\Ab}{Ab}
\newcommand{\Mq}{\mathfrak{M}}
\newcommand{\dQ}{\overline{Q}}
\newcommand{\CC}{\mathbb{C}}
\newcommand{\RR}{\mathbb{R}}
\newcommand{\ZZ}{\mathbb{Z}}
\newcommand{\NN}{\mathbb{N}}
\DeclareMathOperator{\Aut}{Aut}
\DeclareMathOperator{\Out}{Out}
\DeclareMathOperator{\Inn}{Inn}
\DeclareMathOperator{\Ort}{O}
\DeclareMathOperator{\SO}{SO}
\newcommand{\onto}{\twoheadrightarrow}
\newcommand{\into}{\hookrightarrow}
\newcommand{\one}{\ensuremath{(\mathrm{i})}}
\newcommand{\two}{\ensuremath{(\mathrm{ii})}}
\newcommand{\three}{\ensuremath{(\mathrm{iii})}}
\newcommand{\git}{\ensuremath{/\!\!/\!}}
\nc{\Hyp}{X}%\mathcal{H}}
\nc{\U}{U}
\begin{document}

\title{All 81 crepant resolutions of a finite quotient singularity are hyperpolygon spaces}

\author{Gwyn Bellamy}
\address{School of Mathematics and Statistics, 
University of Glasgow, University Place,
Glasgow G12 8QQ, United Kingdom}
\email{gwyn.bellamy@glasgow.ac.uk}
\urladdr{http://www.maths.gla.ac.uk/~gbellamy/}

\author{Alastair Craw} 
\address{Department of Mathematical Sciences, 
University of Bath, 
Claverton Down, 
Bath BA2 7AY, 
United Kingdom}
\email{a.craw@bath.ac.uk}
\urladdr{http://people.bath.ac.uk/ac886/}

\author{Steven Rayan}
\address{Department of Mathematics \& Statistics and Centre for Quantum Topology and Its Applications (quanTA), University of Saskatchewan, 
McLean Hall, 106 Wiggins Road, 
Saskatoon, SK S7N 5E6, 
Canada}
\email{rayan@math.usask.ca}
\urladdr{https://researchers.usask.ca/steven-rayan/}

\author{Travis Schedler} 
\address{Imperial College London, Huxley Building,
South Kensington Campus, London SW7 2AZ,
United Kingdom}
\email{t.schedler@imperial.ac.uk}
\urladdr{https://www.imperial.ac.uk/people/t.schedler}

\author{Hartmut Weiss}
\address{Mathematisches Seminar, Christian-Albrechts-Universit\"{a}t Kiel, Heinrich-Hecht-Platz 6, Kiel D-24118, Germany}
\email{weiss@math.uni-kiel.de}
\urladdr{https://www.math.uni-kiel.de/geometrie/de/hartmut-weiss}

\maketitle

\begin{abstract}
We demonstrate that the linear quotient singularity for the exceptional subgroup $G$ in $\Sp(4,\CC)$ of order 32 is isomorphic to an affine quiver variety for a 5-pointed star-shaped quiver. This allows us to construct uniformly all 81 projective crepant resolutions of $\C^4/G$ as hyperpolygon spaces by variation of GIT quotient, and we describe both the movable cone and the Namikawa Weyl group action via an explicit hyperplane arrangement. More generally, for the $n$-pointed star shaped quiver, we describe completely the birational geometry for the corresponding hyperpolygon spaces in dimension $2n-6$; for example, we show that there are 1684 projective crepant resolutions when $n=6$. We also prove that the resulting affine cones are \emph{not} quotient singularities for $n \geq 6$.
\end{abstract}

\section{Introduction}
This paper has three main goals: (a) to establish a remarkable coincidence of four-dimensional symplectic cones, namely a finite quotient singularity and a hyperpolyon space; (b) to describe completely the birational geometry of hyperpolygon spaces in \emph{all} even dimensions, including all their projective crepant resolutions; and (c) to show that the phenomenon from (a) does not occur for
hyperpolygon spaces in dimension greater than four. 

Goal (a) concerns, on one hand, the symplectic quotient singularity $\C^4/G$ associated to an exceptional subgroup $G$ in $\Sp(4,\CC)$ of order 32 that is known to admit 81 projective crepant resolutions \cite{BellamyCounting,81Coxres}. On the other hand, we consider the affine hyperpolygon space $X_5(0)$ that  admits projective crepant resolutions by variation of GIT quotient for quiver varieties \cite{NakDuke98,KonnoHyper}. 

Goal (b) is achieved by exploiting the identification between hyperpolygon spaces $X_n(0)$ and quiver varieties for a star-shaped quiver. This enables us to give a complete, uniform description of the birational geometry of hyperpolygon spaces $X_n(\theta)$ for any $n\geq 3$, and in the case $n=5$, we use the semi-invariant ring of the quiver to identify $\C^4/G$ with $X_5(0)$.  As a consequence, we describe $\CC^4 / G$ and its crepant resolutions in terms of quiver varieties, leading to a transparent, uniform construction of all $81$ projective crepant resolutions of $\C^4/G$. This provides a conceptual, quiver-theoretic explanation for the main results of \cite{81Coxres} that bypasses computer calculations.

Note that $X_4(0)$ is well known to be isomorphic to the du Val singularity $\C^2/Q_8$ for $Q_8$ the quaternionic group of order eight. As a result, $X_n(0)$ is a finite quotient singularity for $n \leq 5$.  It is therefore an obvious question to ask whether any of these are finite quotient singularities for larger $n$. Goal (c) 
answers this question in the negative: $X_n(0)$ is not a finite quotient singularity for any
$n>5$, or equivalently, for hyperpolygon spaces in dimension greater than four.

The phenomenon that we reveal in goal (a) can be viewed as special in two ways: it establishes one more finite quotient singularity which is a quiver variety; and it reveals another quiver variety which is a finite quotient singularity. 

This is very unusual since,
unlike the case of quiver varieties, \emph{very few} finite quotient singularities admit crepant resolutions.  In fact, the classification of finite, symplectic quotient
singularities admitting a symplectic resolution is nearly complete (there are up to 45 remaining finite symplectic quotients in dimension four that are not expected to admit crepant resolutions, see \cite{BellSchmittThiel}): they fall into one infinite family, namely symmetric powers of du Val singularities, and two exceptional cases in dimension four, one of which is the quotient singularity $\CC^4/G$ that we study in goal (a).  The infinite family is well known to be isomorphic to quiver varieties for extended Dynkin quivers, with crepant resolutions given by variation of GIT quotient (after framing the quiver). In this context, then, our results for $\CC^4/G$ are consistent with the behaviour in the infinite family.  Note that, in the remaining
exceptional case, although it is unknown whether the resolution of singularities 
 can be constructed by variation of GIT quotient, the singularity itself was recently identified to be a symplectic quotient of a vector space by a reductive group \cite[Example 5.6]{BLLT}, also known as a Higgs branch variety.

Thus a consequence of our result is that \emph{all} finite symplectic quotient singularities admitting crepant resolutions are such quotients (up to the 45 remaining cases mentioned above). The four-dimensional example of this paper, like the symmetric powers of du Val singularities, is a quiver variety
with resolution given by variation of GIT.  In this sense the phenomenon established in goal (a) is not just a special case, but completes a general pattern.

\subsection{An exceptional quotient singularity}
The group $G = Q_8 \times_{\ZZ_2} D_8 < \Sp(4,\CC)$ of order $32$ was identified in \cite{BS-sra} as one for which the linear quotient
$\CC^4/G$ admits a crepant (equivalently, a symplectic) 
resolution of singularities; here
$Q_8 < \Sp(2,\CC)$ is the quaternionic group of order $8$,
$D_8 < \Ort(2,\RR)$ is the dihedral group of order $8$, and $\ZZ_2$ is identified
with their respective centres. 
Subsequently, it was established in
\cite{BellamyCounting} that $\CC^4/G$ admits precisely 81
projective crepant resolutions; simultaneously,
 these resolutions were all constructed explicitly in \cite{81Coxres} by variation of GIT quotient.

More recently,  Mekareeya~\cite{Mek-physqt} discovered that this quotient singularity should be isomorphic to a
Higgs branch variety. More precisely, \cite{Mek-physqt} showed that the Hilbert series of the ring of functions on the singularity $\CC^4/G$ coincides with that of the conical quiver
variety $\Mq_{0}(\bv,0)$ associated to a star-shaped quiver $Q$ with a central
vertex 
and five external vertices, 
where the components of the dimension vector $\bv$ equal 2 on the central vertex
and 1 elsewhere 
(see Section \ref{ss:quiver} for precise definitions). The present work was motivated by our desire to  give a conceptual geometric proof of Mekareeya's numerical observation:

\begin{thm}\label{t:isom}
	There is a Poisson isomorphism $\CC^4 / G \iso X_5(0):= \Mq_{0}(\bv,0)$.
\end{thm}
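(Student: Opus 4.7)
The plan is to match explicit presentations of the two coordinate rings via a matrix $F$ of $G$-semi-invariants on $\CC^4$. For the quiver side, write $A \in \Hom(\CC^5, \CC^2)$, $B \in \Hom(\CC^2, \CC^5)$ for the maps of the doubled $5$-star and set $T := BA \in \Mat_{5\times 5}(\CC)$. By the first and second fundamental theorems for $\GL_2$, the $\GL_2$-invariants on the representation space are $\CC[T_{ij}]$ modulo vanishing of the $3\times 3$ minors. The moment map equations $AB = 0$ and $b_i a_i = 0$ translate to $T^2 = B(AB)A = 0$ and $T_{ii} = 0$; since $T^2 = 0$ automatically forces $\rk T \le 2$, the rank conditions become redundant. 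The residual torus $(\CC^*)^5/\CC^*$ acts on $T$ by conjugation, yielding
\[
\CC[X_5(0)] \;\cong\; \bigl(\CC[T_{ij}]/(T^2,\, T_{ii})\bigr)^{(\CC^*)^5/\CC^*},
\]
a ring generated by cycle products $T_{i_1 i_2} T_{i_2 i_3} \cdots T_{i_k i_1}$ in the manner of Le Bruyn--Procesi.

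For the group side, decompose $\CC^4 = V_1 \oplus V_2$ with $Q_8$ acting on $V_1$ and $D_8$ on $V_2$, and compute $\CC[V_1 \oplus V_2]^G$ from the classical invariants of the two factors restricted to the $\ZZ_2$-diagonal subgroup. The bridge is a $5 \times 5$ matrix $F = (f_{ij})$ of bihomogeneous $G$-semi-invariants $f_{ij} \in \CC[V_1 \oplus V_2]$ with $f_{ii} = 0$, satisfying $F^2 = 0$ identically in $\CC[\CC^4]$, and with $f_{ij}$ transforming under the character $\chi_i \chi_j^{-1}$ for suitably chosen characters $\chi_1, \dots, \chi_5 \in \Hom(G, \CC^*)$. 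Under this choice the character weights of $f_{ij}$ match the conjugation weights of $T_{ij}$ under $(\CC^*)^5/\CC^*$, so $T_{ij} \mapsto f_{ij}$ descends to a graded ring homomorphism $\varphi\colon \CC[X_5(0)] \to \CC[\CC^4]^G$.

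To conclude, both sides are integral coordinate rings of $4$-dimensional normal affine symplectic cones; once $\varphi$ is shown surjective (by checking that cycle products in the $f_{ij}$ generate $\CC[\CC^4]^G$, via a Molien-series comparison on the right against the Poincar\'e series of $X_5(0)$ on the left), it is automatically injective and hence an isomorphism of graded Poisson algebras. Poisson compatibility is automatic because $F^2 = 0$ realizes the moment map at the central vertex, so the Marsden--Weinstein bracket on $\mu^{-1}(0)/\!\!/G_\bv$ matches the bracket descending from the standard symplectic form on $\CC^4$. The main obstacle is the construction of $F$ itself: one needs ten $G$-semi-invariants with prescribed characters whose matrix square vanishes identically on $\CC^4$. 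This is precisely the \emph{semi-invariant ring of the quiver} identification alluded to in the abstract, and amounts to producing an explicit $G$-equivariant morphism $\CC^4 \to \mu^{-1}(0) \subset \Rep(\dQ, \bv)$ that induces the sought-after isomorphism on GIT quotients.
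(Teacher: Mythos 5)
Your overall architecture---present both coordinate rings by generators and relations, match generators through their weights under the residual torus on one side and $\Ab(G)$ on the other, then conclude from surjectivity plus irreducibility and equal dimension---is the same as the paper's, and your quiver-side presentation (the matrix $T=BA$ with $T^2=0$, $T_{ii}=0$, the torus acting by conjugation, invariants generated by cycle products) is an equivalent $\GL_2$-flavoured version of the paper's $\SL_2$-invariant presentation via the skew pairings $\varphi_{a,b}=\ev_a\wedge\ev_b$. But the proof has a hole exactly where you flag it: \emph{producing} ten $G$-semi-invariants with the prescribed characters whose matrix square vanishes identically on $\CC^4$ is the entire content of the theorem, and nothing in your proposal constructs them. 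The paper's resolution of this is the conceptual heart of the argument: since $[G,G]=\{\pm 1\}$, the ring $\CC[V]^{[G,G]}$ is generated by $\Sym^2 V^*$, and the exceptional isogeny $\gamma\colon \Sp(V)\to\SO(W)$, with $W\subset\wedge^2 V^*$ the five-dimensional summand, gives an $\SO(W)$-equivariant isomorphism $\iota\colon \Sym^2 V^*\cong\wedge^2 W$. Because $\gamma(G)=\ZZ_2^4$ is simultaneously diagonalisable inside $\SO(W)$, the elements $\psi_{i,j}=\iota^{-1}(w_i\wedge w_j)$ for an orthogonal eigenbasis $w_1,\dots,w_5$ are precisely the required semi-invariants, and the relations they satisfy (the Pl\"ucker identities $\psi_{i,j}\psi_{k,\ell}-\psi_{i,k}\psi_{j,\ell}+\psi_{i,\ell}\psi_{j,k}=0$ together with $\sum_i\psi_{i,j}\psi_{i,k}=0$ from orthonormality) match the quiver relations on the nose. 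This is where the ``$5$'' of the five-pointed star comes from; your alternative plan to grind out $\CC[V_1\oplus V_2]^G$ from the classical invariant theory of $Q_8$ and $D_8$ separately is a proposal for a computation whose success is exactly what needs to be proved.

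Two secondary issues. First, your claim that $T^2=0$ makes the $3\times 3$-minor relations redundant is only set-theoretic ($T^2=0$ forces $\rk T\le 2$ on points); to present $\CC[\mu^{-1}(0)]^{\GL_2}$ one must identify the invariant part of the ideal generated by the moment-map entries, which the paper does by an explicit elimination argument and by invoking Crawley-Boevey's theorem that $\mu^{-1}(0)$ is reduced and irreducible---the latter is also what legitimises the ``surjective hence injective'' step, which you assert without establishing that $X_5(0)$ is integral. Second, a Molien/Poincar\'e-series comparison would give surjectivity if carried out, but the paper gets it for free: the $\psi_{i,j}$ span $\Sym^2 V^*$ and hence generate $\CC[V]^{[G,G]}$, and the weight bookkeeping (every index occurring an even number of times on both sides, using that all characters of $\Ab(G)=\ZZ_2^4$ are $2$-torsion, so $\chi_j^{-1}=\chi_j$ as you note) shows the restriction to torus-invariants surjects onto $\CC[V]^G$. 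Your remark that Poisson compatibility is ``automatic'' likewise needs substance: absent an explicit $G$-equivariant symplectic correspondence $\CC^4\to\mu^{-1}(0)$, one should check the brackets on generators.
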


 While comparing these two varieties directly seems to be difficult, it is natural to compare instead the Cox ring of the singularity $\CC^4 / G $, as studied in \cite{81Coxres, HausenKeicher}, with a semi-invariant ring $\CC[\mu^{-1}(0)]^{\SL_2}$,  where $\mu$ is the moment map arising in the description of $\Mq_{0}(\bv,0)$ as a GIT quotient. We prove that a $(\CC^\times)^5$-invariant subring of $\CC[\mu^{-1}(0)]^{\SL_2}$ is isomorphic to the coordinate ring of $\Mq_{0}(\bv,0)$, and Theorem \ref{t:isom} follows by passing to the torus-invariant subrings.  This semi-invariant ring is closely related to the Cox ring of a crepant resolution of $\Mq_{0}(\bv,0)$; the relationship between these two rings will be discussed in the more general context of quiver varieties in \cite{BCSCox}. 
 
 \subsection{A hyperplane arrangement}
 It turns out that studying quiver varieties for the star-shaped quiver with $n$ external vertices (for $n\geq 3$, see Figure~\ref{fig:quivers}) and dimension vector $\bv=(2,1,1,\dots ,1)$ is not substantially harder than the case when $n=5$. We consider this more general setting, where for any stability parameter $\theta\in \Theta:=\Q^n$,  the quiver variety
 \[
 X_n(\theta):= \Mq_{0}(\bv,\theta) = \mu^{-1}(0)^{\theta\text{-ss}}\git\GL_{\bv}
 \]
 is obtained as a GIT quotient (see Section~\ref{ss:quiver}). For any sufficiently general stability parameter $\theta\in \Theta$ satisfying $\theta_i>0$, this \emph{hyperpolygon space} provides a hyperk\"ahler analogue of a moduli space of polygons with prescribed edge lengths \cite{HausmannKnutson}. Hyperpolygon spaces have proved to be an effective testing ground for conjectures on quiver varieties; see, for example, \cite{KonnoHyper,HaradaProudfoot05}.

 To study these spaces, consider the following hyperplane arrangement in $\Theta$:
\[
   \mc{A} = 
   \big\{ L_i, H_I \mid 1\leq i\leq n, \{1\}\subseteq I\subseteq \{1,\dots,n\}\big\},
\]
where $L_i = \{ \theta \in \Theta \mid \theta_i = 0\}$ is a coordinate hyperplane and $H_I= \{ \theta \in \Theta \mid \sum_{i \in I} \theta_i = \sum_{j \notin I} \theta_j\}$. This  arrangement is 
known in the theory of (hyper-)polygon spaces: avoiding the hyperplanes $H_I$ is precisely the condition that your polygon should not contain a line \cite[Proposition~4.3]{HausmannKnutson}. Our interest lies largely with stability parameters lying in the complement to these hyperplanes: a \textit{chamber} of $\Theta^{\reg} = \Theta \smallsetminus \bigcup_{H\in \mathcal{A}} H$ is the intersection with $\Theta$ of a connected component of the real hyperplane arrangement complement in $\Theta\otimes_{\ZZ} \R$. We prove (see Proposition~\ref{prop:Hnthetasmooth}) that $\Hyp_n(\theta)$ is smooth if and only if $\theta \in \Theta^{\reg}$ and, moreover, the chambers in the complement of $\mathcal{A}$ are precisely the GIT chambers arising in the GIT construction of  the spaces $X_n(\theta)$.

 \subsection{The birational geometry of hyperpolygon spaces}
 Obtaining an explicit understanding of the GIT chamber decomposition proved to be a key step in the recent quiver variety description of all projective crepant resolutions for symmetric powers of Du Val singularities \cite{BC}. Here, we apply similar arguments to show that variation of GIT quotient produces all projective crepant resolutions of the affine hyperpolygon space $X_n(0)=\Mq_0(\bv,0)$. The resolutions $X_n(\theta)$ for $\theta\in \Theta^{\reg}$ all share the same movable cone of line bundle classes, with different resolutions corresponding to different ample cones. We use the linearisation map for the GIT construction to identify each GIT chamber in $\Theta$ with the ample cone of the appropriate hyperpolygon space (up to the action of Namikawa's Weyl group, in this case $\ZZ_2^n$). We also show that every projective crepant resolution arises in this way, giving us a precise count of the resolutions.  

To state the results in more detail, observe that the positive orthant 
\[
F = \{ \theta \in \Theta \mid \theta_i \ge 0, \, \forall i \neq 0\}
\]
is the union of the closures of a collection of GIT chambers in $\Theta$. In \eqref{eqn:C_+} we fix once and for all a chamber $C_+$ in $F$, 
define $X:=X_n(\theta)$ for $\theta\in C_+$ and write $Y:=X_n(0)$. Let $N^1(X/Y)$ denote the relative N\'{e}ron--Severi space, defined as the rational vector space spanned by equivalence classes of line bundles on $X$ up to numerical equivalence relative to curves contracted to a point in $Y$.

\begin{thm}
\label{thm:resolutionsofstar}
Let $n\geq 5$. There is an isomorphism $L_F\colon \Theta\rightarrow N^1(X/Y)$ of rational vector spaces such that:
 \begin{enumerate}
     \item[\one] $L_F$ identifies the closed polyhedral cone $F$ with the movable cone $\Mov(X/Y)$; and 
     \item[\two] $L_F$ induces a bijection between the chambers in $F$ and the ample cones $\mr{Amp}(X_i/Y)$ of all projective crepant resolutions of $Y$.
\end{enumerate}
 Thus, for each $\theta\in F$, the space $X_n(\theta)$ is isomorphic to the birational model of $X$ determined by the line bundle $L_F(\theta)$, and conversely, every partial crepant resolution of $Y=X_n(0)$ is of the form $X_n(\theta)$ for some $\theta\in F$.
\end{thm}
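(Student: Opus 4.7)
The plan is to adapt to our setting the variation of GIT strategy employed in \cite{BC} for symmetric powers of Du Val singularities. The starting point is the linearisation map: each $\theta\in \Theta$, viewed as a character of $\GL_{\bv}$, produces a $\GL_{\bv}$-equivariant line bundle $\mc{O}_{\mu^{-1}(0)}\otimes \chi_\theta$ on the zero fibre of the moment map, and descent to the GIT chamber $C_+$ yields a line bundle $L_\theta$ on $X$. Assembling these defines a $\Q$-linear map $L_F\colon \Theta \to N^1(X/Y)$, and by general GIT theory $L_F(\theta)$ is nef relative to $Y$ for every $\theta$ in the closure of $C_+$.

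The first task is to show that $L_F$ is an isomorphism. Surjectivity should follow from Kirwan-type surjectivity for $H^2$ of smooth Nakajima quiver varieties, forcing every divisor class on $X$ to descend from a character of $\GL_{\bv}$; injectivity can be obtained by pairing with a dual basis of classes of curves contracted by $X\to Y$ that come from the tautological bundles on the $n$ external vertices. Combined with the proposition that $X_n(\theta)$ is smooth precisely when $\theta\in \Theta^{\reg}$, and the identification of chambers of $\mc{A}$ inside $F$ with GIT chambers, this provides the framework for comparing chambers to ample cones.

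For part \two, fix a GIT chamber $C\subseteq F$ and $\theta\in C$. By Geometric Invariant Theory, $L_\theta$ is relatively ample on $X_n(\theta)\to Y$, so $L_F(C)\subseteq \Amp(X_n(\theta)/Y)$. Equality is a consequence of the standard wall-crossing behaviour: crossing a GIT wall inside $F$ induces either a flop or a small contraction over $Y$ between neighbouring birational models $X_n(\theta)$, which is exactly how the ample cone decomposition of $\Mov(X/Y)$ sees wall-crossing. Iterating over all GIT chambers of $F$ therefore tiles $L_F(F)$ by the ample cones of the models $X_n(\theta)$, giving the bijection in \two.

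For part \one and the final statement, each $X_n(\theta)$ with $\theta\in F^\circ \cap \Theta^{\reg}$ is a projective crepant resolution of $Y$ that is isomorphic to $X$ in codimension one, hence $L_F(F)\subseteq \Mov(X/Y)$. For the reverse containment, and to ensure that \emph{every} projective crepant resolution arises from some $\theta\in F$, I would invoke Namikawa's theorem for symplectic singularities, which places projective crepant resolutions of $Y$ in bijection with chambers of $\Mov(X/Y)$ modulo the Namikawa Weyl group. In our setting this group should be $\Z_2^n$, acting on $\Theta$ by sign changes of the coordinates, and the orthant $F$ is a fundamental domain for this action on $\Theta^{\reg}$. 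I expect the main obstacle to be precisely this last step: one must identify the Namikawa Weyl group concretely with $\Z_2^n$ and match its reflection hyperplanes with the $L_i$, which requires analysing partial resolutions of $Y$ and their symplectic leaves rather than being visible directly from the GIT picture.
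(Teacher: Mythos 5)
Your overall strategy --- the linearisation map, chambers mapping to ample cones, tiling the movable cone --- matches the paper's, and your suggestions for the isomorphism statement (Kirwan surjectivity for surjectivity, the $n$ exceptional divisor classes for injectivity; the paper instead gets $\dim N^1(X/Y)=n$ from Konno's computation of $b_2(X)$ together with the $n$ exceptional divisors) are workable. The genuine gap is that you never carry out the local analysis of what happens at a wall, and both halves of the theorem hinge on it. For part \two\ you assert that $L_F(C)=\Amp(X_n(\theta)/Y)$ by ``standard wall-crossing behaviour'', but the issue is not that each $X_n(\theta)$ carries an ample descended bundle --- it is that the class $L_{C_+}(\theta)\in N^1(X/Y)$, transported to $X_n(\theta)$ by strict transform, coincides with that ample bundle. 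This requires knowing that adjacent chambers inside $F$ give models isomorphic in codimension one with matching tautological bundles. The paper establishes this by computing $\Sigma_{\theta_0}(\bv)=\{\alpha,\beta,\bv\}$ for $\theta_0$ generic in an interior wall $H_I$, deducing that $X_n(\theta_0)$ has a single isolated singularity \'etale-locally isomorphic to $\overline{\mc{O}}_{\mr{min}}\subset \mf{sl}(n-2)$, that both contractions onto it are semismall, and hence that the wall-crossing is a Mukai flop with $L_C=L_{C'}$. Without some version of this computation your ``flop or small contraction'' claim is unsupported.

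For part \one\ you take a genuinely different route: you invoke the fundamental-domain property of $\Mov(X/Y)$ for the Namikawa Weyl group and try to match $F$ with it. (The version of Namikawa's theorem you quote is slightly off: projective crepant resolutions are in bijection with the chambers of $\Mov(X/Y)$ itself, equivalently with chambers of $N^1(X/Y)$ modulo $W_Y$; the group permutes the translates of $\Mov(X/Y)$, it does not identify chambers within it.) This route can be made to work, but, as you anticipate, it requires identifying $W_Y$ with $\Z_2^n$ and its reflection hyperplanes with $L_F(L_1),\dots,L_F(L_n)$, and the only way to do that is again a local computation: for $\theta_0$ generic in $L_i$, the morphism $X_n(\theta)\to X_n(\theta_0)$ is \'etale-locally $\C^{2(n-4)}\times(\C^2\to\C^2/\Z_2)$, i.e.\ a divisorial contraction, so $L_F(L_i\cap\overline{C})$ lies in $\partial\Mov(X/Y)$. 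Once you have that, the paper's own argument closes the proof without mentioning $W_Y$ at all: $L_F(F)$ is a union of full chambers of the fan $\Mov(X/Y)$ all of whose outer walls lie in $\partial\Mov(X/Y)$, hence equals $\Mov(X/Y)$, and every resolution's ample cone is one of these chambers. The Namikawa Weyl group is thus a detour here; the paper only introduces it afterwards, via Nakajima reflection functors, to handle parameters outside $F$.
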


In the course of proving Theorem~\ref{thm:resolutionsofstar}, we show that crossing any wall contained in the interior of $F$ induces a Mukai flop between the corresponding hyperpolygon spaces. This result was originally established by Godinho--Mandini~\cite[Section~4]{GodinhoMandini}; here, we bypass the study of wall crossing for moduli spaces of parabolic Higgs bundles from~\cite{Thaddeus02}, deducing the result instead from an \'{e}tale local description of wall crossing for quiver varieties \cite[Section~3]{BC}.

\subsection{Beyond the movable cone}
There is an action of the group $\Z_2^n$ on $\Theta$, where the generator of the $i^{\text{th}}$ factor acts by reflection in the supporting hyperplane $\{\theta\in \Theta \mid \theta_i=0\}$ of the cone $F$. Notice that $\Z_2^n$ permutes the hyperplanes in $\mc{A}$, so it acts on the set of all chambers. 

\begin{prop}
\label{prop:beyondF}
 Let $n\geq 5$. The isomorphism $L_F$ from Theorem~\ref{thm:resolutionsofstar} identifies the action of $\Z_2^n$ on $\Theta$ with the action of the Namikawa Weyl group on $N^1(X/Y)$. In particular, $\theta \in C$ and $\theta' \in C'$ satisfy $\Hyp_n(\theta) \cong \Hyp_n({\theta'})$ as spaces over $\Hyp_n(0)$ if and only if there exists $w \in \Z_2^n$ such that $w(C) = C'$. 
\end{prop}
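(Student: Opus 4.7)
The plan is first to identify the Namikawa Weyl group $W$ of $Y = X_n(0)$ explicitly as $\Z_2^n$, and then to show via the isomorphism $L_F$ of Theorem~\ref{thm:resolutionsofstar} that the canonical action of $W$ on $N^1(X/Y)$ corresponds to the reflection action of $\Z_2^n$ on $\Theta$ in the coordinate hyperplanes $L_i$. The ``in particular'' clause will then follow from the general criterion for isomorphism of crepant resolutions over a conical symplectic singularity.

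For the first step, I would use the symplectic leaf stratification of $Y$ as a Nakajima quiver variety. By Namikawa's theory, $W$ is generated by the monodromy reflections associated to codimension-$2$ symplectic leaves of $Y$ whose transverse slice is a Du Val singularity. The standard description of the leaves of an affine Nakajima quiver variety via decompositions of $\bv$ into positive roots identifies $n$ codimension-$2$ leaves for the dimension vector $\bv = (2, 1, \dots, 1)$, one for each decomposition $\bv = (\bv - \alpha_i) + \alpha_i$ with $i$ an external vertex; in each case the transverse slice is the Du Val singularity $\C^2/\Z_2$ of type $A_1$. Since $(\bv, \alpha_i) = 2v_i - v_0 = 0$ whenever $i$ is external but $(\bv, \alpha_0) = 4 - n \neq 0$ at the central vertex for $n \geq 5$, no other simple reflection preserves $\bv$, so $W \cong \Z_2^n$.

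Next I would show that the reflection $s_i \in W$ at each external vertex $i$ corresponds under $L_F^{-1}$ to the $i$-th generator of $\Z_2^n$. Since $(\bv, \alpha_i) = 0$, the simple reflection $s_i$ of the Kac--Moody Weyl group lifts via a Maffei-type reflection functor to a Poisson isomorphism $X_n(\theta) \iso X_n(s_i \theta)$ over $Y$, and a direct calculation in the coordinates $(\theta_1, \dots, \theta_n)$ on $\Theta$ shows that $s_i$ acts by the sign change $\theta_i \mapsto -\theta_i$. Since reflection functors preserve ample classes, the induced action of $s_i$ on $N^1(X/Y)$ agrees under $L_F$ with this sign change, establishing the $\Z_2^n$-equivariance of $L_F$. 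The ``in particular'' clause then follows from the general fact that two projective crepant resolutions of $Y$ are isomorphic as spaces over $Y$ if and only if their ample cones lie in a single orbit of the Namikawa Weyl group, combined with part~\two of Theorem~\ref{thm:resolutionsofstar} which identifies chambers of $F$ with the ample cones of crepant resolutions of $Y$.

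The main obstacle is the first step: although the Namikawa Weyl group of a conical symplectic singularity is well-defined, its explicit computation here demands a careful enumeration of the codimension-$2$ symplectic leaves of $Y$ together with a transverse slice calculation at each. The cleanest route would be to invoke a general result identifying the Namikawa Weyl group of $\Mq_0(\bv, 0)$ with the subgroup of the Kac--Moody Weyl group generated by simple reflections $s_i$ at vertices satisfying $(\bv, \alpha_i) = 0$, after which the Cartan computation above immediately yields $W \cong \Z_2^n$.
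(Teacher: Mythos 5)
Your proposal is correct and follows essentially the same route as the paper: the paper identifies the $n$ codimension-two symplectic leaves of $\Hyp_n(0)$ with transverse slice $\C^2/\Z_2$ (Lemma~\ref{lem:sympleavesH0} and Proposition~\ref{prop:dimN1}), concludes that the Namikawa Weyl group is $\Z_2^n$ generated by reflections in the supporting hyperplanes $L_i$ of $F$, realises these generators by Nakajima reflection functors using exactly the computation $(\bv,e_i)=0$ for $i>0$, and deduces the ``if and only if'' from the movable cone being a fundamental domain for the Namikawa Weyl group action together with Theorem~\ref{thm:resolutionsofstar}\two. The only cosmetic difference is that the paper packages the final equivalence as Lemma~\ref{lem:reflection} rather than citing the fundamental-domain property directly at that point.
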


The spaces $X_n(\theta)$ have traditionally been studied only for those $\theta\in \Theta^{\reg}$ satisfying $\theta_i>0$ for $1\leq i\leq n$, i.e.\ for parameters in the interior of $F$ \cite{KonnoHyper,HaradaProudfoot05, GodinhoMandini}, in which case the values of the components $\theta_i$ record the edge-lengths of polygons in the class parametrised by an irreducible component in the core of $X_n(\theta)$. Proposition~\ref{prop:beyondF} extends this observation to all $\theta\in \Theta^{\reg}$, because we have
$\Hyp_n(\theta) \cong \Hyp_n(w(\theta))$ for the unique $w\in \ZZ^n_2$ satisfying $w(\theta)\in F$. In this case, the absolute values $\vert \theta_i\vert = w(\theta)_i$ for $1\leq i\leq n$ record the appropriate edge-lengths. In short, the space $X_n(\theta)$ can legitimately be described as a \emph{hyperpolygon space} for any $\theta\in \Theta^{\reg}$. 

 Theorem~\ref{thm:resolutionsofstar}\two\ establishes the following result.
 
\begin{cor}
\label{cor:count}
 For $n\geq 5$,  the number of projective crepant resolutions of $\Hyp_n(0)$ is equal to the number of chambers in $F$. 
\end{cor}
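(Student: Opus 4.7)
The plan is to deduce the corollary immediately from Theorem~\ref{thm:resolutionsofstar}\two. That result provides, via the isomorphism $L_F$, an explicit bijection between the chambers lying in the cone $F$ and the ample cones $\mathrm{Amp}(X_i/Y)$ of all projective crepant resolutions $X_i \to Y$ of $Y = \Hyp_n(0)$. Hence it is enough to identify the set of projective crepant resolutions of $Y$, taken up to isomorphism over $Y$, with the set of such ample cones inside $N^1(X/Y)$.

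For this identification I would invoke the standard theory of symplectic singularities: since $Y$ is an affine symplectic singularity, any two of its projective crepant resolutions are related by a finite sequence of Mukai flops (indeed, as constructed explicitly along walls of $F$ in the proof of Theorem~\ref{thm:resolutionsofstar}), so any two are small $\Q$-factorial modifications of one another and canonically share the relative Néron--Severi space $N^1(X/Y)$ via strict transform of divisors. The movable cone $\mathrm{Mov}(X/Y)$ then subdivides into a disjoint union of ample cones, one for each such crepant model, and two models agree over $Y$ precisely when their ample cones coincide. The assignment sending a crepant resolution to its ample cone is therefore an injection onto this collection of chambers of $\mathrm{Mov}(X/Y)$, and composing with the inverse of $L_F$ yields a bijection with the set of chambers in $F$.

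The substantive content has already been carried out in Theorem~\ref{thm:resolutionsofstar}: the surjectivity assertion that every projective crepant resolution of $Y$ arises as some $X_n(\theta)$ with $\theta \in F$ is where the quiver-theoretic input does real work. Once that is in hand, no further obstacle remains, and the corollary reduces to a clean counting consequence of the established bijection between chambers and models.
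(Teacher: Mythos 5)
Your proposal is correct and matches the paper exactly: the corollary is stated there as an immediate consequence of Theorem~\ref{thm:resolutionsofstar}\two, with the identification of crepant models with chambers of $\Mov(X/Y)$ resting on the relative Mori Dream Space structure (the paper's Lemma~\ref{lem:BCHM}) just as you describe. No further comment is needed.
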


 Combining this result with Theorems~\ref{t:isom} and \ref{thm:resolutionsofstar} when $n = 5$ recovers all $81$ projective crepant resolutions of $\C^4/G$ that were constructed explicitly by Donten-Bury--Wi\'{s}niewski~\cite{81Coxres}, but here we demonstrate in addition that they are all hyperpolygon spaces. In this case, the hyperplanes in $\mathcal{A}$ are those appearing in \cite[Theorem 4.2.1]{BS-sra}. When $n = 6$, we compute that there are precisely $1684$ resolutions. After the first draft of this paper appeared, Alastair King observed that the sequence recording the number of chambers in $F$ for $n\geq 4$ appears in the OEIS~\cite{OEIS} and, in particular, the number of projective crepant resolutions of $X_n(0)$ for small values of $n$ is as follows (see Remark~\ref{rem:adk}):  
\begin{center}
\begin{tabular}{c||c|c|c|c|c}
    $n$ &  5 & 6 & 7 & 8 & 9 \\ \hline 
    resolution count & 81 & 1,684 &  122,921 & 33,207,256 & 34,448,225,389.
\end{tabular}
\end{center}
 In general, counting the chambers in $F$ for large values of $n$ appears to be computationally difficult.

\subsection{Quotient singularities and Hamiltonian reduction.}
 
It is a classical fact that, for a finite subgroup $\Gamma \subset \SL(2,\C)$, the symplectic quotient singularity $\C^{2n} / (\s_n \wr \Gamma)$ can be realised as a quiver variety, and it was shown in \cite{BC} that all projective crepant resolutions of this quotient are obtained as quiver varieties by variation of GIT quotient. Therefore, it is natural to ask if every symplectic quotient singularity that admits a projective crepant resolution can be realised as a quiver variety (or, more generally, as a Hamiltonian reduction by a connected group). The only known example not already mentioned is the quotient $\C^4/G_4$ that was studied in \cite{Singular} and \cite{LehnSorger}. Though the singularity can be described as a Hamiltonian reduction \cite[Example~5.6]{BLLT}, we do not know of a construction of this quotient as a quiver variety. 

Motivated by the behaviour of the hyperk\"ahler metrics on $X_n(\theta)$ (see below) it is natural to ask, conversely, if $X_{n}(0)$ is isomorphic to a symplectic quotient singularity for some $n > 5$. Our final main result answers this question:

\begin{thm}\label{thm:notisoquotientsing}
For $n > 5$, the space $X_n(0)$ is not isomorphic to a symplectic quotient singularity.
\end{thm}

The proof depends on the (incomplete) classification of symplectic quotient singularities admitting a symplectic resolution. 

\subsection{The hyperk\"ahler geometry of hyperpolygon spaces}
By virtue of the construction of the hyperpolygon space $X_n(\theta)$ as a hyperk\"ahler quotient, in the sense of \cite{HKLR}, it carries a complete hyperk\"ahler metric for any generic choice of $\theta$. Recently, there has been much interest in the asymptotic geometry of such spaces. The resolution of singularities $X_n(\theta) \to X_n(0)$ identifies the singularity $X_n(0)$ with the tangent cone at infinity of the hyperk\"ahler manifold $X_n(\theta)$. The full description of the asymptotic geometry of $X_n(\theta)$ requires precise estimates on the decay of the true hyperk\"ahler metric on $X_n(\theta)$ to the (singular) cone metric on $X_n(0)$.

It is known that $X_{4}(\theta)$ is asymptotically locally Euclidean (ALE). As a further generalisation of ALE, a smooth noncompact hyperk\"ahler manifold is  called \emph{quasi-asymptotically locally Euclidean} (QALE) if its asymptotic geometry is modelled on $\C^{2k}/G$ for a finite subgroup $G$ in $\Sp(k)$. Both ALE and QALE metrics have maximal, that is, Euclidean volume growth. Theorem~\ref{t:isom} shows that the hyperpolygon space $X_{5}(0)$ is the orbifold cone $\CC^4/G$ for the group $G = Q_8 \times_{\ZZ_2} D_8$. For generic $\theta$, we expect that $X_{5}(\theta)$ is quasi-asymptotically locally Euclidean; see Conjecture~\ref{conj:QALE}. 

We prove that such a result does \emph{not} hold for polygons of more than five sides:

\begin{cor}\label{cor:QALEnot}
  For $n > 5$, $X_n(\theta)$ is not quasi-asymptotically locally Euclidean for any $\theta$.  
\end{cor}

We now provide an outline of the paper, making reference to goals (a), (b) and (c) introduced above. Section~\ref{sec:2} provides the combinatorial framework that allows us to study hyperpolygon spaces and the associated GIT wall-and-chamber structure. Section~\ref{sec:theresofhyperpoly} achieves goal (b) by describing the birational geometry of the hyperpolygon spaces $X_n(\theta)$ for $n\geq 5$ (in Theorem~\ref{thm:movable}). In Section~\ref{sec:4}, we specialise to the case $n=5$ to achieve goal (a) (see Theorem~\ref{thm:4foldsing}) by marrying our description of the $\SL_2$-invariant ring (see Proposition~\ref{p:gens-repsl2}) with the Cox ring $\CC[V]^{[G,G]}$ of the quotient singularity $\CC^4/G$ (see Proposition~\ref{p:rels-qt}). Section~\ref{sec:5} achieves goal (c) by showing that $X_n(0)$ is not isomorphic to a symplectic quotient singularity for any $n>5$; the key ingredient here describes the movable cone of a product (see Proposition~\ref{prop:movableproduct}) and hence reduces the problem to the study of irreducible symplectic reflections groups. Finally, Section~\ref{sec:6} outlines the implications of our results for metrics on hyperpolygon spaces, leading to a proof of Corollary~\ref{cor:QALEnot}.

\medskip

\noindent \textbf{Acknowledgements.\ }  We thank Amihay Hanany for pointing out \cite{Mek-physqt} and for multiple discussions, and Alastair King for the observation that forms Remark~\ref{rem:adk}. We also thank the referees for their comments and corrections. SR thanks Hiraku Nakajima and Laura Schaposnik for helpful conversations.  SR and HW thank Laura Fredrickson, Rafe Mazzeo, and Jan Swoboda for useful discussions. GB and AC were partially supported by a Research Project Grant from the Leverhulme Trust. SR acknowledges the support of an Natural Sciences and Engineering Research Council of Canada (NSERC) Discovery Grant.  HW acknowledges the support of the Deutsche Forschungsgemeinschaft (DFG) within SPP 2026 ``Geometry at infinity''.

\section{Hyperpolygon spaces}
\label{sec:2}
In this section we define hyperpolygon spaces as quiver varieties, and we recall some results that hold for any quiver variety. 
We also describe the GIT chamber decomposition for these spaces, and  describe several invariant rings associated to the quiver GIT construction.

\subsection{Hyperpolygon spaces as quiver varieties}
\label{ss:quiver}

The hyperpolygon spaces are the hyperk\"ahler analogues of polygon spaces, first introduced by Konno \cite{KonnoHyper}. 

To construct these spaces, let $n\geq 3$ and let  $Q = Q(n)$ be the star-shaped quiver 
with vertex set
$Q_0 := \{0, 1, \ldots, n\}$ and arrow set
$Q_1 := \{a_1,\ldots, a_n\}$, where for $1\leq i\leq n$, the arrow $a_i$ has tail at $0$ and head at $i$. Let $\dQ$ be the doubled quiver of $Q$, defined by $\dQ_0=Q_0$ and $\dQ_1 = Q_1 \sqcup \{a_1^*, \ldots, a_n^*\}$, where for $1\leq i\leq n$, the arrow $a_i^*$ has tail at $i$ and head at $0$. Consider the dimension vector
$\bv = (2,1,1,\dots,1) \in \NN^{Q_0}$ with $\bv_0=2$
and $\bv_i=1$ for $i \neq 0$. Let $V_0=\CC^2$ and $V_i=\CC$ for $1\leq i\leq n$.
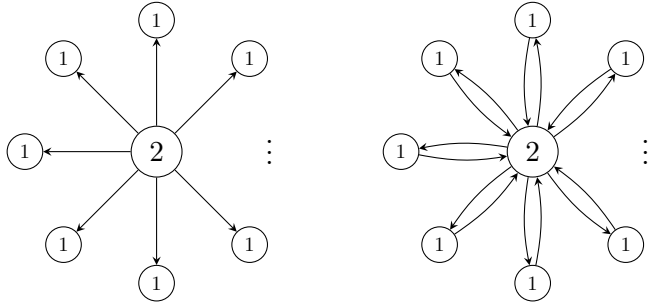
\begin{figure}[ht!]
\begin{tikzpicture}
  \node [circle, draw](0L) at (0,0) {$2$};
  \node [circle, draw](0R) at (5,0) {$2$};
  \foreach \a in {45, 90, ..., 359}
  {
   \node[circle, draw, scale=0.7] (Lv) at ($(0L)+(\a:1.75)$) {$1$};
    \path[-stealth] (0L) edge (Lv);
    \node[circle, draw, scale=0.7] (Rv) at ($(0R)+(\a:1.75)$) {$1$};
    \path[-stealth] (0R) edge[bend right=10] (Rv);
    \path[-stealth] (Rv) edge[bend right=10] (0R);
  };
    \node (Ld) at ($(0L)+(1.5,0.1)$) {$\vdots$};
    \node (Rd) at ($(0R)+(1.5,0.1)$) {$\vdots$};
\end{tikzpicture}
\caption{The quiver $Q$ (left) and the doubled quiver $\overline{Q}$ (right), each with $n+1$ vertices.}
\label{fig:quivers}
\end{figure}

 \noindent The representation space $\Rep(\overline{Q},\bv) := \bigoplus_{i = 1}^n\Hom(V_0, V_i)\oplus \bigoplus_{i = 1}^n\Hom(V_i, V_0)$ admits an action by $\GL_{\bv}:= \GL_2 \times (\CC^\times)^n$, where $g\cdot \big((B_{a_i}),(B_{a_i^*})\big) = \big((g_iB_{a_i}g_0^{-1}), (g_0B_{a_i}g_i^{-1})\big)$ for $g\in \GL_{\bv}$.
 The moment map $\mu\colon \Rep(\overline{Q},\bv) \to
\mathfrak{gl}_{\bv} = \mathfrak{gl}_2 \oplus \mathfrak{gl}_1^n$ for this action takes the form
\[
\mu\big((B_{a_i}),(B_{a_i^*})\big) =  \left(\sum_{j=1}^n B_{a_j^*}B_{a_j}, -B_{a_1}B_{a_1^*}, \dots, -B_{a_n}B_{a_n^*}\right).
\]
Define $\Theta:=\{\theta\in \mathbb{Q}^{n+1} \mid \theta\cdot \bv = 0\}$.  For any $\theta\in \Theta$ and any central $\lambda\in \mathfrak{gl}_{\bv}$, let $\mu^{-1}(\lambda)^{\theta\text{-ss}}$ denote the locus of $\theta$-semistable representations parametrised by points in $\mu^{-1}(\lambda)$. The associated (Nakajima) \emph{quiver variety}~\cite{NakDuke98} is the GIT quotient 
\[
\mathfrak{M}_\lambda(\bv,\theta) = \mu^{-1}(\lambda)\git_\theta \GL_{\bv} := \mu^{-1}(\lambda)^{\theta\text{-ss}}/\GL_{\bv}
\]
 (we use the framing vector $\mathbf{w}=0$). We are particularly interested in the special case $\lambda=0$, and to emphasise the dependence on the number $n$ of external vertices in that case, we write 
 \[
X_n(\theta) := \mathfrak{M}_0(\bv,\theta) =  \mu^{-1}(0)^{\theta\text{-ss}}/\GL_{\bv}
\]
 for the resulting GIT quotient. It is easy to see that the variety $X_n(\theta)$ contains the GIT quotient $M_n(\theta):= \Rep(Q,\bv)\git_\theta \GL_{\bv}$ that is a \emph{polygon space} \cite{HausmannKnutson} for parameters $\theta$ satisfying $\theta_i>0$ for $i>0$. See \cite{KonnoHyper,HaradaProudfoot05, GodinhoMandini,FisherRayan,RayanSchaposnik} 
 for more on polygon and hyperpolygon spaces.

\subsection{Combinatorics of star shaped root systems}
Let $e_0, \dots, e_n$ denote the standard basis of $\RR^{Q_0}$, so $e_i \in \NN^{Q_0}$. The Cartan form $(-,-): \ZZ^{Q_0} \times \ZZ^{Q_0} \to \ZZ$ 
for the quiver $\overline{Q}$ is given by
$$
(\alpha,\beta) = 2 \sum_{i = 0}^n \alpha_i \beta_i - \sum_{j = 1}^n (\alpha_0 \beta_j + \alpha_j \beta_0) 
$$
for $\alpha=\alpha_0e_0+\cdots + \alpha_ne_n$ and $\beta=\beta_0e_0+\cdots + \beta_ne_n$. Define
$$
p(\alpha) = 1 - \frac{1}{2} (\alpha, \alpha) = 1 - \sum_{i = 0}^n \alpha_i^2 + \sum_{j = 1}^n \alpha_0 \alpha_j.
$$

For $i \in Q_0$, the simple reflection $s_i\colon \ZZ^{Q_0} \to \ZZ^{Q_0}$ is the linear map
given by $s_i(\alpha) := \alpha - (\alpha, e_i) e_i$.
The \emph{real roots} are the elements of $\NN^{Q_0}$ obtainable from the $e_i$ by applying arbitrary sequences of simple reflections. A vector $\alpha \in \NN^{Q_0}$ is said to be in the \emph{fundamental region} if $(\alpha, e_i) \leq 0$ for all $i$ and the support (= the collection of vertices $i \in Q_0$ such that $\alpha_i \neq 0$) is connected.  The (positive) \emph{imaginary roots} are the elements of $\NN^{Q_0}$ obtainable from elements of the fundamental region by applying arbitrary sequences of simple reflections. An imaginary root $\alpha$ is \textit{isotropic} if $(\alpha,\alpha) = 0$ and \textit{anisotropic} if $(\alpha,\alpha) < 0$. 

\begin{defn}
	Let $R_{\lambda,\theta}^+$
    denote the set of all positive roots $\alpha$
	such that $\alpha \cdot \lambda = 0 = \alpha \cdot
	\theta$. The set $\Sigma_{\lambda,\theta}$ consists of all
	roots $\alpha \in R_{\lambda,\theta}^+$ such that, for every proper
	decomposition $\alpha = \alpha^{(1)} + \cdots + \alpha^{(m)}$
	with $\alpha^{(i)} \in R_{\lambda,\theta}^+$, we have
	$p(\alpha) > p(\alpha^{(1)}) + \cdots + p(\alpha^{(m)})$.
\end{defn}
In terms of representation theory, the set $\Sigma_{\lambda,\theta}$ consists of the dimension vectors of $\theta$-stable representations of the deformed preprojective algebra $\Pi^{\lambda}$ \cite{BellSchedQuiver,CrawleyBoeveyHolland}.

Recall that $\bv = (2,1,1,\ds,1)$. The simple roots $e_i$ are all, by definition, real. Let $\bv^{(i)} = \bv - e_i$ for $i = 1, \ds, n$. We note that $(\bv,e_0) = 4 - n$ and $(\bv,e_i) = 0$ for $i > 0$, which implies that $\bv$ is in the fundamental region when $n \ge 4$. Moreover, it is an anisotropic root when $n \ge 5$ since $p(\bv) = n - 3$. We also note that $p(\bv - e_0) = p(\bv) + (\bv,e_0) - 1 = 0$.

\begin{lem}\label{lem:ve0realroot}
	The dimension vector $\bv - e_0$ is a real root.
\end{lem}

\begin{proof}
	This is by induction on $n$, and it is convenient to allow any  $n\geq 0$. When $n = 0$, $\bv - e_0= e_0$ is a (real) root. Assume $n > 0$. Notice that for any $i > 0$, in particular for $i = n$, we have $s_i(\bv - e_0) = \bv - e_{0} - e_i$ since $(\bv-e_0,e_i) = 1$. If we define $\bv_n = \bv - e_0$ then this shows that $s_n(\bv_n) = \bv_{n-1}$. By induction, $\bv_{n-1}$ is a real root, so $\bv_n$ is a real root.   
\end{proof}

For each subset $I \subset [1,n]$ define $\bv^I = \bv - \sum_{i \in I} e_i$.  

\begin{lem}\label{lem:rootsstar}
	\begin{enumerate}
		\item[\one] A dimension vector $\alpha \le \bv$ is a root of $Q$ if and only if it has connected support and is not equal to $\bv^I$ for $|I| \ge n-2$. 
		\item[\two] A positive root $\alpha \le \bv$ is (a) anisotropic if $\alpha=\bv^I$ with $n - |I| > 4$ (b) isotropic if $\alpha=\bv^I$ with $n - |I| = 4$ (c) real otherwise.   
	\end{enumerate}
\end{lem}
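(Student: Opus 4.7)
My plan is to argue by case analysis on the only non-trivial component, $\alpha_0 \in \{0,1,2\}$, of any $\alpha \le \bv = (2,1,\ldots,1)$, since the remaining entries are forced to lie in $\{0,1\}$. Throughout, I will use Kac's classification of positive roots: each is either real (in the Weyl-group orbit of some simple root $e_i$, with $p(\alpha) = 0$) or positive imaginary (Weyl-equivalent to a connected-support element of the fundamental region, with $p(\alpha) \ge 1$). In particular, $p(\alpha) < 0$ precludes being a root; this single observation will dispatch all the non-root cases in \one.

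First I would dispose of the easy cases. If $\alpha_0 = 0$, then $\alpha = \sum_{i \in J} e_i$; since the external vertices are pairwise disconnected in $Q$, the support $J$ is connected if and only if $|J| = 1$, recovering the simple roots $e_i$ (which are real, and never of the form $\bv^I$). If $\alpha_0 = 1$, then $\alpha = e_0 + \sum_{i \in J} e_i$ has connected support automatically, and the computation $(\alpha, e_j) = 1$ for $j \in J$ lets a short induction produce $s_{j_1}\cdots s_{j_m}(\alpha) = e_0$. Hence $\alpha$ lies in the Weyl orbit of $e_0$ and is a real root, slightly generalising Lemma~\ref{lem:ve0realroot}.

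The substantive case is $\alpha_0 = 2$, in which $\alpha = \bv^I$ for some $I \subseteq [1,n]$ and the support always contains the central vertex $0$, hence is connected. Direct bilinear-form calculations yield $p(\bv^I) = n - |I| - 3$, together with $(\bv^I, e_0) = 4 - (n - |I|)$, $(\bv^I, e_i) = 0$ for $i \notin I$, and $(\bv^I, e_i) = -2$ for $i \in I$. When $|I| \le n-4$, all pairings with simple roots are non-positive, so $\bv^I$ lies in the fundamental region and is imaginary: isotropic precisely when $|I| = n-4$ (where $p = 1$) and anisotropic otherwise (where $p \ge 2$). When $|I| = n - 3$, we have $p(\bv^I) = 0$ and $(\bv^I, e_0) = 1$, so $s_0(\bv^I) = e_0 + \sum_{i \notin I} e_i$ reduces to the preceding $\alpha_0 = 1$ case, proving $\bv^I$ real. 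When $|I| \ge n-2$, one finds $p(\bv^I) \le -1$, ruling out the root property.

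The main (modest) obstacle is the last range: one must be confident that $\bv^I$ is not merely a non-fundamental, non-simple-looking candidate but genuinely absent from the root system. This is exactly where the Weyl-invariance of $p$ combined with the Kac dichotomy is essential — every positive root satisfies $p \ge 0$, so the computed $p < 0$ is definitive. Assembling the three cases then yields \one\ and \two\ simultaneously.
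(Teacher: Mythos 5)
Your proposal is correct and follows essentially the same route as the paper: decompose the candidates $\alpha \le \bv$ with connected support into $e_j$, $\bv^I - e_0$ and $\bv^I$, compute $p(\bv^I) = n - |I| - 3$, exclude the case $|I| \ge n-2$ via $p < 0$, and classify the rest using the fundamental region and the reflection argument of Lemma~\ref{lem:ve0realroot}. Your write-up merely makes explicit some steps the paper leaves implicit (the fundamental-region check for $\bv^I$ with $|I| \le n-4$ and the reduction of the $|I| = n-3$ case via $s_0$), which is fine.
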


\begin{proof}
	Part \one. Every dimension vector $\alpha \le \bv$ with connected support is of the form $\bv^I$ or $\bv^I - e_0$, for $I \subset [1,n]$, or $e_j$ for some $j > 0$. We have $p(\bv^I) = n - |I| - 3$. Therefore, the dimension vectors $\bv^I$, where $|I| \ge n-2$, cannot be roots because $p(\bv^I) < 0$ in this case.   
	
	We have explained previously that $\bv^I$ for $|I| < n-2$ is a root, and Lemma~\ref{lem:ve0realroot} implies that $\bv^I - e_0$ is always a real root. Similarly, the $e_j$ are clearly roots.  
	
	Part \two\ follows from the above computations. 
\end{proof}

If $\Sigma_{\theta,\lambda}(\bv) = \{ \alpha \in \Sigma_{\theta,\lambda} \mid \alpha \le \bv \}$, then we deduce from Lemma~\ref{lem:rootsstar} that 
\begin{equation}\label{eq:singma0bv}
    \Sigma_0(\bv) = \{ e_0, \ds, e_n, \bv \} \cup \{ \bv^I \, | \, n - |I| \ge 4 \}.
\end{equation}

\begin{lem}\label{lem:stablegenericrephyperpoly}
For any $\theta \in \Theta$ and $\lambda$ with $\lambda \cdot \bv = 0$, we have that:
\begin{enumerate}
    \item[\one] $\bv\in \Sigma_{\theta,\lambda}$, and hence there exists a $\theta$-stable $\Pi^{\lambda}$-representation of dimension vector $\bv$; and 
    \item[\two] the morphism $\mf{M}_\lambda(\bv,\theta) \to \mf{M}_{\lambda}(\bv,0)$ is a projective, birational Poisson morphism.
\end{enumerate}
In particular, $\Hyp_n(\theta)$ is an irreducible variety of dimension $2p(\bv) = 2n - 6$.
\end{lem}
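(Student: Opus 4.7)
The plan is to reduce everything to showing that $\bv \in \Sigma_{\theta,\lambda}$ (assuming $n \ge 4$; the case $n=3$ produces a $0$-dimensional variety and is handled separately). Once this is established, (i) is precisely the representation-theoretic characterisation of $\Sigma_{\theta,\lambda}$ cited just after its definition, while the irreducibility and dimension claims reduce to standard results on quiver varieties, e.g.\ those of Crawley-Boevey. Membership of $\bv$ in $R_{\theta,\lambda}^+$ is automatic: Lemma~\ref{lem:rootsstar} gives that $\bv$ is a positive root, $\bv\cdot\theta = 0$ by the definition of $\Theta$, and $\bv\cdot\lambda = 0$ by hypothesis.

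The substantive step is the strict inequality $p(\bv) > \sum_i p(\alpha^{(i)})$ for every proper decomposition $\bv = \sum_{i=1}^m \alpha^{(i)}$ into positive roots $\alpha^{(i)} \in R^+_{\theta,\lambda}$. Here I would exploit Lemma~\ref{lem:rootsstar}: the only imaginary roots $\alpha \le \bv$ are those of the form $\bv^I$ with $n-|I|\ge 4$, and all satisfy $\alpha_0 = 2 = \bv_0$. Hence at most one summand can be imaginary. If all summands are real, then $\sum_i p(\alpha^{(i)}) = 0 < n-3 = p(\bv)$. If exactly one, say $\alpha^{(1)} = \bv^I$, is imaginary, then the remaining summands are positive roots supported on $\{1,\dots,n\}$ adding to $\sum_{i\in I}e_i$; the only such roots are the simples $e_i$, so $\sum_i p(\alpha^{(i)}) = p(\bv^I) = n-|I|-3 < n-3$, since $|I|\ge 1$ (else $m=1$, contrary to properness). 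Either way the inequality is strict, establishing $\bv\in\Sigma_{\theta,\lambda}$ and hence (i).

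For part (ii), the morphism $\mf{M}_\lambda(\bv,\theta)\to\mf{M}_\lambda(\bv,0)$ is projective by standard GIT: the target is $\Spec$ of the $\GL_\bv$-invariant ring on $\mu^{-1}(\lambda)$, while the source is $\Proj$ of a graded algebra over it. It is Poisson because both varieties arise from Hamiltonian reduction of $\mu^{-1}(\lambda)$ by $\GL_\bv$, with only the choice of linearisation differing. For birationality, the $\theta$-stable representation supplied by (i) is in particular simple, so the locus of simple $\Pi^\lambda$-representations in $\mu^{-1}(\lambda)$ is nonempty; by Crawley-Boevey's irreducibility theorem for $\mu^{-1}(\lambda)$ (applicable since $\bv$ is a root), this locus is open and dense, and on its image the two GIT quotients coincide, yielding an isomorphism between open dense subvarieties.

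The concluding claim that $\Hyp_n(\theta)$ is irreducible of dimension $2p(\bv) = 2n-6$ is then a direct application of Crawley-Boevey's dimension and irreducibility results for quiver varieties, which apply because $\bv \in \Sigma_{\theta,0}$ (verified by exactly the same argument as above, now with $\lambda = 0$). The main obstacle is really just the combinatorial case analysis in the second paragraph; everything else is standard citation, and the analysis is tightly controlled by the constraint $\bv_0 = 2$ together with the explicit root list from Lemma~\ref{lem:rootsstar}.
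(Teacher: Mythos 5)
Your proposal follows the same route as the paper: everything is reduced to the membership $\bv\in\Sigma_{\theta,\lambda}$, with the geometric statements then delegated to the standard results of Crawley--Boevey et al. The paper simply asserts $\bv\in\Sigma_0$ by pointing at \eqref{eq:singma0bv}, whereas your second paragraph actually carries out the verification; the case analysis is correct and is exactly the argument implicit in the paper, hinging on the observation from Lemma~\ref{lem:rootsstar} that every imaginary root $\leq\bv$ has $0$-component equal to $2$, so a proper decomposition contains at most one imaginary summand, and on the fact that real roots have $p=0$. Your restriction to $n\geq 4$ is also right: for $n=3$ the decomposition $\bv=(\bv-e_0)+e_0$ into real roots kills the strict inequality, so the lemma is implicitly stated for $n\geq 4$.

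There is one genuine, though easily repaired, error in the birationality step: you assert that the $\theta$-stable representation supplied by \one\ is ``in particular simple''. The implication goes the wrong way. A $\theta$-stable module may have proper nonzero submodules $N$, as long as $\theta\cdot\dim N>0$; it is simple modules that are automatically $\theta$-stable for every $\theta$, not conversely. The correct argument is that your combinatorial computation, run with $\theta=0$, gives $\bv\in\Sigma_{0,\lambda}=\Sigma_\lambda$, which by Crawley--Boevey is equivalent to the existence of a \emph{simple} $\Pi^{\lambda}$-module of dimension $\bv$; the simple locus is then open and dense in the irreducible variety $\mu^{-1}(\lambda)$, and over its image the two GIT quotients agree. (Relatedly, Crawley--Boevey's irreducibility of $\mu^{-1}(\lambda)$ requires $\bv\in\Sigma_\lambda$, not merely that $\bv$ be a positive root, so the parenthetical ``applicable since $\bv$ is a root'' should be strengthened accordingly --- again using what you have already proved.) With these two phrases corrected, the proof is complete and matches the paper's.
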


\begin{proof}
Since $\bv \in \Sigma_0$, it follows that $\bv\in \Sigma_{\theta,\lambda}$ for all $\theta \in \Theta$ and $\lambda$ with $\lambda \cdot \bv = 0$. This means that the canonical decomposition of $\bv$ is $(\bv,1)$ for all $\theta, \lambda$; equivalently, for any $\theta\in \Theta$ there exists a $\theta$-stable $\Pi^{\lambda}$-representation of dimension vector $\bv$. Part (ii) follow from part (i) by \cite[Lemma~2.4]{BellSchedQuiver} and the proof of \cite[Proposition~8.6]{CBnormal}. Finally, since $\bv \in \Sigma_{\theta,0}$, the results  \cite[Theorem~1.2, Lemma~3.21]{BellSchedQuiver} imply that $\Hyp_n(\theta)$ is an irreducible variety of dimension $2p(\bv) = 2n - 6$.
\end{proof}

\subsection{Symplectic leaves}\label{sec:sympleaves}

Since the hyperpolygon space is constructed by Hamiltonian reduction, it is a Poisson variety. In fact, by \cite[Theorem~1.2]{BellSchedQuiver}, it is an example of a \emph{symplectic singularity} in the sense of Beauville \cite{Beauville}. This means that it is a normal variety equipped with a symplectic structure on the smooth locus, whose pullback to any resolution of singularities extends to a regular (globally-defined) two-form.   This condition implies that it has a finite stratification by symplectic leaves \cite{Kaledinsympsingularities}, which are smooth locally closed subvarieties.   Since $Y$ is a cone and its generic symplectic form has positive weight for the contracting $\CC^\times$ action, it is moreover a \emph{conical symplectic singularity}.

By \cite[Proposition~3.6]{BellSchedQuiver}, the symplectic leaves of $\mathfrak{M}_\lambda(\bv,\theta)$ are labelled by the representation types $\tau$ of $\bv$. These are tuples $\tau = (\beta^{(1)},n_1; \ds ; \beta^{(k)},n_k)$, where $\beta^{(i)} \in \Sigma_{\theta,\lambda}, n_i \in \N$ and $\bv = \sum_{i = 1}^k n_i \beta^{(i)}$. Below, we consider the case $\lambda=0$ and $\theta = 0$. For $I \subset [1,n]$ with $|I| < n-2$, let $\mc{L}_I$ be the leaf of type $(\bv^I,1; e_j , 1 : j \in I)$.

\begin{lem}\label{lem:sympleavesH0}
    \begin{enumerate}
        \item[\one] The symplectic leaves 
        of $\Hyp_n(0)$ comprise 
    $\mc{L}_I$ for all subsets 
    $I \subset [1\ds n]$ satisfying 
    $\vert I\vert \leq n-4$,
    together with the leaf $
    \{ 0 \}$. The codimension of $\mc{L}_I$ is $2|I|$. 
        \item[\two] \'Etale locally, a slice to the leaf $\mc{L}_I$ is given by $(\C^2 / \Z_2)^{\times |I|}$. 
    \end{enumerate}
\end{lem}

\begin{proof}
Part (i). It follows from the description of $\Sigma_0(\bv)$ in \eqref{eq:singma0bv} that the representations types are $(\bv,1)$ (the open leaf $\mc{L}_{\emptyset}$), $(\bv^I,1;e_j,1 \colon j \notin I)$ (the codimension $2 |I|$ leaves $\mc{L}_{I}$) and $(e_0,2;e_1,1; \ds )$ (the zero-dimensional leaf). 

Part (ii). Let $\ell := n - 3 - |I|$. Let $Q'$ be the quiver with vertices $f_0$ and $f_j$ for $j \notin I$, two arrows from $f_j$ to $f_0$ and $\ell$ loops at $f_0$. We let $\alpha$ be the dimension vector $(1,1,\ds,1)$ for $Q'$. Since $(\bv^I,e_j) = -2$ and $p(\bv^I) = \ell$, \cite[Corollary~4.10]{CBnormal} says that $\Hyp_n(0)$ is \'etale locally isomorphic at $x \in \mc{L}_I$ to $0$ in the quiver variety $\mf{M}_0(\alpha,0)$ associated to $Q'$. Since $\alpha_0 = 1$, the action of $\GL_\alpha$ factors through $(\CC^\times)^{|I|}$ acting at the external vertices. As a result the quotient factors as a product of  quiver varieties: one for the quiver having only one vertex with $\ell$ loops and dimension $(1)$, and $|I|$ quivers having two vertices and two arrows (in the same direction), with dimension $(1,1)$. This implies that $\mf{M}_0(\alpha,0)$ is isomorphic to $\C^{2\ell} \times (\C^2 / \Z_2)^{\times |I|}$. 
\end{proof}

Lemma~\ref{lem:sympleavesH0} says that the codimension two leaves in $\Hyp_n(0)$ are labelled by the representation type $(\bv^{(i)},1;e_i,1)$; the leaf is $\mc{L}_{ \{ i \} }$. The singularity transverse to the leaf $\mc{L}_{\{ i\} }$ is $\C^2 / \Z_2$. 

\begin{rem}
The closure $\overline{\mc{L}}_I$ is isomorphic to the affine hyperpolygon space $\Hyp_m(0)$, where $m = n - |I|$. This is clear set-theoretically, but can also be shown to hold on the level of Poisson schemes using Proposition~\ref{p:gens-repsl2}. The details are left to the interested reader. 
\end{rem}

\subsection{The hyperplane arrangement}
Identify $\Theta = \{ \theta \in \Q^{n+1} \mid \theta \cdot \bv = 0 \}$ with $\Q^n$ by projection away from the $\theta_0$ component. As in the introduction, consider the hyperplane arrangement
\begin{equation}
\label{eq:hyperpolygonarrangementintro}
   \mc{A} = 
   \big\{ L_i, H_I \mid 1\leq i\leq n, \{1\}\subseteq I\subseteq \{1,\dots,n\}\big\},
\end{equation}
where $L_i = \{ \theta \in \Theta \mid \theta_i = 0\}$ is a coordinate hyperplane and $H_I= \{ \theta \in \Theta \mid \sum_{i \in I} \theta_i = \sum_{j \notin I} \theta_j\}$. It is sometimes convenient to identify $\mathcal{A}$ with the subset of $\mathbb{Q}^n$ comprising the set of points lying in the union of all hyperplanes in $\mathcal{A}$, and we do so without comment from now on. 

\begin{prop}
\label{prop:Hnthetasmooth}
	Let $\theta\in \Theta$. The following statements are equivalent:
	\begin{enumerate}
	    \item[\one] the space  $X_n(\theta)$ is non-singular;
	    \item[\two] every $\theta$-semistable $\Pi$-module of dimension vector $\bv$ is $\theta$-stable; and
	    \item[\three] $\theta$ does not lie in $\mc{A}$.
	\end{enumerate}
	We say $\theta$ is \emph{generic} if it satisfies one, and hence any, of these conditions.
\end{prop}  
\begin{proof}
Since $\bv$ belongs to $\Sigma_0$, the equivalence of \one \ and \two \ follows from \cite[Theorem~1.15]{BellSchedQuiver}, which says that in this case a closed point is singular if and only if it corresponds to a non-stable point. 

Note that, if there is a semistable representation which is not stable, then there is also a polystable one (by taking a filtration of semistable representations and taking the associated graded representation).  This exists if and only if there is a proper decomposition of $\bv$ into a sum of vectors from $\Sigma_{\theta}$. We must show that this is true if and only if $\theta \in \mc{A}$. Assume that $\bv = \beta^{(1)} + \cdots + \beta^{(r)}$ with $r > 1$ and $\beta^{(i)} \in \Sigma_{\theta}$. If there exists $\beta^{(i)}$ with $\beta^{(i)}_0 = 2$ then the other $\beta^{(j)}$ are all of the form $e_k$ for some $k \ge 1$ since their support is connected. In particular, there exists a $k$ such that $\theta \cdot e_k = 0$, giving $\theta \in L_k$. 

Otherwise, we assume that $\beta^{(1)}_0 = \beta^{(2)}_0 = 1$. Then all other $\beta^{(j)}$ are of the go  form $e_k$ for some $k\geq 1$, again since their support is connected. In this case, we can write $\bv = \beta^{(1)} + (\beta^{(2)} + \cdots + \beta^{(r)})$. Then both $\alpha := \beta^{(1)}$ and $\beta := (\beta^{(2)} + \cdots + \beta^{(r)})$ are real roots by Lemma~\ref{lem:rootsstar} since $\beta$ will be connected. The hyperplane in $\Theta$ corresponding to $\theta \cdot \alpha = \theta \cdot \beta = 0$ is of the form $H_I$, for some $I$. Thus, $\theta \in \mc{A}$.

Conversely, if $\theta \in \mc{A}$ then one can check, by considering the cases $\theta \in L_k$ and $\theta \in H_I$ separately, that $\bv = \alpha^{(1)} + \cdots+ \alpha^{(j)}$ admits a decomposition into a (proper) sum of positive roots. Each $\alpha^{(i)}$ can, by definition, be written as a sum of roots in $\Sigma_{\theta}$. It follows that $\bv$ can be decomposed into a proper sum of roots in $\Sigma_{\theta}$. By \cite[Theorem~1.3]{BellSchedQuiver}, this implies that there exists a $\theta$-semistable $\Pi$-module of dimension vector $\bv$ that is not $\theta$-stable.
\end{proof}

\begin{remark}
Proposition~\ref{prop:Hnthetasmooth} shows that the hyperplane arrangement $\mathcal{A}$ determines the GIT chamber decomposition of $\Theta$. Nakajima~\cite[(2.7)]{Nak1994} also introduces a chamber decomposition which, for $\lambda=0$, is determined by the arrangement comprising hyperplanes $\{\theta\in \mathbb{Q}^{n+1} \mid \theta\cdot \bv=\theta\cdot u =0\}$
associated to all vectors $u\in \ZZ^{n+1}$ satisfying $0 < u < \bv$ and $(u,u)\leq 2$. Each hyperplane from $\mathcal{A}$ appears in this arrangement, so the chamber decomposition of Nakajima refines the GIT chamber decomposition. However, these chamber decompositions do not coincide since, for example,  the hyperplane of vectors perpendicular to $u=(2,1,\dots,1,0,0)$ does not lie in $\mathcal{A}$.
\end{remark}

Lemma~\ref{lem:stablegenericrephyperpoly} and Proposition~\ref{prop:Hnthetasmooth} together imply the following result; see \cite{Beauville} for the definition of symplectic resolution, noting here that it is equivalent to the morphism being crepant. 

\begin{cor}\label{cor:symplecticreshyperpoly}
The morphism $f_\theta\colon X_n(\theta)
\rightarrow
X_n(0)$ obtained by variation of GIT quotient is a projective, symplectic resolution if and only if $\theta\in \Theta^{\reg}$.
\end{cor}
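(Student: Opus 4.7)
The proof combines the two previous results with the symplectic singularity structure of the target. My plan is as follows.

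First, recall from Lemma~\ref{lem:stablegenericrephyperpoly}\two\ that for \emph{every} $\theta \in \Theta$, the morphism $f_\theta \colon X_n(\theta) \to X_n(0)$ is projective, birational and Poisson. Recall also from Proposition~\ref{prop:Hnthetasmooth} that $X_n(\theta)$ is smooth if and only if $\theta \in \Theta^{\reg}$. The converse direction of the corollary is then immediate: if $\theta \notin \Theta^{\reg}$ then $X_n(\theta)$ is singular, so $f_\theta$ cannot be a resolution of singularities, let alone a symplectic one.

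For the forward direction, suppose $\theta \in \Theta^{\reg}$. Then $f_\theta$ is a projective birational morphism from the smooth variety $X_n(\theta)$ to $X_n(0)$, hence a projective resolution of singularities. To promote this to a symplectic resolution, I will invoke the fact, recorded in Section~\ref{sec:sympleaves} (following \cite[Theorem~1.2]{BellSchedQuiver}), that $X_n(0)$ is a conical symplectic singularity. This provides a canonical symplectic $2$-form $\omega_0$ on the smooth locus $X_n(0)^{\sm}$, coming from the Poisson structure of maximal rank on the open leaf. Since $f_\theta$ is birational, there is a $\GL_{\bv}$-invariant open subset $U \subseteq \mu^{-1}(0)^{\theta\text{-ss}}$ mapping isomorphically to an open subset of $X_n(0)^{\sm}$, so $f_\theta^*\omega_0$ is a symplectic form on a dense open subvariety of $X_n(\theta)$.

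The step requiring care is extending $f_\theta^*\omega_0$ to a symplectic form on all of $X_n(\theta)$. Here the Poisson property of $f_\theta$ is essential: it forces the Poisson bivector on $X_n(\theta)$ (constructed from the Hamiltonian reduction) to restrict, under the isomorphism with the generic leaf of $X_n(0)$, to the Poisson bivector induced by $\omega_0$. Since $\dim X_n(\theta) = 2p(\bv) = 2n - 6 = \dim X_n(0)$ by Lemma~\ref{lem:stablegenericrephyperpoly}, and non-degeneracy of the Poisson bivector is an open condition on a smooth variety of fixed even dimension, non-degeneracy on the dense open subset $U/\GL_{\bv}$ propagates: the global Poisson bivector on the smooth variety $X_n(\theta)$ is everywhere of maximal rank, hence defines a global symplectic $2$-form extending $f_\theta^*\omega_0$. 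This is exactly the content of $f_\theta$ being a symplectic (equivalently, crepant) resolution in the sense of Beauville \cite{Beauville}.

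The main obstacle, as just described, is verifying the extension of $f_\theta^*\omega_0$ across the exceptional locus. In principle one could alternatively cite the general fact that any projective birational Poisson morphism from a smooth variety to a symplectic singularity of the same dimension is automatically a symplectic resolution; but the argument above is self-contained and only uses the results of the present section together with the symplectic singularity property of $X_n(0)$.
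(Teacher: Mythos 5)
Your overall strategy matches the paper's (the paper gives no written proof, simply asserting that Lemma~\ref{lem:stablegenericrephyperpoly} and Proposition~\ref{prop:Hnthetasmooth} together imply the result): the converse direction and the reduction of the forward direction to ``the extended $2$-form on $X_n(\theta)$ is non-degenerate everywhere'' are both fine. However, the step you yourself flag as requiring care contains a genuine gap. You argue that because non-degeneracy of the Poisson bivector is an open condition on the smooth variety $X_n(\theta)$, non-degeneracy on a dense open subset propagates to all of $X_n(\theta)$. Openness only tells you that the non-degeneracy locus is open; it does not force a dense open subset to be everything. A Poisson bivector on a smooth variety can perfectly well be generically symplectic yet degenerate along a divisor (e.g.\ $x\,\partial_x\wedge\partial_y$ on $\C^2$), so some input specific to the situation is needed to rule this out, and your argument as written supplies none.

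The missing input is available in the section you are quoting from. For $\theta\in\Theta^{\reg}$, Proposition~\ref{prop:Hnthetasmooth}\two\ says that every $\theta$-semistable representation is $\theta$-stable; hence the only representation type of $\bv$ is $(\bv,1)$, and by the discussion in Section~\ref{sec:sympleaves} (i.e.\ \cite[Proposition~3.6]{BellSchedQuiver}) the variety $X_n(\theta)$ has a single symplectic leaf, which must therefore be all of the smooth variety, so the Poisson bivector has maximal rank everywhere. Equivalently, on the stable locus the group $\GL_{\bv}$ acts freely modulo its global stabiliser $\C^\times$, so $\mu^{-1}(0)^{\theta\text{-ss}}/\GL_{\bv}$ is a Marsden--Weinstein reduction and carries a global symplectic form. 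Alternatively, one can apply \cite[Theorem~1.2]{BellSchedQuiver} to $X_n(\theta)$ itself, rather than only to $X_n(0)$: a smooth variety with symplectic singularities is symplectic. With any of these in hand, the Poisson property of $f_\theta$ from Lemma~\ref{lem:stablegenericrephyperpoly}\two\ identifies this symplectic form over the generic locus with $f_\theta^*\omega_0$, and the rest of your argument goes through.
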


\begin{prop}
	For all $\lambda$, we have that:
	\begin{enumerate}
	    \item[\one] $\mu^{-1}(\lambda)$ is a reduced, irreducible, complete intersection; and
	    \item[\two] $\mu^{-1}(\lambda) \git \, \SL_2$ is reduced and irreducible.
	\end{enumerate}

\end{prop}

\begin{proof} 
Since $\bv$ belongs to $\Sigma_{\lambda,0}$, part \one \ is due to Crawley-Boevey \cite{CBmomap}. Part \two\ follows from \one. 
\end{proof}

 \subsection{The semi-invariant ring}
We now introduce a presentation of the ring $\CC[\mu^{-1}(\lambda)]^{\SL_2}$. We refer to Figure~\ref{fig:quivers} for notation of the quiver. Given a vertex $i$, let $V_i = \CC^{\bv_i}$ be the vector space at vertex $i$.
Given an arrow $a$, let $\ev_a: \Rep(\dQ,\bv) \to \Hom(V_{a_s}, V_{a_t})$ be the corresponding function. We fix a symplectic form on $V_0 = \CC^2$ and view $\SL_2 = \Sp(V_0)$. This gives an isomorphism $V_0 \cong V_0^*$ which is $\SL_2$-equivariant. We also fix a trivialisation $V_i \cong \CC$ for each $i$. Put together we can view $\ev_a$ as valued in $V_0$ for every arrow $a$. We can therefore define the $\SL_2$-invariant functions $\varphi_{a,b} := \ev_a \wedge \ev_b: \Rep(\dQ,\bv) \to \CC$.  Note that $\varphi_{a,b}=-\varphi_{b,a}$. 
\begin{prop}\label{p:gens-repsl2}
	\begin{enumerate}
		\item[\one] The ring $\CC[\Rep(\dQ,\bv)]^{\SL_2}$ is generated by the $\varphi_{a,b}$, subject to the relations $\varphi_{a,b} \varphi_{c,d} - \varphi_{a,c} \varphi_{b,d} + \varphi_{a,d} \varphi_{b,c} = 0$ (and $\varphi_{a,b}=-\varphi_{b,a}$).
		\item[\two] The ring $\CC[\mu^{-1}(\lambda)]^{\SL_2}$ is the quotient of $\CC[\Rep(\dQ,\bv)]^{\SL_2}$ by the ideal generated by the functions 
  $\varphi_{a_i,a_i^*}-\lambda_i$ for $1\leq i\leq n$ and $
		\sum_i \varphi_{a_i,b} \varphi_{a_i^*, c} - \lambda_0 \varphi_{b,c}$ for all $b,c \in \dQ_1$.
		\item[\three] The subring $\CC[\mu^{-1}(\lambda)]^{\GL_{\bv}}$ is spanned by those monomials in $\varphi_{a,b}$ in which the total number of occurrences of the index $a_i$ is the same as of $a_i^*$ for all $i$.
	\end{enumerate}
\end{prop}
\begin{proof} 
	For \one, generation is formally a direct consequence of
 first fundamental theorem of invariant theory for $\Sp(V_0)=\SL_2$ by Weyl~\cite{Weyl-ftit}. Indeed, applying this theorem as in \cite[Theorem~8.4]{KraftProcesi} with $p=2n$, $q=0$ and $n=2$ shows that $\CC[\Rep(\dQ,\bv)]^{\SL_2}$ is generated by the determinants $\varphi_{a,b}$ (denoted $[\ev_a,\ev_b]$ in \emph{loc.\ cit.}). The relations are a direct consequence of the second theorem of invariant theory \cite{Weyl-ftit}. Equivalently, the relations can be interpreted as the Pl\"{u}cker relations between the determinants $\varphi_{a,b}$ listed above, or as diagrammatic Pfaffians as in \cite[Theorem 3.7]{Gav}; see also \cite[Theorem 4.12]{Hu-Xiao-BMW} for an ideal generating set.
	
	For \two, the first set of equations is precisely the set defined by the moment map at the external vertices. For the second, the moment map condition at the internal vertex says that $\sum_i \ev_{a_i^*} \otimes \ev_{a_i}$ is the constant function valued at $\lambda_0 \Id_{V_0} \in V_0 \otimes V_0^*$, where we use the symplectic form $\omega$ to identify $\ev_{a_i}$ with $\omega(\ev_{a_i},-)\in V_0^*$ (otherwise replace $\Id_{V_0}$ by the inverse of the symplectic form in $V_0 \otimes V_0$). We pair this element with $\ev_c \otimes \ev_b$ using the symplectic form $\omega$ on the left-hand factor and the natural pairing of $V_0$ and its dual on the right-hand factor to obtain
 \begin{equation}
 \label{eqn:pairings}
\sum_i \omega(\ev_{a_i^*}, \ev_c) \cdot \omega(\ev_{a_i},\ev_b) = \sum_i \varphi_{a_i^*,c} \cdot \varphi_{a_i,b}\quad \text{ and }\quad \lambda_0 \omega(\ev_b,\ev_c) = \lambda_0 \varphi_{b,c},
 \end{equation}
where in the second equality, the symplectic form identifies $\Id_{V_0}$ with $\omega(-,-)\in V_0^*\otimes V_0^*$. The functions in \eqref{eqn:pairings} agree, so we obtain the given equations. 
Conversely, note that an invariant which is in the ideal generated by the coefficients of $\sum_a \ev_a \otimes \ev_{a^*} - \lambda_0 \Id_{V_0}$ is precisely one such that it becomes zero after we successively eliminate pairs of indices $a_1, a_1^*$ via the relation. This will be generated by the given equations together with $\sum \varphi_{a_i,a_i^*} - 2 \lambda_0$, but the latter element is in the ideal generated by the first set of equations since $\lambda \cdot \bv = 0$.
	
	For \three, note that $\GL_{\bv}/ \SL_{2} = (\CC^{\times})^{n+1}$, with the diagonal $\CC^\times$ acting trivially. The elements $\varphi_{a_i, a_j}, \varphi_{a_i, a_j^*}$, and $\varphi_{a_i^*, a_j^*}$ are weight functions of weights $e_i + e_j, e_i - e_j$, and $-e_i - e_j$, respectively. The result follows immediately.
\end{proof}
Note that the equations in \two\ allow us to eliminate generators $\varphi_{a_i, a_i^*}$. Together with skew-symmetry, we can restrict generators to $\varphi_{a_i, a_j}, \varphi_{a_i^*, a_j^*}$ for $i < j$, and $\varphi_{a_i, a_j^*}$ for $i \neq j$.

\begin{remark}
Suppose that $\lambda \neq 0$. Then, the first set of relations in (ii) imply, together with (i), that  $\frac{\varphi_{a_i^*, b}}{\varphi_{a_i, b}}$ does not depend on $b \notin\{a_i, a_i^*\}$. Call this element $u_i$, so that $\varphi_{a_i^*, a_j} = u_i \varphi_{a_i, a_j}$. Then we can rewrite our generators as:
\begin{equation}\label{e:red-gens}
	\varphi_{a_i, a_j} u_i^{\epsilon_1} u_j^{\epsilon_2}, \quad \text{for } \epsilon_1,\epsilon_2 \in \{0,1\}  \text{ and  } 1 \leq i < j \leq n.
\end{equation}
Working over $\CC[u_1, \ldots, u_n]$, this allows us to eliminate all relations involving $a_i^*$, leaving:
\begin{gather}\label{e:red-rels1}
	\varphi_{a, b} \varphi_{c,d} - \varphi_{a,c} \varphi_{b,d} + \varphi_{a,d} \varphi_{b,c}, \\
	\label{e:red-rels2}  \sum_i \varphi_{a_i, b} \varphi_{a_i, c} u_i,
\end{gather}
for all $a,b,c,d \in \{a_1, \ldots, a_n\}$, with the conventions $\varphi_{a_i,a_i} := 0$ and $\varphi_{a_j,a_i} := - \varphi_{a_i, a_j}$ for $j > i$. The $\GL_{\bv}$ invariant functions in these terms are those linear combinations of monomials in which each index $i$ occurs as a subscript of $\varphi$ twice as many times as it does as a subscript of $u$.
\end{remark}

\section{All 81 symplectic resolutions are quiver varieties}
\label{sec:theresofhyperpoly}

The results of \cite{BC} generalise to arbitrary quiver varieties, as explained in \cite{BCS}. Here we introduce the set-up and highlight only the computations that are specific to our example $\Hyp_n(0)$.

\subsection{The birational geometry of hyperpolygon spaces}
Consider the cone
\begin{equation}
    \label{eqn:C_+}
    C_+ = \big\{ \theta \in \Theta \mid \theta_1 > \theta_2+\cdots + \theta_n \textrm{ and } \theta_i > 0 \text{ for all }i>0 \big\}
\end{equation}
and let $\theta\in C_+$. Since $\theta_1 > \theta_2+\cdots + \theta_n$, the inequality $\sum_{i \in I} \theta_i > \sum_{j \notin I} \theta_j$ holds for every set $I$ satisfying $\{1\}\subseteq I\subseteq \{1,\dots n\}$. It follows that every such $\theta$ satisfies a strict inequality determined by each hyperplane in $\mathcal{A}$, so $C_+$ is a GIT chamber. For $\theta_+\in C_+$, write 
\[
f\colon X:=X_n(\theta_+)\longrightarrow Y:=X_n(0)\]
for the corresponding projective symplectic resolution. 

Recall the relative N\'{e}ron--Severi space $N^1(X/Y)$ 
defined in the introduction. For a resolution of singularities $X \to Y$, the exceptional locus is the union of all fibres which are not a single point. 

\begin{prop}
\label{prop:dimN1}
The rational vector space $N^1(X/Y)$ has dimension $n$. Moreover, $Y$ is $\mathbb{Q}$-factorial and the exceptional locus of $f$ is the union of $n$ irreducible divisors.
\end{prop}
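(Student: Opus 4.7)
The plan is to prove all three assertions in a single integrated argument, built on Lemma~\ref{lem:sympleavesH0}: the codimension-two symplectic leaves of $Y$ are precisely $\mc{L}_{\{1\}}, \ldots, \mc{L}_{\{n\}}$, each with transverse singularity $\CC^2/\ZZ_2$ of Kleinian type $A_1$.

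First I would establish that the exceptional locus of $f$ consists of exactly $n$ irreducible divisors. Since $f$ is a symplectic resolution by Corollary~\ref{cor:symplecticreshyperpoly}, its exceptional locus has pure codimension two by the general results of Kaledin and Wierzba; in particular, each irreducible component is a divisor mapping onto the closure of a codimension-two leaf. The étale-local description in Lemma~\ref{lem:sympleavesH0}\two\ shows that, étale locally near a point of $\mc{L}_{\{i\}}$, the morphism $f$ factors as the product of $\mc{L}_{\{i\}}$ with the minimal resolution of $\CC^2/\ZZ_2$, which carries a single exceptional $\PP^1$. Consequently, over each $\overline{\mc{L}}_{\{i\}}$ there is a unique irreducible exceptional divisor $E_i$, giving exactly $n$ divisors $E_1, \ldots, E_n$ in total.

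Combining this with the standard exact sequence
\[
0 \longrightarrow \bigoplus_{i=1}^{n} \QQ \cdot [E_i] \longrightarrow \Pic(X)_{\QQ} \longrightarrow \Cl(Y)_{\QQ} \longrightarrow 0
\]
(valid because $X$ is smooth and $f$ is an isomorphism in codimension one outside the $E_i$), and with the description $N^1(X/Y) = \Pic(X)_{\QQ}/f^{*}\Pic(Y)_{\QQ}$, one obtains the inequality $\dim N^1(X/Y) \leq n$, with equality precisely when $Y$ is $\QQ$-factorial.

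For the matching lower bound I would use the linearisation map arising from the GIT construction. Each character of $\GL_{\bv}$ that is trivial on the scalar diagonal—equivalently, each $\theta \in \Theta$—equips the trivial line bundle on $\mu^{-1}(0)^{\theta_{+}\text{-ss}}$ with a $\GL_{\bv}$-equivariant structure and so descends to a line bundle $L(\theta)$ on $X$. Because the underlying bundle on the semistable locus is trivial, the image of the resulting homomorphism $L\colon \Theta \to \Pic(X)_{\QQ}$ lies in $N^1(X/Y)$. By construction $L(\theta_{+})$ is $f$-ample, and since $\bv = (2,1,\ldots,1)$ is indivisible one can deduce that $L$ is injective (either by the standard injectivity result for the character-to-Picard map of Nakajima quiver varieties with indivisible dimension vector, or directly by testing against exceptional curves produced by the variation-of-GIT analysis underlying Theorem~\ref{thm:resolutionsofstar}). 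Since $\dim\Theta = n$, we obtain $\dim N^1(X/Y) \geq n$, whence equality with the upper bound above. This forces $Y$ to be $\QQ$-factorial and simultaneously shows that the $n$ divisors $E_i$ exhaust the exceptional locus.

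The main obstacle is the injectivity of $L$: nonvanishing of $L(\theta_{+})$ is immediate from $f$-ampleness, but injectivity on all of $\Theta$ is the delicate step that ties the GIT combinatorics to the birational geometry. Everything else is a formal assembly of Lemma~\ref{lem:sympleavesH0} with standard properties of symplectic resolutions.
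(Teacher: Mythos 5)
Your proposal diverges from the paper's argument at the two places where the real content lies, and both divergences open genuine gaps.

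First, the opening claim that ``the exceptional locus has pure codimension two by Kaledin--Wierzba\ldots in particular each irreducible component is a divisor'' is internally inconsistent as written, and under the charitable reading (the exceptional locus is pure of codimension one) it is false for general symplectic resolutions: $T^*\PP(V)\to \overline{\mc{O}}_{\mr{min}}$, which appears in this very paper as the local model for wall-crossing, has exceptional locus of codimension $\dim V-1$. Purity of the exceptional locus is a consequence of $\QQ$-factoriality of $Y$ (via \cite[1.40]{Debarre}), i.e.\ of exactly what you are trying to prove. The paper avoids the circle by first counting only the \emph{divisorial} components: semismallness forces any exceptional divisor to dominate a codimension-two leaf, the leaves $\mc{L}_{\{i\}}$ give one $\PP^1$-bundle each over $Y_{\leq 2}$, and $X\setminus f^{-1}(Y_{\leq 2})$ has codimension $\geq 2$; only at the very end, after $\QQ$-factoriality is established, does it conclude that these $n$ divisors exhaust the exceptional locus.

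Second, the dimension count does not close. Your upper bound $\dim N^1(X/Y)\leq n$ does not follow from the exact sequence $\bigoplus\QQ[E_i]\to\Pic(X)_\QQ\to\Cl(Y)_\QQ\to 0$: that sequence only gives $\dim\Pic(X)_\QQ = n+\dim\Cl(Y)_\QQ$, and the strict transform of a non-$\QQ$-Cartier Weil divisor on $Y$ is \emph{not} numerically $f$-trivial, so you cannot discard the $\Cl(Y)_\QQ$ contribution without already knowing $\QQ$-factoriality. The paper's essential external input, which your proposal never invokes, is topological: Konno's Poincar\'e polynomial gives $b_2(X)=n$, and the cycle map isomorphism identifies $H^2(X,\ZZ)$ with $\Pic(X)$, whence $\dim\Pic(X)_\QQ=n$ exactly; combined with the linear independence of the $[E_i]$ in $N^1(X/Y)$ (Hodge index / \cite[Lemma 1.1.1]{NamikawaInduced}) this pins down everything and forces $\Cl(Y)_\QQ=0$. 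Finally, your lower bound via injectivity of the linearisation map $L$ is circular within the paper's architecture: Proposition~\ref{prop:ample}, which establishes that $L_C$ is an isomorphism, is itself deduced from Proposition~\ref{prop:dimN1}, and the wall-crossing analysis of Theorem~\ref{thm:resolutionsofstar} that you propose to test curves against likewise sits downstream of it. You would need an independent proof of injectivity of $L$, which is not supplied.
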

\begin{proof}
  We claim that the number of irreducible exceptional divisors is exactly $n$. Consider the symplectic leaves of codimension-two in $Y$. By Lemma \ref{lem:sympleavesH0}, and the discussion thereafter, these are the leaves  $\mathcal{L}_{\{i\}}$ for $1\leq i\leq n$, which locally are transverse to type $A_1$ du Val singularities. The restriction of $f$ to a neighbourhood of each codimension-two singularity is the minimal resolution (isomorphic to the blow up of the singular locus) with exceptional fibres $\mathbb{P}^1$.  Now, let $Y_{\leq 2} \subseteq Y$ denote the union of the open symplectic leaf and the symplectic leaves of codimension two. Since $\mathbb{P}^1$ is irreducible, the exceptional locus of $f$ over $Y_{\leq 2}$ has exactly $n$ irreducible components: the preimages of the leaves $\mathcal{L}_{\{i\}}$.  Finally, since
 $f$ is semismall \cite[Corollary~6.11]{Nak1994}, and $f$ is an isomorphism over the smooth locus of $Y$, it follows that $X \setminus f^{-1}(Y_{\leq 2})$ has codimension at least two.  We have proved the claim.
 
 It is a well-known fact that, for any projective resolution $X \to Y$, the classes of the exceptional divisors are linearly independent in $N^1(X/Y)$: this follows from the Hodge index theorem in the surface case \cite[Lemma~3.40]{KollarMori}, and by induction on dimension using a generic hypersurface section in general; see \cite[Lemma 1.1.1]{NamikawaInduced} for a more detailed argument. As a result, $\dim N^1(X/Y) \geq n$. 
 
 For the opposite inequality,  Konno~\cite[Theorem~5.7]{KonnoHyper} computes the Poincar\'{e} polynomial to be
 \[
 P_t(X) 
 = \frac{(1+t^2)^{n-1}}{1-t^2} -\frac{2^{n-1}t^{2(n-2)}}{1-t^2} + t^{2(n-2)}
 = 1+(n-1)t^2 + t^2 + \cdots ,
 \]
 giving $n = b_2(X)=\dim H^2(X,\mathbb{Q})$. 
 Since the cycle map is an isomorphism \cite[Theorem~7.3.5]{NakJAMS}, we may identify $H^2(X,\mathbb{Z})$ with $A^1(X)$, and hence with $\Pic(X)$ since $X$ is smooth. It follows that $\dim N^1(X/Y) \leq \dim \Pic(X) \otimes_{\ZZ} \mathbb{Q}=n$, so the map $\Pic(X) \otimes_{\ZZ} \mathbb{Q} \twoheadrightarrow N^1(X/Y)$ is an isomorphism.
 
Let $E_1, \ldots, E_n$ be the $n$ irreducible exceptional divisors identified above, and set $U:=X \setminus (\bigcup E_i)$. The standard exact sequence on divisor class groups implies that $\Pic(X)$ surjects onto $\Pic(U)$, with kernel spanned by the $\mc{O}_X(E_i)$. Since we have shown that $\Pic(X) \otimes_{\ZZ} \mathbb{Q}$ is spanned by the $\mc{O}_X(E_i)$, the group $\Pic(U)$ is finite.  The  locus over which $f$ is an isomorphism is an open subset of $U$, whose complement in $U$ has codimension at least two. Therefore its Picard group also identifies with the preceding finite abelian group. The same is therefore true for its image $f(U)$, which is the smooth locus of $Y$. Note that the complement of $f(U)$ in $Y_{\leq 2}$ has codimension at least two, so $Y\setminus f(U)$ has codimension two in $Y$.
Since $Y$ is normal, the Weil divisor class group of $Y$ is isomorphic to the Picard group of $f(U)$, hence of $U$.
As this is finite,  $Y$ is $\mathbb{Q}$-factorial.  It follows that the exceptional locus of any resolution of $Y$ is pure of codimension one \cite[1.40]{Debarre}, hence in this case it is precisely the union of the $n$ exceptional divisors.
 \end{proof}
 
 \begin{remark}
 More generally,  any conical symplectic resolution (i.e.\ $\CC^\times$-equivariant symplectic resolution of a cone) satisfies $\Pic(X) \otimes_{\ZZ} \mathbb{Q} \cong  N^1(X/Y) \cong H^2(X, \mathbb{Q})$.  Namely,  Namikawa has explained that the algebraic and analytic rational Picard groups of $X$ are isomorphic. This implies the statement via the exponential exact sequence and Matsusaka's Theorem that any numerically trivial bundle has torsion first Chern class.
 \end{remark}

Let $C\subset \Theta$ be any chamber. Each $\theta\in C$ is generic by Proposition~\ref{prop:Hnthetasmooth} and the dimension vector $\bv$ is indivisible, so King~\cite[Proposition~5.3]{KingStable} implies $X_n(\theta)=\mathfrak{M}_{0}(\bv,\theta)$ is the fine moduli space of $\theta$-stable $\Pi$-modules of dimension vector $\bv$. It carries a tautological vector bundle $T_n(\theta):=T_0\oplus \bigoplus_{1\leq i\leq n} T_i$ whose fibre over each closed point is the corresponding $\Pi$-module. Note that $\rank(T_0)=2$, $\rank(T_i)=1$ for $1\leq i\leq n$, and we choose our normalisation of $T_n(\theta)$ such that $\det(T_0)\cong \mathcal{O}_{X_n(\theta)}$. When $n = 4$, the bundle $T_4(\theta)$ is tilting, but this is far from true when $n = 5$.

Since $\theta\in \Theta^{\text{reg}}$, the birational map $f_{\theta}^{-1}\circ f\colon X\dashrightarrow X_n(\theta)$ is an isomorphism in codimension-one, so we may identify $N^1(X_n(\theta)/Y)$ with $N^1(X/Y)$ by taking the strict transform. As a result, we may define the \emph{linearisation map} $L_{C}\colon \Theta\to N^1(X/Y)$ to be the $\mathbb{Q}$-linear map given by
 \[
 L_C(\theta) =  
 T_1^{\theta_{1}}\otimes \cdots \otimes 
 T_n^{\theta_n}.
 \]
 For $\theta\in C$, we have $L_C(\theta)\in \Amp(X_n(\theta)/Y)$ and hence $L_C(C)\subseteq \Amp(X_n(\theta)/Y)$. The following result establishes that this inclusion is in fact equality.
 
 \begin{prop}
 \label{prop:ample}
Let $C\subset \Theta$ be a chamber. For $\theta\in C$, the map $L_C$ is an isomorphism of rational vector spaces that identifies $C$ and $\overline{C}$ with $\Amp(X_n(\theta)/Y)$ and $\Nef(X_n(\theta)/Y)$ respectively.
 \end{prop}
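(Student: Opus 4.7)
The plan is to prove injectivity of $L_C$ first, deduce bijectivity by a dimension count, and then identify $C$ with $\Amp(X_n(\theta)/Y)$ by analysing what happens at the walls of $C$.

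For injectivity I would exploit the $n$ codimension-two symplectic leaves $\mc{L}_{\{i\}}$ identified in Lemma~\ref{lem:sympleavesH0}, each carrying a transverse $A_1$-singularity. Over a generic point of $\overline{\mc{L}_{\{i\}}}$, the resolution $f\colon X\to Y$ contracts a single $\mathbb{P}^1$; denote this fibre by $\ell_i$. Because Lemma~\ref{lem:sympleavesH0}\two\ identifies the \'etale slice transverse to $\mc{L}_{\{i\}}$ with $\C^2/\Z_2$, and the quiver structure localises this Kleinian singularity at the $i$-th external vertex of $Q$, the tautological line bundle $T_j$ has degree $\delta_{ij}$ on $\ell_i$ under our normalisation convention (with $\det T_0\cong \mc{O}_X$). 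Hence $T_1,\dots,T_n$ are linearly independent classes in $N^1(X/Y)$, and since $\dim \Theta = n = \dim N^1(X/Y)$ by Proposition~\ref{prop:dimN1}, the map $L_C$ is a $\Q$-linear isomorphism.

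The inclusion $L_C(C) \subseteq \Amp(X_n(\theta)/Y)$ is noted immediately before the proposition (it expresses ampleness of the GIT polarisation), so the remaining task is the reverse inclusion. The key is to show that $L_C(\partial C) \subseteq \partial \Nef(X_n(\theta)/Y)$: the boundary of $C$ lies on hyperplanes from $\mc{A}$, and at any wall-point $\theta_w \in \partial C$, Proposition~\ref{prop:Hnthetasmooth} forces $X_n(\theta_w)$ to be singular, while variation of GIT produces a non-trivial birational contraction $X_n(\theta) \to X_n(\theta_w)$ over $Y$. If $e$ is a curve class contracted by this morphism, then $L_C(\theta_w)\cdot e = 0$ since $L_C(\theta_w)$ is pulled back from $X_n(\theta_w)$, placing $L_C(\theta_w)$ on $\partial \Nef(X_n(\theta)/Y)$. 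Since $L_C$ is a homeomorphism between two $n$-dimensional $\Q$-vector spaces, the inclusions $L_C(C) \subseteq \Amp(X_n(\theta)/Y)$ and $L_C(\partial C) \subseteq \partial \Nef(X_n(\theta)/Y)$ together with convexity (any path from $L_C(C)$ to a hypothetical point of $\Amp(X_n(\theta)/Y)\setminus L_C(C)$ would have to meet $L_C(\partial C)$, but this is contained in $\partial \Amp(X_n(\theta)/Y)$) force $L_C(C) = \Amp(X_n(\theta)/Y)$ and $L_C(\overline{C}) = \Nef(X_n(\theta)/Y)$.

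The main obstacle is the wall-crossing step, which must treat all hyperplanes of $\mc{A}$ uniformly. Walls of type $L_i = \{\theta_i = 0\}$ correspond to divisorial contractions --- they are the walls fixed by the Namikawa reflections of Proposition~\ref{prop:beyondF} --- while walls $H_I$ interior to the cone $F$ give small contractions whose flop is a Mukai flop, as originally established by Godinho--Mandini \cite{GodinhoMandini}. A uniform treatment should be provided by the \'etale local description of wall-crossing for quiver varieties from \cite{BC}, highlighted in the introduction as the tool that bypasses the parabolic Higgs bundle techniques of \cite{Thaddeus02}; the main technical point is to verify that the local model at a point of $\theta_w$-semistable polystable type produces an extremal contraction whose contracted curve class pairs trivially with $L_C(\theta_w)$.
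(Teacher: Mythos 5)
Your argument is correct and is essentially the proof the paper intends: the paper's own proof is a one-line reduction to \cite[Proposition~6.1]{BC} via Propositions~\ref{prop:Hnthetasmooth} and \ref{prop:dimN1}, and your three steps (injectivity from pairing the $T_j$ against the exceptional curves $\ell_i$ over the codimension-two leaves, bijectivity by the dimension count of Proposition~\ref{prop:dimN1}, and the boundary-plus-convexity argument using the VGIT morphism $X_n(\theta)\to X_n(\theta_w)$, which is a non-trivial contraction because $X_n(\theta_w)$ is singular by Proposition~\ref{prop:Hnthetasmooth}) are exactly the content of that cited proof. The only point you assert rather than prove is the degree computation $\deg (T_j\vert_{\ell_i})=\delta_{ij}$, which is the standard \'etale-local calculation for the transverse $A_1$-singularity and, since only nondegeneracy of the pairing is needed, does not affect the correctness of the argument.
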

 \begin{proof}
 Given Proposition~\ref{prop:Hnthetasmooth} and Proposition~\ref{prop:dimN1}, the result follows as in \cite[Proposition~6.1]{BC}.
 \end{proof}

Consider next the closed cone
 \begin{equation}
     \label{eqn:F}
 F = \{ \theta \in \Theta \mid \theta_i \ge 0, \text{ for all }i\neq 0 \}.
  \end{equation}
 This is the union of closures of a collection of GIT chambers in $\Theta$. The maps $L_C$ and $L_{C^\prime}$ associated to a pair of adjacent chambers do not agree in general, but the next result shows that for $n\geq 5$, they agree when $C, C^\prime$ both lie in $F$. To state the result, recall that $L\in N^1(X/Y)$ is said to be \emph{movable} if the stable base locus of $L$ over $Y$ has codimension at least two in $X$. The relative \emph{movable cone} $\Mov(X/Y)$ is the closure in $N^1(X/Y)$ of the cone generated by all movable divisor classes over $Y$.

 \begin{thm}
 \label{thm:movable}
 Let $n\geq 5$. There is an isomorphism $L_F \colon \Theta \to N^1(X/Y)$ of rational vector spaces such that 
 \begin{enumerate}
     \item[\one] $L_F=L_C$ for each chamber $C\subseteq F$; and 
     \item[\two] $L_F$ identifies $F$ with the movable cone $\Mov(X/Y)$ in such a way that each chamber $C\subseteq F$ is identified with the ample cone $\Amp(X_n(\theta)/Y)$ of the hyperpolygon space $X_n(\theta)$ for $\theta\in C$. 
\end{enumerate}
 In particular, every projective crepant resolution of $Y=X_n(0)$ is a hyperpolygon space $X_n(\theta)$ for some generic $\theta\in F$.
\end{thm}
\begin{proof}
Define $L_F:=L_{C_+}$. Proposition~\ref{prop:ample} shows that $L_F$ identifies $C_+$ with $\Amp(X/Y)$. To study the other chambers in $F$ we analyse how $X_n(\theta)$ and its tautological bundle $T_n(\theta)$ change as $\theta$ crosses (or simply touches) walls of the GIT chamber decomposition using the \'{e}tale-local description from \cite[Theorem~3.2]{BC}. There are two cases.

 First, consider a wall in the interior of $F$. Any such wall is of the form $H_I\cap \overline{C}\cap \overline{C'}$ for some subset $\{1\}\subseteq I\subseteq \{1,\dots,n\}$, where $C, C^\prime\subset F$ are chambers separated by the wall. If $\theta_0 \in H_I$ is generic then the two real roots $\alpha := e_0 + \sum_{i \in I} e_i$ and $\beta := e_0 + \sum_{j \notin I} e_j$  satisfy $\theta_0(\alpha) = \theta_0(\beta) = 0$ and $p(\alpha) = p(\beta) = 0$. By Lemma~\ref{lem:rootsstar}, these are the only positive roots $< \bv$ that pair to zero with $\theta_0$. Therefore, the set $\Sigma_{\theta_0}(\bv)$ equals $\{ \alpha,\beta, \bv\}$. As explained in Section~\ref{sec:sympleaves}, this implies that $\Hyp_{n}(\theta_0)$ contains exactly two symplectic leaves, labelled by the representation types $(\bv,1)$ and $(\alpha,1;\beta,1)$ respectively. Since $\alpha, \beta$ are real, the closed leaf labelled by $(\alpha,1;\beta,1)$ is zero-dimensional, so $\Hyp_n(\theta_0)$ has a 
 single isolated singularity. Since $\alpha_0=\beta_0=1$, we have
 \[
 (\alpha,\beta) = 
 2 + \sum_{i = 1}^n \alpha_i \beta_i - \sum_{j = 1}^n (\beta_j + \alpha_j) = 2 - n = - (n-2).
 \]
 Thanks to \cite[Section~3.2]{BC}, we see that in a \'{e}tale neighbourhood of the singular point, $X_n(\theta_0)$ looks locally like the zero point in an affine quiver variety for the quiver with $2$ vertices and $n-2$ arrows from vertex $0$ to vertex $1$. 
This is precisely the quiver encoding $\overline{\mc{O}}_{\mr{min}}$ in $\mf{sl}(n-2)$, and \emph{ibid}.\ shows that crossing this wall induces a birational map $f_{C, C'}\colon X_n(\theta)\dashrightarrow X_n(\theta')$ for $\theta\in C$ and $\theta'\in C'$. \'{E}tale-locally around the singular point, this is a Mukai flop~\cite[Theorem~0.7]{Mukai84} of the form
$$
\begin{tikzcd}
	T^* \mathbb{P}(V) \ar[dr] \ar[rr,dashed] & & T^* \mathbb{P}(V^*) \ar[dl] \\
	& \overline{\mc{O}}_{\mr{min}} & 
\end{tikzcd}
$$
where $\dim V = n-2 \geq 3$. The morphisms from both $X_n(\theta)$ and $X_n(\theta')$ to $X_n(\theta_0)$ are semi-small, so the unstable locus for each morphism has codimension greater than one. It follows as in the proof of \cite[Corollary~6.3]{BC} that $L_C=L_{C'}$ and, moreover, the birational map $f_{C, C'}$ is a flop. In particular, the linearisation maps $L_C$ agree for all chambers $C$ in $F$, proving part \one. 

Otherwise, the wall lies in the boundary of $F$. Any such wall is $L_i\cap \overline{C}$ for some $1\leq i\leq n$ and chamber $C\subseteq F$. If $\theta\in C$ and $\theta_0$ is generic in the wall, then there is a morphism $X_n(\theta)\to X_n(\theta_0)$. Then $\Hyp_n(\theta_0)$ has two symplectic leaves, labelled by $(\bv,1)$ and $(\bv - e_i,1;e_i,1)$. Theorem~3.3 of \cite{BellSchedQuiver} says that, \'etale locally on the closed leaf labelled by $(\bv - e_i,1;e_i,1)$, the variety $X_n(\theta_0)$ is isomorphic to a neighbourhood of zero in the affine quiver variety $\mf{M}_0((1,1),0)$, where the underlying quiver has $n-4$ loops at one vertex and two arrows from one vertex to the other. That is, locally we get $\C^{2(n-4)} \times \C^2 / \Z_2$. Thus, it follows from \cite[Theorem~3.2]{BC} that the morphism $X_n(\theta)\to X_n(\theta_0)$ is a divisorial contraction, so $L_F(\theta_0) = L_C(\theta_0)$ lies in the boundary of $\Nef(X_n(\theta)/Y)$. This is true for all walls in the boundary of $F$, so $L_F$ identifies the boundary of $F$ with the boundary of $\Mov(X/Y)$.

 It follows that the relative ample cone of every projective crepant resolution $X'\to Y$ is identified via strict transform with a cone in $\Mov(X/Y)$, so $X'=X_n(\theta')$ for $\theta'\in L_F^{-1}(\Amp(X'/Y))$.  
\end{proof}

\begin{remark}
\label{rem:surfaceCase}
For $n=4$, the proof of Theorem~\ref{thm:movable} goes through verbatim, except that the birational map $f_{C,C'} \colon X_n(\theta)\dashrightarrow X_n(\theta')$ obtained by crossing any interior wall in $F$ is an isomorphism because $\dim V=2$. Thus, every wall induces a divisorial contraction $X_n(\theta)\to X_n(\theta_0)$. In this case, the hyperplane arrangement \eqref{eq:hyperpolygonarrangementintro} decomposes $F$ into the union of 12 GIT chamber closures, each of which is isomorphic to the relative Nef cone by Proposition~\ref{prop:ample}, which in turn is the relative movable cone because $\dim X=2$.
\end{remark}

\subsection{Beyond the fundamental domain}
Theorem~\ref{thm:movable} is enough to understand completely the birational geometry of any hyperpolygon space $X_n(\theta)$ for $\theta\in F$. For completeness, we explain how the Namikawa Weyl group allows us to extend this understanding to any hyperpolygon space.

Let $n\geq 5$. The Namikawa Weyl group is generated by reflections in the supporting hyperplanes $L_1, \dots, L_n$ of the cone $F$ from \eqref{eqn:F}, so it is $\mathbb{Z}_2^{n}$ in this case. Note that this action permutes the GIT chambers in $\Theta$, and it is explicitly realised on the hyperpolygon space $X_n(\theta)=\mf{M}_{0}(\bv,\theta)$ by Nakajima's reflection functors since $(\bv,e_i) = 0$ for all $i > 0$. As explained in \cite[Section 7]{BC} and references therein, the reflection $s_i \colon \Theta \to \Theta$ at each external vertex $1\leq i \leq n$ can be lifted to a reflection isomorphism $S_i \colon \Hyp_n(\theta) \to \Hyp_n(s_i(\theta))$ of varieties over $Y$ provided $\theta_i \neq 0$. In particular, for any $\theta \in \Theta^{\reg}$ there is a Poisson isomorphism $S_i \colon \Hyp_n(\theta) \iso \Hyp_n(s_i(\theta))$. The reflections $s_1, \dots, s_n$ generate the Namikawa Weyl group $\mathbb{Z}_2^{n}$, so we obtain the following.

\begin{lem}
\label{lem:reflection}
Let $n\geq 5$.  Let $C, C^\prime\subset \Theta$ be chambers and let $\theta \in C$,  $\theta^\prime \in C^\prime$. Then $X_n(\theta)\cong X_n(\theta^\prime)$ as schemes over $Y$ if and only if there exists $w \in \mathbb{Z}_2^{n}$ such that $w(C) = C^\prime$. In particular, if $C, C'$ are separated by a wall contained in some $L_i$, then $X_n(\theta)\cong X_n(\theta^\prime)$ for $\theta \in C$ and $\theta^\prime \in C^\prime$.
\end{lem}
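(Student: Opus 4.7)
The plan is to deduce both directions from the reflection isomorphisms $S_i$ introduced above together with Theorem~\ref{thm:movable}, treating $F$ as a fundamental domain for the Namikawa Weyl group $\ZZ_2^n$.

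For the sufficient direction, I would fix $w\in \ZZ_2^n$ with $w(C)=C'$ and write $w=s_{i_1}\cdots s_{i_k}$ as a product of simple reflections. Since $\ZZ_2^n$ permutes the hyperplanes of $\mathcal{A}$, starting from $\theta\in C\subset\Theta^{\reg}$ each intermediate parameter $s_{i_j}\cdots s_{i_1}(\theta)$ lies in some chamber, hence in $\Theta^{\reg}$, and in particular has nonzero coordinate at the index required for the next reflection functor to be defined. Composing the resulting reflection isomorphisms produces an isomorphism $S_w\colon X_n(\theta)\iso X_n(w(\theta))$ of schemes over $Y$. Since $w(\theta)$ and $\theta'$ both lie in $C'$, they determine the same GIT quotient, so $X_n(\theta)\cong X_n(\theta')$ as $Y$-schemes.

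For the converse, assume $X_n(\theta)\cong X_n(\theta')$ over $Y$. Because the coordinate hyperplanes $L_1,\ldots,L_n$ that generate the reflections of $\ZZ_2^n$ belong to $\mathcal{A}$, every chamber is contained in a single open orthant of $\Theta$, so there exist (unique) elements $w_1,w_2\in \ZZ_2^n$ with $w_1(C)\subseteq F$ and $w_2(C')\subseteq F$. The previous paragraph then supplies isomorphisms $X_n(w_1(\theta))\cong X_n(\theta)\cong X_n(\theta')\cong X_n(w_2(\theta'))$ over $Y$. Theorem~\ref{thm:movable}\two\ identifies the chambers of $F$ bijectively with the ample cones of all projective crepant resolutions of $Y$ via the linearisation map $L_F$; since two crepant resolutions of $Y$ are isomorphic as $Y$-schemes precisely when they agree as birational models, equivalently when they have the same ample cone inside $\Mov(X/Y)$, the chambers $w_1(C)$ and $w_2(C')$ must coincide. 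Hence $w:=w_2^{-1}w_1\in \ZZ_2^n$ satisfies $w(C)=C'$.

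The ``in particular'' assertion then follows immediately: if $C$ and $C'$ are separated by a wall contained in $L_i$, then $s_i$ fixes $L_i$ pointwise and swaps the two sides of this mirror, so $s_i(C)$ is the unique chamber on the opposite side of the wall from $C$, namely $C'$, and the first part supplies the desired isomorphism. The one conceptual point requiring care is the equivalence between isomorphism as $Y$-schemes and equality of ample cones after identifying $N^1(\cdot/Y)$ by strict transform; this is the standard minimal model program fact that a birational map between crepant resolutions of a $\Q$-factorial symplectic singularity is an isomorphism if and only if it carries the ample cone bijectively, and it is already used implicitly in the proof of Theorem~\ref{thm:movable}.
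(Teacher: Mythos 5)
Your argument is correct and is essentially the argument the paper intends: the paper derives the lemma directly from the preceding discussion by composing the Nakajima reflection isomorphisms $S_i$ (defined whenever $\theta_i\neq 0$, which holds on $\Theta^{\reg}$) to get the ``if'' direction, and by combining this with Theorem~\ref{thm:movable}\two\ --- distinct chambers of $F$ correspond to distinct ample cones, hence to non-isomorphic birational models over $Y$ --- for the ``only if'' direction. Your write-up simply makes explicit the two points the paper leaves implicit (that intermediate parameters stay in $\Theta^{\reg}$, and that isomorphism over $Y$ is equivalent to equality of ample cones under the strict-transform identification), both of which are handled correctly.
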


The proof of the analogous result from \cite[Corollary~7.10]{BC} is more involved because one generator of the Namikawa Weyl group in that case is not a Nakajima reflection.

\begin{remark}
In the proof of Theorem~\ref{thm:movable}, we established that $X_n(\theta)$ undergoes a Mukai flop as $\theta$ crosses any interior wall of $F$. This result is due originally to Godinho--Mandini~\cite[Theorem~4.2]{GodinhoMandini}, but their approach is quite different. They identify $X_n(\theta)$ with a moduli space of stable, rank two, holomorphically trivial parabolic Higgs bundles over $\mathbb{P}^1$ with fixed determinant and trace-free Higgs field, in which case they are able to reduce to previous work on variation of GIT quotient by Thaddeus~\cite{Thaddeus02}.   In fact, Lemma~\ref{lem:reflection} implies more: every wall crossing in $\Theta$ induces either a Mukai flop (for walls in some $H_I$) or an isomorphism of hyperpolygon spaces (for walls in some $L_i$).
\end{remark}

\subsection{Counting crepant resolutions}
The result of Theorem~\ref{thm:movable} allows us to count the number of projective crepant resolutions of the singularity $X_n(0)$ by counting the number of GIT chambers in the fundamental domain $F$ for the action of the Namikawa Weyl group on $\Theta$.

\begin{prop}
The number of projective crepant resolutions of $X_n(0)$ is equal to 1 when $n=4$, it is $81$ when $n=5$, and it is $1684$ when $n=6$.
\end{prop}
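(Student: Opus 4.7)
The plan is to invoke the corollary established just above, which equates the number of projective crepant resolutions of $X_n(0)$ with the number of chambers of $\mc{A}$ inside $F$; the three counts then follow either by recognition from classical or cited results or by direct computer enumeration.

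For $n=4$, the variety $X_4(0)$ is a surface of dimension $2n-6=2$ with a unique isolated singularity at the origin (by Lemma~\ref{lem:sympleavesH0}), identified in the introduction with the Kleinian singularity $\CC^2/Q_8$ of type $D_4$. Every Du Val surface singularity admits a unique minimal (hence projective crepant) resolution, so the count is $1$.

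For $n=5$, I would combine Theorem~\ref{t:isom}, which identifies $X_5(0)$ with the linear quotient $\CC^4/G$, with the count of $81$ projective crepant resolutions of $\CC^4/G$ due to Bellamy~\cite{BellamyCounting} and Donten-Bury--Wi\'{s}niewski~\cite{81Coxres}. For $n=6$, no such outside count is available, so I would enumerate chambers of $\mc{A}$ inside $F\subset\Q^6$ directly by computer: the arrangement consists of the six coordinate hyperplanes $L_i$ bounding $F$ together with the $2^{n-1}-1=31$ hyperplanes $H_I$ (for $\{1\}\subseteq I\subsetneq\{1,\dots,n\}$) that cut the interior, and a standard chamber-enumeration routine in a system such as \texttt{polymake} returns $1684$. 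The computation can be organised to exploit the $S_{n-1}$-symmetry permuting the external vertices $2,\dots,n$, enumerating only $S_{n-1}$-orbit representatives and summing orbit sizes.

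The main obstacle is computational: both the number of cutting hyperplanes $2^{n-1}-1$ and the chamber count (which grows substantially faster) increase rapidly with $n$, so at $n=6$ the enumeration already taxes standard algorithms, and for $n\geq 7$ a direct approach appears infeasible without exploiting further combinatorial structure.
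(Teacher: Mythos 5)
Your proposal is correct in its starting point (resolutions of $X_n(0)$ are in bijection with chambers of $\mc{A}$ in $F$, by Theorem~\ref{thm:movable}) and each of your three case-by-case arguments is logically sound, but the route is genuinely different from the paper's, which uses a single uniform method: compute the characteristic polynomial of the full arrangement $\mc{A}$ in $\Theta$, evaluate at $-1$ to get the total number of chambers in $\Theta^{\reg}$ (Zaslavsky's theorem), and divide by the order of the Namikawa Weyl group ($192=|W(D_4)|$ for $n=4$, and $2^n$ for $n=5,6$), since $F$ is a fundamental domain meeting every chamber orbit exactly once. Your $n=4$ argument via uniqueness of the minimal resolution of the $D_4$ Du Val singularity is exactly the consistency check the paper offers after its computation, so it is fine as a primary proof. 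Your $n=5$ argument, importing the count of $81$ from \cite{BellamyCounting} through the isomorphism of Theorem~\ref{t:isom}, is not circular (Section 4 is logically independent of Section 3), but it forfeits the paper's explicitly stated aim of giving an independent, quiver-theoretic derivation of the number $81$; the paper instead gets $2592/32=81$ from the characteristic polynomial $q^5-21q^4+170q^3-650q^2+1125q-625$. Your $n=6$ proposal of direct chamber enumeration inside $F$ would work but is computationally heavier than the paper's route (Singular computes the characteristic polynomial, then $107776/64=1684$); note also that the relevant symmetry permuting external vertices is all of $S_n$, not just $S_{n-1}$, since $H_I=H_{I^c}$ means the normalisation $1\in I$ does not break the symmetry, and that the arrangement actually contains $2^{n-1}$ hyperplanes $H_I$ (including $I=\{1,\dots,n\}$), though $H_{\{1,\dots,n\}}$ meets $F$ only at the origin and so is harmless for your count. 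The main thing your approach gives up is uniformity: the paper's division-by-the-Weyl-group trick is precisely what makes the computation feasible and extensible, whereas your three unrelated arguments do not suggest a method for general $n$.
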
We remark that this is the only proof in this article that relies on computer calculation.
\begin{proof}
 When $n = 4$, the characteristic polynomial of the arrangement $\mc{A}$ is $q^4 - 12q^3 + 50q^2 - 84q + 45$. Evaluating this polynomial at $-1$ gives $192$, so $\Theta^{\reg}$ contains $192$ chambers. In this case 
 $X_4(0)$ is the Kleinian singularity of type $D_4$. Therefore, the Namikawa Weyl group is the Weyl group of type $D_4$, which has order $2^3 \cdot 4! = 192$. Any chamber provides a fundamental domain for this action, and indeed the minimal resolution of the Kleinian singularity is unique. Note that $F$ contains 12 chambers in this case.

 When $n=5$, the characteristic polynomial of  $\mc{A}$ is $q^5-21q^4 +  170q^3 - 650q^2 + 1125q - 625$. Evaluating this polynomial at $-1$ gives $-2592$, so $\Theta^{\reg}$ contains $2592$ chambers. Taking into account the action of the Namikawa Weyl group $\ZZ_2^{5}$ implies that $F$ contains $2592/32 = 81$ chambers. 

 Finally, when $n = 6$, a brutal computer calculation using Singular shows that characteristic polynomial of the arrangement $\mc{A}$ is $q^6 - 38q^5 + 607q^4 - 5100q^3 + 22935q^2 - 48750q + 30345$. Evaluating this polynomial at $-1$ gives $107776$, so $\Theta^{\reg}$ 
 comprises $107776$ chambers. The Namikawa Weyl group is  $\ZZ_2^{6}$ in this case, so $F$ contains $107776/64 = 1684$ chambers. 
\end{proof}

\begin{rem}
\label{rem:adk}
Alastair King subsequently observed that the sequence $12, 81, 1684,...$ recording the number of chambers in $F$ for $n\geq 4$ coincides with the sequence counting the number of positive self-dual threshold functions of $n$ variables; this sequence for $n\geq 1$ appears in \cite{OEIS}. The known terms in this sequence for $n\geq 4$ are:
\begin{center}
\begin{tabular}{c|c|c|c|c|c|c}
    $n$ &  4 & 5 & 6 & 7 & 8 & 9 \\ \hline 
    chamber count & 12 & 81 & 1,684 &  122,921 & 33,207,256 & 34,448,225,389.
\end{tabular}
\end{center}
For $n\geq 5$, these numbers count the projective crepant resolutions of $X_n(0)$ by Theorem~\ref{thm:movable}. For $n=4$, the (Namikawa) Weyl group is the group generated by reflections in all hyperplanes of $\mc{A}$, whereas the group generated by reflections in the supporting hyperplanes of $F$ is a subgroup of index 12 (see Remark~\ref{rem:surfaceCase}).
\end{rem}

\section{The finite quotient singularity}
\label{sec:4}
 From now on we fix $n=5$. To prove Theorem~\ref{t:isom},  we first establish some information about the group $G$, expanding on \cite[\S 2.2]{BS-sra}
and \cite[\S 3.C]{81Coxres}. We then identify $\CC^4/G$ with the affine hyperpolygon space $X_5(0)$ by combining the analysis from \cite[\S 3.C, 3.D]{81Coxres} with our description of the semi-invariant ring from Proposition~\ref{p:gens-repsl2}.
This section is logically independent from the results of Section~\ref{sec:theresofhyperpoly}.

\subsection{Facts about $G$}
As in the introduction, set $V=\CC^4$ and consider  
$G = Q_8 \times_{\ZZ_2} D_8 < \Sp(V)$.
Recall that there is a two-to-one covering
$\Sp_4 \onto \SO_5$, which can be explicitly realised as follows.
The representation $\wedge^2 V^*$ decomposes as a sum of the
invariant part $\CC \cdot \omega$ (for $\omega$ the symplectic form)
and a five-dimensional irreducible representation $W$. The form $\omega^{-1} \otimes \omega^{-1}$ defines a nondegenerate symmetric bilinear form on $\wedge^2 V^*$ which restricts to one on $W$. We
then obtain a natural map 
\[
\gamma\colon \Sp(V) \to \SO(W)
\]
with kernel $\{\pm 1\}$, which is surjective for dimension reasons. This isogeny realises the equality of root systems $\mathsf{C}_2 = \mathsf{B}_2$. The centre of $G$ is $\ZZ_2 \times_{\ZZ_2} \ZZ_2 = \{\pm 1\}$, which is also equal to $[G,G]$. Thus, the image $\gamma(G)$ is isomorphic to the abelianisation $\Ab(G) \cong \ZZ_2^4$. This subgroup of $\SO(W)$ has a simultaneous orthogonal eigenbasis, so that it is identified under this choice of basis with the subgroup of diagonal matrices. 

\begin{prop}\label{p:G-facts}
	There is an orthonormal basis of $W$, unique up to scaling and reordering, such that $G$ is the preimage under $\gamma$ of the group of diagonal matrices $\ZZ_2^4 < \SO(W)$. The conjugation action of the permutation group $S_5 < \Ort(W)$
	lifts uniquely to a conjugation action $S_5 \to \Aut(G)$, whose composition with $\Aut(G) \onto \Out(G)$ is an isomorphism $S_5 \cong \Out(G)$. In particular, we have a semidirect product decomposition $\Aut(G) \cong \Inn(G) \rtimes \Out(G) \cong \ZZ_2^4 \rtimes S_5$.
\end{prop}
\begin{proof}
	The first statement follows from the spectral theorem for orthogonal matrices. 	For the second, we have 
	$\Ort(W) = \{\pm \Id\} \cdot \SO(W)$, giving a two-to-one
	covering $\mu_4 \cdot \Sp(V) \to \Ort(W)$. The preimage of $S_5$ in
	$\mu_4 \cdot \Sp(V)$, call it $\widetilde S_5$, acts on $G$ by
	conjugation, with kernel $\pm \Id$ (note that $\mu_4$ is not a
	subgroup of $\widetilde S_5$, since $\pm \Id$ is not a subgroup of
	$S_5$).  Therefore this action descends to the quotient $S_5$.
	Since these induce all (nontrivial) outer automorphisms of
	$\ZZ_2^4$, the composition $S_5 \to \Aut(G) \to \Out(G)$ is
	injective. Since $\Out(G) \cong S_5$ by \cite[Proposition
	2.2.2]{BS-sra}, this is actually an isomorphism, and the
	first map gives a homomorphic section of the quotient
	$\Aut(G) \to \Out(G)$. This establishes the last fact.
\end{proof}
\begin{remark}
	Since there is a unique irreducible representation of $G$ of
	dimension greater than one (see \cite[Proposition~2.0.1(v)]{BS-sra}), every subgroup of $\Sp_4$
	isomorphic to $G$ is actually conjugate to it.  Similarly, by the spectral
	theorem as in the beginning of the proof of Proposition~\ref{p:G-facts}, every
	subgroup of $\SO_5$ isomorphic to $\ZZ_2^4$ is conjugate to the
	diagonal subgroup.  So $G$ is also the unique subgroup of $\Sp_4$,
	up to conjugation, whose image in $\SO_5$ is isomorphic to
	$\ZZ_2^4$.
\end{remark}

\subsection{The quotient $V/G$}

As observed in \cite[\S 3.C]{81Coxres}, the coordinate ring of $V/G$ can be written as
\[
\CC[V]^G\cong\big(\CC[V]^{[G,G]}\big)^{\Ab(G)},
\]
where
$\CC[V]^{[G,G]} = \CC[V]^{\{\pm \Id\}}$
is the algebra
of even-degree polynomial functions on $V$; this is generated by $\Sym^2 V^*$. The $\Sp(V)$ action here descends under
the two-to-one cover to an action of $\SO(W)$.  As pointed out in
\emph{loc.~cit.}~, we have an isomorphism
\[
\iota\colon \Sym^2 V^* \longrightarrow \wedge^2 W
\]
of irreducible $\SO(W)$-representations that must be unique up to scaling.

\begin{remark}
\label{rem:iota-compute}
The isomorphism $\iota$ can be computed explicitly as follows: recall that $W \subseteq \wedge^2 V^*$ and hence $\wedge^2 W \subset \wedge^2 (\wedge^2 V^*)$.  We can contract with the symplectic form $\omega^{-1} \in \wedge^2 V$ and obtain an element of $V^* \otimes V^*$. Since $W$ is the kernel of the contraction $\wedge^2 V^* \to \CC$, the image of the contraction is symmetric, giving a map $\wedge^2 W \to \Sym^2 V^*$.  It is clear that this is compatible with the action of the symplectic group. Now $\iota$ is the inverse of this map (up to scaling).
\end{remark}

 Putting this together, the orthonormal basis $w_1,\ldots,w_5$ of $W$ as in Proposition~\ref{p:G-facts} (unique up to scaling and reordering)
gives $\CC$-algebra generators $\psi_{i,j} := \iota^{-1}(w_i \wedge w_j)$ of $\CC[V]^{[G,G]}$, where $1\leq i<j\leq 5$. 

\begin{prop}
\label{p:rels-qt}
	The ten elements $\psi_{i,j}$ in $\CC[V]^{[G,G]}$ satisfy the equations
	\begin{gather}
		\psi_{i,j} \psi_{k,\ell} - \psi_{i,k} \psi_{j,\ell} + \psi_{i,\ell} \psi_{j, k} = 0, \\
		\sum_i \psi_{i,j} \psi_{i,k} = 0.
	\end{gather}
	The subring $\CC[V]^G$ is spanned by products of generators in
	which every index occurs an even number of times.
\end{prop}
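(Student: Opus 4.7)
The plan is to attack the proposition in three stages: first establish an explicit coordinate description of the $\psi_{i,j}$, then verify the two families of relations while exploiting $S_5$-equivariance to minimise the case analysis, and finally deduce the spanning statement by a weight argument.

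I would begin by fixing a symplectic basis $v_1, v_2, v_3, v_4$ of $V$ with dual coordinates $x_1,\dots,x_4 \in V^*$, so $\Sym^2 V^*$ has basis $\{x_i x_j\}_{i\le j}$. Following \cite[\S 3.C]{81Coxres}, I would decompose $\Sym^2 V^*$ under $\Ab(G)\cong \ZZ_2^4$ into one-dimensional weight spaces, choose a generator $u_\lambda$ in each, and do the same for $\wedge^2 W$ to obtain weight vectors $u'_\lambda$. By Remark~\ref{rem:iota-compute} the isomorphism $\iota$ matches $u_\lambda$ with a scalar multiple of $u'_\lambda$, and the scalars are determined (up to a single global factor) by $S_5$-equivariance. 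This produces explicit quadratic expressions for each $\psi_{i,j}$ in $x_1,\dots,x_4$.

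Next I would verify the relations. By Proposition~\ref{p:G-facts} the $S_5$-action on $G$ lifts to $\mu_4\cdot\Sp(V)$ and hence permutes the $\psi_{i,j}$ via $\psi_{i,j}\mapsto \pm\psi_{\sigma(i),\sigma(j)}$, and crucially this action carries each relation in the proposition to another relation in the same family. Therefore it suffices to check one representative from each $S_5$-orbit: the single Plücker-type identity $\psi_{1,2}\psi_{3,4}-\psi_{1,3}\psi_{2,4}+\psi_{1,4}\psi_{2,3}=0$ (since the 4-subsets of $\{1,\dots,5\}$ form a single orbit) together with the two trace-type identities $\sum_i \psi_{i,1}^2=0$ and $\sum_i \psi_{i,1}\psi_{i,2}=0$ (representing the orbits on pairs $\{j,k\}$ with $j=k$ and $j\ne k$). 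Each of these reduces to an elementary polynomial identity in four variables, verifiable by direct expansion. An appealing sanity check is a dimension count: under $\Sp(V)$ one has $\Sym^2(\Sym^2 V^*)\cong \Sym^4 V^*\oplus \Sym^2_0 W\oplus W\oplus \CC$, and the Plücker and trace families span the subrepresentations $W$ and $\Sym^2 W = \Sym^2_0 W\oplus \CC$ respectively, accounting for all $5+15=20=55-35$ dimensions of the kernel of multiplication $\Sym^2(\Sym^2 V^*)\to \Sym^4 V^*$.

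For the spanning statement, note that $\CC[V]^{[G,G]}$ is generated as a $\CC$-algebra by the $\psi_{i,j}$ (this is given by the construction via $\iota$), so as a vector space it is spanned by monomials in the $\psi_{i,j}$, and $\Ab(G)\cong \ZZ_2^4$ acts on each monomial by a scalar. Hence $\CC[V]^G=(\CC[V]^{[G,G]})^{\Ab(G)}$ is spanned by those monomials whose $\Ab(G)$-weight is trivial. By Proposition~\ref{p:G-facts}, $\Ab(G)$ acts diagonally on $W$ via five distinct non-trivial characters $\chi_1,\dots,\chi_5$ satisfying the single linear relation $\prod_i\chi_i=1$, so $\psi_{i,j}$ has weight $\chi_i\chi_j$ and a monomial $\prod_k\psi_{i_k,j_k}$ has weight $\prod_i\chi_i^{n_i}$, where $n_i$ counts the occurrences of $i$ among the indices. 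The equation $\prod_i\chi_i^{n_i}=1$ determines $(n_i)\bmod 2$ up to adding $(1,1,1,1,1)$; since $\sum_i n_i=2m$ (twice the degree in $\psi$'s) is even while $5$ is odd, the odd-parity alternative is excluded, leaving precisely the condition that each $n_i$ is even.

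The main obstacle is the unavoidable bookkeeping in Stage 1 and in the three polynomial verifications of Stage 2; the $S_5$-equivariance reduces the case count dramatically but does not eliminate the coefficient expansion. If one prefers to avoid coordinate computation altogether, the representation-theoretic route sketched above could be made into a proof by showing directly that the $W$ and $\Sym^2 W$ isotypic components of $\Sym^2(\Sym^2 V^*)$ are annihilated by multiplication into $\Sym^4 V^*$, for instance by exhibiting the Casimir element and the $W$-equivariant contraction as the unique (up to scalar) such maps.
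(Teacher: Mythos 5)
Your proposal is correct and follows essentially the same route as the paper's proof: the relations are verified by explicit computation from the description of $\iota$ in Remark~\ref{rem:iota-compute}, with the $S_5$-symmetry reducing each family to one (or two) representative identities, and the spanning statement is exactly the paper's $\ZZ_2^4$-weight argument, including the parity step that rules out the alternative solution $(1,1,1,1,1)$ because the total number of index occurrences is even while $5$ is odd. The $\Sp(V)$-isotypic decomposition of $\Sym^2(\Sym^2 V^*)$ you offer as a sanity check is a nice addition not present in the paper, but it is not needed for the argument.
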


\begin{remark}
This result was established by \cite[Proposition 3.17]{81Coxres} and \cite[Proposition~3.1]{HausenKeicher} with some computer assistance, at least up to a choice of signs in the relations and in each $\psi_{i,j}$. 
\end{remark}

\begin{proof}
	The relations can be checked by an explicit computation using the description of $\iota$ from Remark~\ref{rem:iota-compute}; note by symmetry that it suffices to establish the first identity for a particular choice of distinct $i,j,k,\ell$ (the identity is trivial if any two indices are equal), and the second for a particular choice of $j \neq k$ and a particular choice of $j = k$.
	
For the convenience of the reader, let us explain how to derive the formulas for $\psi_{i,j}$.  First one must find the aforementioned orthonormal basis of $W$. 
Actually, any orthonormal basis will do,  since by conjugating $G$ by an element of $\Sp(V)$, the image $\ZZ_2^4$ of $G$ under $\Sp(V) \twoheadrightarrow \SO(W)$ is the group of diagonal matrices in any chosen orthonormal basis. To write one, let $v_i$ be a basis of $V$ and $x_i$ the dual basis of $V^*$. Let $\omega^{-1} =  v_1 \wedge v_3 + v_2 \wedge v_4$ on $V$.  This induces a nondegenerate symmetric bilinear form on $V^* \otimes V^*$, which restricts to one on $W$.
Up to overall scaling an orthonormal basis is, for $i := \sqrt{-1}$:
 $w_1=x_1 \wedge x_3 - x_2 \wedge x_4$, $w_2=x_1 \wedge x_2 + x_3 \wedge x_4$, $w_3 = i(x_1 \wedge x_2 - x_3 \wedge x_4)$, $w_4 = x_1 \wedge x_4 + x_2 \wedge x_3$, $w_5 = i(x_1 \wedge x_4 - x_2 \wedge x_3)$. Now the explicit map of Remark \ref{rem:iota-compute} immediately yields the following:
 \begin{center}
    \begin{tabular}{cclcccl}
$\psi_{1,2}$ & = &  $2(x_1 x_2 + x_3 x_4)$ & & $\psi_{2,4}$ & = & $x_1^2 - x_2^2 + x_3^2 - x_4^2$ \\
$\psi_{1,3}$ & = & $2i(x_1 x_2 - x_3 x_4)$ & & $\psi_{2,5}$ & = & $i(x_1^2 + x_2^2 - x_3^2 - x_4^2)$ \\
$\psi_{1,4}$ & = & $2(x_1 x_4 - x_2 x_3)$ & & $\psi_{3,4}$ & = & $i(x_1^2 - x_2^2 - x_3^2 + x_4^2)$ \\
$\psi_{1,5}$ & = & $2i(x_1 x_4 + x_2 x_3)$ & & $\psi_{3,5}$ & = & $-x_1^2 - x_2^2 - x_3^2 - x_4^2$ \\
$\psi_{2,3}$ & = & $-2i(x_1 x_3 + x_2 x_4)$ & & $\psi_{4,5}$ & = & $2i(-x_1 x_3 + x_2 x_4)$. 
\end{tabular}\end{center}
For example, $\psi_{1,2}$ is the contraction of $w_1 \wedge w_2$ with $\omega$: 
 sum over all ways of contracting the symplectic form $\omega$ to one component of $w_1$ and one of $w_2$ and multiply the other two components with the correct sign, yielding $x_1 x_2 +x_3 x_4 + x_2 x_1 + x_4 x_3 = 2(x_1x_2 + x_3 x_4)$. (Note that the above table corrects \cite[(3.13)]{81Coxres}, which almost matches ours after reordering and applying an overall scaling, but some coefficients in \emph{op.~cit.} are incorrect.)
 
	The algebra $\CC[V]^G$ is then identified with the subalgebra
	of $\CC[V]^{[G,G]}$ generated by those products of weight vectors (for $\Z_2^4$) in $\Sym^2 V^* \cong \wedge^2 W$ for which the weights sum to
	zero. Explicitly, we may identify $\Hom(\ZZ_2^4, \CC^{\times})$ with
	$\ZZ_2^4$ so that the weights on $W$
	are $e_1, e_2, e_3, e_4, -e_1-e_2-e_3-e_4$, with $e_i \in \ZZ_2^4$
	the standard coordinate vectors (we have
		$-e_1-e_2-e_3-e_4=e_1+e_2+e_3+e_4$, but writing in the above way
		resembles the reflection representation of $\SL_5$).  Then the
	weights on $\wedge^2 W \cong \Sym^2 V^*$ are
	$e_i + e_j, e_i - e_1-e_2-e_3-e_4$ for $i < j$.  Therefore, the
	products of generators $\psi_{i,j}$ having trivial weights are
	those in which the parity of the number of occurrences of every index $i$ equals the parity of the number of occurrences of the index $5$. But since there must be a total even number of indices occurring, this means that actually every index must occur an even number of times.
\end{proof}

\subsection{The isomorphism}
We are now in a position to prove Theorem~\ref{t:isom}.

\begin{thm} 
\label{thm:4foldsing}
The assignment $\varphi_{a_i, a_j}, \varphi_{a_i, a_j^*}, \varphi_{a_i^*, a_j}, \varphi_{a_i^*, a_j^*} \mapsto \psi_{i,j}$ extends to a surjective $\CC$-algebra homomorphism $\CC[\mu^{-1}(0)]^{\SL_2} \to \CC[V]^{[G,G]}$ that restricts to an isomorphism  $\CC[\mu^{-1}(0)]^{\GL_{\bv}} \to \CC[V]^G$. In particular, $X_5(0)\cong \mathfrak{M}_0(\bv,0)\cong V/G$.
\end{thm}
\begin{proof} 
Since $\lambda=0$, Proposition~\ref{p:gens-repsl2} says that the equations
		\begin{equation}
		\varphi_{a,b} \varphi_{c,d} - \varphi_{a,c} \varphi_{b,d} + \varphi_{a,d} \varphi_{b,c} = 0
		\end{equation}
		\begin{equation}
		\sum_i \varphi_{a_i,b} \varphi_{a_i^*, c} =0  
		\end{equation}
		hold in $\CC[\mu^{-1}(0)]^{\SL_2}$, so the surjective $\CC$-algebra homomorphism is well-defined by Proposition~\ref{p:rels-qt}. The restriction of this map to $\CC[\mu^{-1}(0)]^{\GL_{\bv}}$ is well-defined and maps onto $\CC[V]^G$ by these same propositions, giving a closed immersion $V/G \into \Mq_{0}(\bv,0)$. But these are irreducible varieties of the same dimension, so this closed immersion is an isomorphism.
\end{proof}

\section{Affine Hyperpolygon spaces are not quotient singularities more generally} \label{sec:5}
We have seen that the hyperpolygon space $\Hyp_n(0)$ is a finite symplectic quotient singularity for $n = 4$ or $5$. It is therefore natural to ask if we can identify $\Hyp_n(0)$ for $n > 5$ with a symplectic quotient singularity. In this section, we establish the following result. 

\begin{thm}\label{t:not-quotient-sing}
For $n > 5$, the affine hyperpolygon space $\Hyp_n(0)$ is not isomorphic to $V/G$ for any finite group $G \subset \Sp(V)$. 
\end{thm}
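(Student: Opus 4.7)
The plan is to proceed by induction on $n$, with the base case $n=6$ being the main obstacle.

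For the inductive step, assume $n \geq 7$ and that $X_{n-1}(0)$ is not a symplectic finite quotient singularity. Suppose for contradiction that $X_n(0) \cong V/G$ for some finite $G \subset \Sp(V)$. By Lemma~\ref{lem:sympleavesH0} and the remark following it, the closure of each codimension-two symplectic leaf in $X_n(0)$ is isomorphic to $X_{n-1}(0)$. Under the hypothetical isomorphism, the same closure is $V^\sigma / (N_G(\langle\sigma\rangle)/\langle\sigma\rangle)$, where $\sigma \in G$ is the symplectic reflection associated to that codim-2 leaf. Since $\sigma$ is a symplectic involution, the fixed locus $V^\sigma$ is a symplectic subspace of $V$, and $N_G(\langle\sigma\rangle)/\langle\sigma\rangle$ acts on it faithfully and symplectically. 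This would exhibit $X_{n-1}(0)$ as a symplectic finite quotient singularity, contradicting the inductive hypothesis.

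The base case $n = 6$ requires a direct argument, because the induction cannot start at $n = 5$: by Theorem~\ref{t:isom}, $X_5(0) \cong \CC^4/G$ is itself a quotient singularity. Assume $X_6(0) \cong V/G$ with $\dim V = 6$ and $G \subset \Sp(V)$. An analysis of the stratification provided by Lemma~\ref{lem:sympleavesH0} imposes strong constraints on $G$: (i) $G$ has exactly six conjugacy classes of order-2 symplectic reflections $\sigma_1,\ldots,\sigma_6$, with $|C_G(\sigma_i)| = 64$ and $C_G(\sigma_i)/\langle\sigma_i\rangle \cong Q_8 \times_{\ZZ_2} D_8$ (matching $\overline{\mc{L}_{\{i\}}} \cong X_5(0)$); (ii) for each pair $(i,j)$ there are commuting representatives generating a $\ZZ_2^2$ subgroup whose normalizer has order 32 with quotient $Q_8$ acting on the transverse $\CC^2$ (matching $\overline{\mc{L}_{\{i,j\}}} \cong X_4(0) = \CC^2/Q_8$); and (iii) the total number of conjugacy classes is $|\mathrm{Conj}(G)| = 49$, distributed by age as $(1,6,16,26)$, as read off from the Poincar\'e polynomial of any resolution (Konno \cite{KonnoHyper}) via the Chen--Ruan--McKay correspondence.

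The goal is then to derive a numerical contradiction from these constraints. A crude class-equation argument already rules out $|G|=64$: since $49 - |Z(G)|$ non-singleton classes have size at least $2$, one gets $64 \geq |Z(G)| + 2(49-|Z(G)|)$, forcing $|Z(G)| \geq 34$, so $|Z(G)| = 64$; but then $G$ is abelian with $|\mathrm{Conj}(G)| = 64 \neq 49$. The hard part will be ruling out larger $|G|$ compatible with $64 \mid |G|$; the most promising route is to compute the Hilbert series $H(\CC[X_6(0)],t)$ from the quiver variety and semi-invariant ring description (Proposition~\ref{p:gens-repsl2}), and to compare the leading behavior as $t \to 1$ with $1/|G|$ dictated by Molien's formula, or alternatively to show that the required central extensions $1 \to \langle\sigma_i\rangle \to C_G(\sigma_i) \to Q_8 \times_{\ZZ_2} D_8 \to 1$ together with the six pairwise-commuting-but-not-jointly-commuting reflection data cannot be realized faithfully inside $\Sp_6$. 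Pinpointing the exact arithmetic or structural incompatibility in the base case is the crux of the proof.
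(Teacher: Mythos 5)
Your inductive step is essentially sound: the closure of a codimension-two leaf of $V/G$ is $V^{\sigma}/N$ for $N$ the image of the normaliser of $\langle\sigma\rangle$ in $\Sp(V^{\sigma})$ (not a \emph{faithful} action of $N_G(\langle\sigma\rangle)/\langle\sigma\rangle$ as you write, but that is harmless), and matching this against $\overline{\mc{L}_{\{i\}}}\cong X_{n-1}(0)$ from the remark after Lemma~\ref{lem:sympleavesH0} does propagate non-realisability from $n-1$ to $n$. The fatal problem is that the proof has no base case: you explicitly leave $n=6$ open, and for an induction starting at $6$ that is not a detail but the entire content of the theorem there. Your constraints (i)--(iii) are plausible necessary conditions on a hypothetical $G\subset\Sp_6$ (and your count $1+6+16+26=49$ from Konno's Poincar\'e polynomial is correct), but neither proposed route to a contradiction is shown to terminate: a Molien-series comparison would at best pin down $|G|$, after which you would still face a case analysis over all groups of that order with the prescribed class data, and the ``cannot be realised inside $\Sp_6$'' claim for the extension data is precisely the hard structural statement you would need to prove. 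As written, the argument establishes nothing for any $n>5$.

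For comparison, the paper avoids any bespoke group theory at a fixed $n$ by quoting classification results. Verbitsky's theorem \cite{Verbitsky} forces any $G$ with $V/G$ admitting a projective crepant resolution to be a symplectic reflection group; irreducibility of the arrangement $\mc{A}$ (Lemma~\ref{lem:hyperirreducible}) together with the product decomposition of Namikawa arrangements (Proposition~\ref{prop:movableproduct}, Lemma~\ref{lem:GproductNamarrangement}) forces $(V,G)$ to be symplectically irreducible; the classification then leaves only wreath products $\s_m\wr K$, excluded because their Namikawa Weyl group $\Z_2\times W_K$ cannot equal $\Z_2^n$ for $n>5$, and the three primitive groups $W(S_1),W(R),W(U)$ of \cite{CohenQuaternionic}, excluded because all their symplectic reflections are conjugate, so any resolution has $b_2=1$ by Ito--Reid \cite{ItoReid}, whereas $b_2(X_n(\theta))=n$. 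Note that this last step is exactly your observation that $b_2=n$ constrains the junior classes of $G$; if you want to salvage your approach, the efficient move is to drop the induction entirely and combine that observation with the classification, rather than attempting a direct analysis of subgroups of $\Sp_6$.
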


\subsection{The movable cone of a product}
The proof of Theorem~\ref{t:not-quotient-sing} requires preparatory results. The following is presumably well-known, but we were unable to find the statement in the literature; for the statement under the additional assumption that $X_1$ is rational, see \cite[Lemme~6.6]{Sansuc}. 

\begin{lem}\label{lem:productpicfgnew}
Let $X_1,X_2$ be smooth (connected) quasi-projective varieties with $\Pic(X_1)$ finitely generated. Then, $\Pic (X_1 \times X_2) \cong \Pic(X_1) \oplus \Pic(X_2)$. 
\end{lem}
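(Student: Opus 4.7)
The plan is to reduce to the smooth projective case via compactification. By Nagata's theorem combined with Hironaka's resolution of singularities, we may choose smooth projective varieties $\overline{X}_i$ containing $X_i$ as a dense open subvariety for $i=1,2$; let $D_1^{(1)}, \dots, D_1^{(r_1)}$ and $D_2^{(1)}, \dots, D_2^{(r_2)}$ denote the irreducible divisorial components of the respective boundaries. The standard excision sequences read
\[
\bigoplus_{k} \ZZ[D_i^{(k)}] \to \Pic(\overline{X}_i) \to \Pic(X_i) \to 0, \quad i = 1,2,
\]
and analogously for the inclusion $X_1 \times X_2 \subseteq \overline{X}_1 \times \overline{X}_2$, whose boundary has divisorial components $D_1^{(k)} \times \overline{X}_2$ and $\overline{X}_1 \times D_2^{(\ell)}$.

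The central step is to use the hypothesis that $\Pic(X_1)$ is finitely generated to deduce $\Pic^0(\overline{X}_1) = 0$. The excision sequence above presents $\Pic(\overline{X}_1)$ as an extension of the finitely generated group $\Pic(X_1)$ by the finitely generated group spanned by the classes $[D_1^{(k)}]$, so $\Pic(\overline{X}_1)$ is itself finitely generated. The subgroup $\Pic^0(\overline{X}_1)$ consists of the $\CC$-points of an abelian variety, hence is a divisible abstract abelian group; since a divisible subgroup of a finitely generated abelian group is trivial, $\Pic^0(\overline{X}_1) = 0$, and by duality $\mathrm{Alb}(\overline{X}_1) = 0$.

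With $\mathrm{Alb}(\overline{X}_1) = 0$, the classical decomposition
\[
\Pic(\overline{X}_1 \times \overline{X}_2) \cong \Pic(\overline{X}_1) \oplus \Pic(\overline{X}_2) \oplus \Hom(\mathrm{Alb}(\overline{X}_1), \Pic^0(\overline{X}_2))
\]
for smooth projective varieties collapses to $\Pic(\overline{X}_1) \oplus \Pic(\overline{X}_2)$. Quotienting by the classes of the boundary divisors on both sides gives $\Pic(X_1 \times X_2) \cong \Pic(X_1) \oplus \Pic(X_2)$ by the excision sequences, and the resulting isomorphism coincides with the natural pullback map $(L_1, L_2) \mapsto p_1^*L_1 \otimes p_2^* L_2$.

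The main ingredient beyond standard facts is the divisibility of $\Pic^0$ of a smooth projective complex variety, which follows from the fact that multiplication by any positive integer is a surjective isogeny on an abelian variety. An alternative route would bypass compactification via a direct seesaw argument: setting $M := L \otimes p_1^*(L|_{X_1 \times \{x_2\}})^{-1} \otimes p_2^*(L|_{\{x_1\} \times X_2})^{-1}$, one would argue that $y \mapsto [M|_{X_1 \times \{y\}}] \in \Pic(X_1)$ is locally constant, and hence (by connectedness of $X_2$ together with $\Pic(X_1)$ being finitely generated) identically zero, so $M$ is a pullback from $X_2$ that is trivial on a fibre of $p_1$. Making local constancy precise in the quasi-projective setting, without an ambient proper relative Picard scheme, is delicate, which is why the compactification route is preferable.
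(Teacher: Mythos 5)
Your proof is correct, and its outer frame is exactly the paper's: compactify each $X_i$ to a smooth projective $\overline{X}_i$, use the excision sequence on divisor class groups both to see that $\Pic(\overline{X}_1)$ is finitely generated and, at the end, to descend the product formula from $\overline{X}_1\times\overline{X}_2$ to $X_1\times X_2$ (noting that the boundary divisors of the product are the pullbacks $p_1^*D_1^{(k)}$ and $p_2^*D_2^{(\ell)}$). Where you genuinely diverge is in the smooth projective step. The paper argues directly: having observed that $\Pic^0(\overline{X}_1)$ is trivial, it applies the seesaw principle (\cite[Proposition~5.10]{KleimanSurvey}) to show that all restrictions $\mc{L}_y$ of a line bundle $\mc{L}$ on $\overline{X}_1\times\overline{X}_2$ to fibres of the second projection are isomorphic, and then uses \cite[III, Exercise~12.4]{Hartshorne} to write $\mc{L}\cong\mc{K}\boxtimes\mc{N}$. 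You instead invoke the classical divisorial-correspondence decomposition $\Pic(\overline{X}_1\times\overline{X}_2)\cong\Pic(\overline{X}_1)\oplus\Pic(\overline{X}_2)\oplus\Hom\bigl(\mathrm{Alb}(\overline{X}_1),\Pic^0(\overline{X}_2)\bigr)$ and kill the third summand via $\mathrm{Alb}(\overline{X}_1)=0$. This is a legitimate shortcut --- the decomposition is itself established by essentially the seesaw-plus-pushforward argument the paper writes out, so you are packaging that work rather than avoiding it --- but it should come with a precise reference (e.g.\ to Kleiman's survey on the Picard scheme), since it is the entire content of the step. Two points in your version's favour: your derivation of $\Pic^0(\overline{X}_1)=0$, as a divisible subgroup of a finitely generated abelian group, is more careful than the paper's brief remark that the Picard scheme is discrete; and your closing observation correctly identifies why the seesaw argument is best run on the compactifications rather than on $X_1\times X_2$ itself.
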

\begin{proof}
We choose a projective compactification $\overline{X}_i$ of $X_i$. Since any quasi-projective variety over $\C$ admits a projective resolution of singularities by Hironaka's Theorem, we may assume that each $\overline{X}_i$ is smooth. If $E_{1}, \ds, E_k$ (resp. $F_1, \ds, F_{\ell}$) are the irreducible components of codimension one in $\overline{X}_1 \smallsetminus X_1$ (resp. in $\overline{X}_2 \smallsetminus X_2$) 
then standard exact sequence on divisor class groups gives 
$$
\bigoplus_{i = 1}^k \Z [\mc{O}(E_i)] \longrightarrow \Pic(\overline{X}_1) \longrightarrow \Pic(X_1) \longrightarrow 0, 
$$
and similarly for $X_2$. 
In particular, $\Pic(\overline{X}_1)$ is finitely generated. Since $\overline{X}_1$ is projective over $\C$, we may consider $\Pic(\overline{X}_1)$ as a group scheme; see \cite[Theorem~4.8]{KleimanSurvey}. Since it is a discrete group, the connected component $\Pic^0(\overline{X}_1)$ of the identity is trivial.

Now assume $\mc{M}$ is a line bundle on $\overline{X}_1 \times \overline{X}_2$ and there exists $y_0 \in \overline{X}_2$ with $\mc{M}_{y_0}$ (the restriction of $\mc{M}$ to $\overline{X}_1 \times \{ x_2 \}$) trivial. Then \cite[Proposition~5.10]{KleimanSurvey}  implies that every $\mc{M}_{y}$, for $y \in \overline{X}_2$, is trivial since $\overline{X}_2$ is connected. We deduce, more generally, that if $\mc{L}$ is a line bundle on $\overline{X}_1 \times \overline{X}_2$ then $\mc{L}_{y_1} \cong \mc{L}_{y_2}$ for all $y_i \in \overline{X}_2$. Set $\mc{K} = \mc{L}_{y_1}$ and consider the projections $\pi_i \colon \overline{X}_1 \times \overline{X}_2 \to \overline{X}_i$. Since $\pi_2$ is a flat projective morphism, \cite[III, Exercise~12.4]{Hartshorne} says that there exists a line bundle $\mc{N}$ on $\overline{X}_2$ such that $\mc{L} \cong \pi_1^* \mc{K} \otimes \pi^*_2 \mc{N}$. That is, $\mc{L} \cong \mc{K} \boxtimes \mc{N}$. Thus, the canonical map $\Pic(\overline{X}_1) \oplus \Pic(\overline{X}_2) \to \Pic(\overline{X}_1 \times \overline{X}_2)$ is surjective. The fact that it is injective is immediate: if $\mc{K} \boxtimes \mc{N} \cong \mc{O}$ then pulling back along $i \colon \overline{X}_1 \times \{ y_0 \} \to \overline{X}_1 \times \overline{X}_2$ implies that $\mc{K} \cong \mc{O}$ and similarly for $\mc{N}$. We deduce that $\Pic(\overline{X}_1) \oplus \Pic(\overline{X}_2) \cong \Pic(\overline{X}_1 \times \overline{X}_2)$. 

Finally, the corresponding result for $X_1 \times X_2$ follows from the fact that the rows in the commutative diagram
$$
\begin{tikzcd}
\bigoplus_{i = 1}^k \Z [\mc{O}(E_i)] \oplus \bigoplus_{j = 1}^{\ell} \Z [\mc{O}(F_j)] \ar[r] \ar[d,equal] & \Pic(\overline{X}_1 \times \overline{X}_2) \ar[r] & \Pic(X_1 \times X_2) \ar[r] & 0 \\
\bigoplus_{i = 1}^k \Z [\mc{O}(E_i)] \oplus \bigoplus_{j = 1}^{\ell} \Z [\mc{O}(F_j)] \ar[r] & \Pic(\overline{X}_1) \oplus \Pic(\overline{X}_2) \ar[r] \ar[u,"\wr"'] & \Pic(X_1) \oplus \Pic(X_2) \ar[r] \ar[u] & 0, 
\end{tikzcd}
$$
are exact. 
\end{proof}

 Let $Y$ be a conical symplectic singularity (see Section \ref{sec:sympleaves}). Suppose there exists a projective, crepant resolution of singularities $f\colon X \to Y$. Recall from Hu--Keel~\cite{HuKeel00} (see Grab~\cite{GrabThesis} and Ohta~\cite{Ohta} for the relative case) that $X$ is a \emph{relative Mori Dream Space over} $Y$ if \begin{enumerate}
 \item[\one] $\Pic(X/Y)_{\mathbb{Q}}\cong N^1(X/Y);$
 \item[\two] the relative nef cone $\Nef(X/Y)$ is generated by finitely many semiample
line bundles; and
 \item[\three] there exists $k\geq 0$ and  $\mathbb{Q}$-factorial varieties $X=X_0, X_1, \dots, X_k$, each projective over $Y$, as well as birational maps $\psi_i\colon X\dashrightarrow X_i$ over $Y$ for $0\leq i\leq k$ that are isomorphisms in codimension-one, such that  
 \begin{equation}
 \label{eqn:movconedecomp}
\Mov(X/Y) = \bigcup_{0\leq i\leq k} \psi_i^*\;\Nef(X_i/Y),
\end{equation}
 where each cone in this description is generated by finitely many semiample line bundles.
 \end{enumerate}
 \noindent In particular, the decomposition \eqref{eqn:movconedecomp} describes $\Mov(X/Y)$ as a finite polyhedral fan. The following result was first observed in the current context by Andreatta--Wi\'{s}niewski~\cite{AndreattaWisniewski14} in dimension four, and by Namikawa~\cite{NamikawaMDS} in general.
 
\begin{lem}
\label{lem:BCHM}
 The resolution $X$ is a relative Mori Dream Space over $Y$. In particular, the polyhedral decomposition \eqref{eqn:movconedecomp} depends only on $Y$, not on the choice of the projective crepant resolution.
\end{lem}
\begin{proof}
Observe that any divisor $D$ on $X$ is effective: the coherent sheaf $f_*(\mathcal{O}_X(D))$ has nonvanishing global sections as $Y$ is affine and $f$ is birational. In particular, $-A$ is effective for any $f$-ample divisor $A$. The same is true for $\Delta:=-\epsilon A$ for any rational number $\epsilon > 0$. Letting $\epsilon$ be sufficiently small, we see that $(X,\Delta)$ is klt, so it is divisorially log terminal by definition (cf.~\cite[Proposition~2.41]{KollarMori}). Since $-\Delta$ is $f$-ample by construction,  \cite[Corollary~1.3.2]{BCHM} gives that the Cox ring of $X$ is finitely generated which, by \cite[Corollary~1.1.5]{BCHM} (see also Grab~\cite[Theorem~3.4.7]{GrabThesis} or Ohta~\cite[Corollary~6.14]{Ohta}) implies that $X$ is a relative Mori Dream Space over $Y$. 

Finally, let $X\to Y$ and $X^\prime\to Y$ be projective crepant resolutions and let $\psi \colon X \dashrightarrow X^\prime$ be the resulting rational map over $Y$. Then $\psi$ is an isomorphism in codimension one \cite[Corollary 3.54]{KollarMori}, so we may identify the relative N\'{e}ron--Severi spaces of $X$ and $X^\prime$ over $Y$ via strict transform along $\psi$. This, in turn, identifies $\mr{Mov}(X/Y)$ and $\mr{Mov}(X^\prime/Y)$ as fans; see \cite[Definition~1.10(3)]{HuKeel00}. 
\end{proof}

\begin{remark}
\label{rem:BCHM}
The proof of Lemma~\ref{lem:BCHM} relies on \cite{BCHM}, because in order to understand the decomposition of $\Mov(X/Y)$ beyond the nef cone cone $X$ over $Y$, one must know that flops exist in arbitrary dimension. For projective resolutions constructed by VGIT, an alternative approach is given in \cite{BCS}.
\end{remark}

Namikawa~\cite{NamikawaMDS} proves that the fan decomposition  from Lemma~\ref{lem:BCHM} is given by intersecting the movable cone with  the complement of a hyperplane arrangement $\mc{D}_Y \subset N^1(X/Y)$. By Lemma~\ref{lem:BCHM}, the fan decomposition depends only on $Y$ and hence so does the hyperplane arrangement. Namikawa introduces the action of a group $W_Y$ on $N^1(X/Y)$, depending only on  $Y$ \cite{Namikawa} and, moreover, the movable cone is a fundamental domain for this action \cite{BLPWAst}. 

\begin{defn} 
We call $\mc{D}_Y$ the Namikawa hyperplane arrangement of $Y$ and $W_Y$ the Namikawa Weyl group of $Y$.
\end{defn}

It follows that in the case $Y=X_n(0)$, the arrangement $\mc{D}_Y$ equals $\mathcal{A}$ and the Namikawa Weyl group is $\mathbb{Z}_2^n$ as we found earlier.

 Now let $Y_1,Y_2$ be conical symplectic singularities. We assume that there exist projective crepant resolutions $f_i\colon X_i \to Y_i$ for $i=1, 2$. Define $X:= X_1 \times X_2$ and $Y:= Y_1 \times Y_2$, and consider the projective, crepant resolution $f=f_1\times f_2\colon X \to Y$. In particular, we may apply Lemma~\ref{lem:BCHM} to each of $X_1, X_2$ and $X$ over $Y_1, Y_2$ and $Y$ respectively.

\begin{prop}\label{prop:movableproduct}
The Picard group of $X_i$ is finitely generated for $i=1,2$, and moreover, the isomorphism of Lemma~\ref{lem:productpicfgnew} induces an isomorphism of relative N\'{e}ron--Severi spaces 
\begin{equation}
\label{eqn:N1product}
N^1(X/Y)\cong N^1(X_1/Y_1)\times N^1(X_2/Y_2).
\end{equation}
 This, in turn, induces an identification of finite polyhedral fans
\begin{equation}
\label{eqn:Movproduct}
\mr{Mov}(X/Y)\cong \mr{Mov}(X_1/Y_1)\times \mr{Mov}(X_2/Y_2).
\end{equation}
In particular, the Namikawa hyperplane arrangement of $Y$ is equal to the product of the Namikawa hyperplane arrangements of $Y_1$ and $Y_2$.
\end{prop}
 
\begin{proof}
 To prove that $\Pic(X_i)$ for $i=1,2$ is finitely generated, it suffices to prove that the $\ZZ$-linear map $\Pic(X_i)\to N^1(X_i/Y_i)$ sending a line bundle to its numerical class is injective. Let $L\in \Pic(X_i)$ satisfy $\deg(L\vert_C)=0$ for each curve $C$ contracted by $f_i$. Consider the pair $(X_i,\Delta)$ constructed in the proof of Lemma~\ref{lem:BCHM}, where $-\Delta$ is $f_i$-ample. Since $K_{Y_i} = 0$ and $f_i$ is crepant, we have $K_{X_i} = 0$ and hence $-(K_{X_i}+\Delta) = - \Delta$ is $f_i$-ample. The relative version of Kleiman's ampleness criterion now gives $(K_{X_i}+\Delta).z<0$ for all $z\in \overline{NE}(X_i/Y_i)\setminus \{0\}$, in which case the relative version of the Contraction Theorem~\cite[Theorem~3-2-1]{KawamataMatsudaMatsuki} gives that $L\cong (f_i)^*(M)$ for some ample line bundle $M$ on $Y_i$. However, the Picard group of $Y_i$ is trivial because $Y_i$ is a conical singularity, so $L\cong \mathcal{O}_{X_i}$. This shows that the map $\Pic(X_i)\to N^1(X_i/Y_i)$ is injective as required.
 
  We may now apply Lemma~\ref{lem:productpicfgnew} to obtain $\Pic (X) \cong \Pic(X_1) \oplus \Pic(X_2)$. For the  statement \eqref{eqn:N1product}, it suffices to show that if a line bundle $L$ has degree zero on all proper curves in $X_1$ and $X_2$, then it also has degree zero on every proper (connected) curve $C$ in $X_1 \times X_2$. But the intersection number $L.C = \deg (L\vert_C)$ depends only on the class of $C$ in $H_2(X,\Q) = H_2(X_1,\Q) \oplus H_2(X_2,\Q)$. Since this class can be written in terms of classes of curves in $X_1$ and $X_2$, we obtain $\deg (L\vert_C) = 0$.

For the statement \eqref{eqn:Movproduct}, recall that any relative Mori Dream Space $X$ over $Y$ can be constructed by GIT from its Cox ring $\mathcal{R}(X)$. The Cox ring of $X$ is the tensor product $\mathcal{R}(X)\cong \mathcal{R}(X_1)\otimes_{\mathbb{C}} \mathcal{R}(X_2)$ of the Cox rings of $X_1$ and $X_2$, equipped with the canonical grading by $\Pic(X_1)\oplus \Pic(X_2)$; see \cite[Lemma~4.2.2.3]{ADHL15}.  In particular, $X=X_1\times X_2$ is a GIT quotient for the action of the quasitorus $\Spec \CC[\Pic(X_1)]\times \Spec \CC[\Pic(X_2)]$ on $\Spec \mathcal{R}(X_1)\times \Spec \mathcal{R}(X_2)$ determined by the product group action, so the GIT chamber decomposition for the $\Pic(X)$-grading of $\mathcal{R}(X)$ is the product of the GIT chamber decompositions for the $\Pic(X_i)$-gradings of $\mathcal{R}(X_i)$ for $i=1,2$.

Within this product chamber decomposition, we need to check that the chambers whose union makes up $\mr{Mov}(X/Y)$ is precisely the the union of the product of chambers making up $\mr{Mov}(X_1/Y_1)$ and $\mr{Mov}(X_2/Y_2)$. For all  three of these GIT constructions, the movable cone is a subfan of the GIT fan characterised by the $\Pic$-degrees of a system of homogeneous generators of the relevant Cox ring~\cite[Proposition~3.3.2.3]{ADHL15}. A system of homogeneous generators of $\mathcal{R}(X)$ is given by taking the product $\{ g_i h_j \mid i \in I, j \in J \}$ over systems of homogeneous generators $\{ g_i \mid i \in I\}$ and $\{ h_j \mid j \in J \}$ of $\mathcal{R}(X_1)$ and $\mathcal{R}(X_2)$ respectively. It follows that the cone supporting the movable cone of $X_1\times X_2$ over $Y_1\times Y_2$ coincides with the product of the cones supporting the movable cones of $X_1$ over $Y_1$ and $X_2$ over $Y_2$ respectively. 

 Finally,  the movable cone contains the ample cone which is of full dimension for a projective morphism over an affine base. The statement about the Namikawa hyperplane arrangements now follows from the statement about the movable cones. This completes the proof.
\end{proof}

\subsection{Resolutions of quotient singularities}
Below, $V_1,V_2$ and $V$ will denote finite-dimensional symplectic vector spaces.

\begin{lem}\label{lem:nontrivaiNamarragement}
Let $G \subset \Sp(V)$ be a non-trivial finite group such that $Y = V/G$ admits a projective crepant resolution $f \colon X \to Y$. The Namikawa hyperplane arrangement in $N^1(X/Y)$ is non-trivial. 
\end{lem}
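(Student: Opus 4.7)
The plan is to use Verbitsky's theorem to obtain a codimension-$2$ Du Val singularity $\mathcal{L} \subset Y$ whose resolution produces an exceptional prime divisor $E \subset X$. Using local rigidity of the minimal resolution, I will argue that a neighborhood of $[E]$ in $N^1(X/Y)$ lies outside the movable cone, so that $\Mov(X/Y) \subsetneq N^1(X/Y)$; this in turn forces the Namikawa Weyl group $W_Y$ to contain a non-trivial reflection, whose reflection hyperplane lies in $\mc{D}_Y$.

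In detail: since $V/G$ admits a projective crepant resolution and $G$ is non-trivial, Verbitsky's theorem shows that $G$ is generated by symplectic reflections. Fix a symplectic reflection $g \in G$ of order $n \geq 2$. Its codimension-$2$ fixed subspace $V^g \subset V$ projects onto a codimension-$2$ symplectic leaf $\mathcal{L} \subset Y$ whose transverse singularity is the Du Val $A_{n-1}$ singularity $\C^2/\mu_n$. Étale-locally near a generic point of $\mathcal{L}$, the resolution $f \colon X \to Y$ agrees with the product of $\mathcal{L}$ with the unique minimal resolution of $\C^2/\mu_n$, and therefore contains an exceptional prime divisor $E$ meeting a smooth rational curve $C \subset E$ with intersection $[E] \cdot [C] = -2$. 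Any small $\Q$-factorial modification $X' \dashrightarrow X$ over $Y$ is an isomorphism near $\mathcal{L}$, because the minimal resolution of $A_{n-1}$ is unique, so the strict transforms $E', C' \subset X'$ still satisfy $[E'] \cdot [C'] = -2$. By continuity of intersection numbers, every class $[D]$ in a sufficiently small neighborhood of $[E]$ in $N^1(X/Y)$ satisfies $[D'] \cdot [C'] < 0$ on every small model $X'$, so it belongs to no $\Nef(X'/Y)$. Consequently $[E]$ does not lie in the movable cone $\Mov(X/Y) = \overline{\bigcup_{X'} \Nef(X'/Y)}$, and since $[E] \neq 0$ we conclude $\Mov(X/Y) \subsetneq N^1(X/Y)$.

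Finally, as $\Mov(X/Y)$ is a fundamental domain for the action of $W_Y$ on $N^1(X/Y)$, properness of $\Mov(X/Y)$ forces $W_Y$ to contain a non-trivial reflection, whose reflection hyperplane lies in $\mc{D}_Y$; hence $\mc{D}_Y$ is non-trivial. The main potential obstacle is establishing $[E] \notin \Mov(X/Y)$, which I handle via the rigidity of the local minimal resolution of the Du Val $A_{n-1}$ singularity and the resulting persistence of the negative intersection $-2$ under small $\Q$-factorial modifications; the rest is standard polyhedral geometry.
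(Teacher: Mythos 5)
Your argument is correct, but it takes a genuinely different and considerably heavier route than the paper. The paper's proof is essentially two lines: since $G \subset \Sp(V)$ contains no pseudo-reflections, $Y=V/G$ is singular, so $f$ is not an isomorphism; the relative Kleiman criterion then gives a nonzero $f$-ample class, so $N^1(X/Y) \neq 0$, while $0$ is nef but not $f$-ample; since the ample cone is a chamber of the central arrangement $\mc{D}_Y$ and $0$ lies in its closure but not in the chamber itself, the arrangement cannot be empty. You instead invoke Verbitsky's theorem to produce a symplectic reflection, use the transverse Du Val geometry along the resulting codimension-two leaf to exhibit an exceptional divisor class $[E]$ with $[E]\cdot[C]=-2$ lying outside $\Mov(X/Y)$, and then appeal to the fundamental-domain property of $\Mov(X/Y)$ for $W_Y$. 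This works and in fact proves more (that $\Mov(X/Y) \subsetneq N^1(X/Y)$ and that $W_Y \neq 1$), but at the cost of extra machinery. Two points deserve care. First, the transverse singularity along the image of $V^g$ is $\CC^2/H'$ for $H'$ the image in $\SL_2$ of the pointwise stabiliser of $V^g$, which need not be cyclic of order $n=\mathrm{ord}(g)$; this is harmless, since any nontrivial Du Val singularity supplies the $(-2)$-curve you need, and the uniqueness of the minimal resolution still guarantees that all small $\QQ$-factorial modifications agree over the generic point of the leaf (which is what you need for the intersection number to be preserved under strict transform). Second, your last step uses that the reflection hyperplanes of $W_Y$ (equivalently, the supporting hyperplanes of the boundary walls of $\Mov(X/Y)$) belong to $\mc{D}_Y$; this is true in Namikawa's theory, but it is a little more than the paper explicitly recalls, whereas the paper's own argument only needs that the ample cone is a chamber of a central arrangement.
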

Observe in this case that $Y$ is singular by results of Shephard--Todd~\cite{ST} and Chevelley~\cite{Chevalley} because $G$ is not a complex reflection group; indeed it contains contains no complex reflections.
\begin{proof}
Since $V/G$ is singular, $f$ is not an isomorphism. Let $\mathcal{O}_X(D)$ be $f$-ample. The divisor $D$ is numerically positive against all (necessarily proper) curves contracted by $f$. In particular, $[D] \neq 0$ in $N^1(X/Y)$, so $N^1(X/Y) \neq 0$. Since $f$ is not an isomorphism, $0 \in N^1(X/Y)$ cannot be $f$-ample. Thus, there is a hyperplane containing $0$; in fact, the Namikawa arrangement is central so it lies on all hyperplanes.
\end{proof}

 A hyperplane arrangement is reducible if it is a product of non-trivial hyperplane arrangements. More formally, $\mathcal{A}$ is \emph{reducible} if  if there exist nonempty subarrangements $\mathcal{A}_1$ and $\mathcal{A}_2$ such that $\mathcal{A}=\mathcal{A}_1\cup \mathcal{A}_2$ is a disjoint union, and after a linear change of coordinates, the linear polynomials defining the hyperplanes in $\mathcal{A}_1$ and in $\mathcal{A}_2$ share no common variable.

\begin{lem}\label{lem:GproductNamarrangement}
Let $G_i \subset \Sp(V_i)$ for $i = 1,2$ be finite groups, both non-trivial. If $Y = V_1 / G_1 \times V_2 / G_2$ admits a projective crepant resolution $X^\prime \to Y$, then the Namikawa hyperplane arrangement in $N^1(X^\prime/Y)$ is reducible.  
\end{lem}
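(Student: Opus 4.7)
The plan is to reduce the statement to Proposition~\ref{prop:movableproduct} by first establishing that each factor $Y_i := V_i/G_i$ admits its own projective crepant resolution. Granted this, fix such resolutions $f_i\colon X_i \to Y_i$; then $f_1\times f_2\colon X_1\times X_2 \to Y$ is a projective crepant resolution, and by Lemma~\ref{lem:BCHM} the Namikawa hyperplane arrangement is an invariant of $Y$ alone. In particular, the arrangement in $N^1(X'/Y)$ may be identified with that in $N^1((X_1\times X_2)/Y)$ via strict transform along the codimension-one isomorphism $X' \dashrightarrow X_1\times X_2$, so it is enough to analyse the product.

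Proposition~\ref{prop:movableproduct} then provides an identification $N^1((X_1\times X_2)/Y) \cong N^1(X_1/Y_1) \oplus N^1(X_2/Y_2)$ under which the Namikawa arrangement of $Y$ is the union of the pullbacks of the Namikawa arrangements of $Y_1$ and $Y_2$. Since each $G_i \subset \Sp(V_i)$ is non-trivial and contains no complex reflections, each $Y_i$ is singular, so Lemma~\ref{lem:nontrivaiNamarragement} guarantees that the Namikawa arrangement of each $Y_i$ is non-empty. The Namikawa arrangement of $Y$ therefore contains hyperplanes of both of the forms $H_1 \oplus N^1(X_2/Y_2)$ and $N^1(X_1/Y_1) \oplus H_2$, which is precisely the statement that it is reducible with respect to the direct-sum decomposition above.

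The substantive step is the initial claim that each $Y_i$ admits a projective crepant resolution, and this is where I expect the main obstacle to lie. The cleanest route is via \'etale-local transverse slices: for a point $y_2$ in the open symplectic leaf of $Y_2$, an \'etale neighbourhood of $(0,y_2) \in Y$ splits as a product of an \'etale neighbourhood of $y_2$ in $Y_2$ (which is smooth) with an \'etale neighbourhood of the cone point $0\in Y_1$. Pulling $X' \to Y$ back along this neighbourhood yields a symplectic resolution of an \'etale neighbourhood of $0\in Y_1$, and by the contracting $\CC^\times$-action on $Y_1$ this upgrades to a projective symplectic resolution of the whole cone $Y_1$; a symmetric argument handles $Y_2$. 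An alternative route invokes the factorisation of the universal Poisson deformation of $Y$ as a product of the universal Poisson deformations of $Y_1$ and $Y_2$, combined with Namikawa's criterion for existence of a symplectic resolution in terms of smoothability; this arrives at the same conclusion without passing through slices. Either way, the product-structure input to Proposition~\ref{prop:movableproduct} is secured, and the rest of the argument is formal.
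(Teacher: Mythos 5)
Your proposal is correct and, in its second half, coincides with the paper's argument: once each factor $Y_i=V_i/G_i$ is known to admit a projective crepant resolution $X_i\to Y_i$, the paper likewise passes to $X_1\times X_2\to Y$, identifies the movable cones (and hence the Namikawa arrangements) of $X'$ and $X_1\times X_2$ over $Y$ via the last paragraph of the proof of Lemma~\ref{lem:BCHM}, and concludes by Proposition~\ref{prop:movableproduct}; your explicit appeal to Lemma~\ref{lem:nontrivaiNamarragement} to ensure both factors of the arrangement are non-trivial (so that the decomposition is a genuine reduction) is a point the paper leaves implicit, and it is good that you made it. The one place where you diverge is the step you correctly flag as the substantive one: the paper disposes of the existence of crepant resolutions of the factors in a single line by citing Kaledin's theorem (\cite[Theorem~1.6]{KaledinDynkin}), applied to the stabiliser $G_1\times 1$ of a point $(0,v_2)$ with $v_2$ generic, whereas you sketch a direct proof via transverse slices. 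Your sketch is essentially the proof strategy of Kaledin's result, and it is sound in outline, but be aware that it conceals two non-trivial inputs that you would need to cite or prove: first, that the fibre product $X'\times_Y(Y_1\times\{y_2\})$ is again a symplectic resolution (this is the étale-local triviality of symplectic resolutions along symplectic leaves, not a formal consequence of transversality); and second, that an étale-local symplectic resolution at the cone point of $Y_1$ globalises to a projective one (this uses $\Q$-factorial terminalisations, their uniqueness up to flops, and the fact that flops of symplectic varieties preserve smoothness, together with the contracting $\CC^\times$-action forcing any singular locus to meet the central fibre). The citation of Kaledin buys you all of this at once, which is why the paper's proof is three lines long; your route is more self-contained in spirit but, as written, defers the real work to unnamed ``known machinery.''
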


\begin{proof}
Applying \cite[Theorem~1.6]{KaledinDynkin} to the action of $G=G_1\times G_2$ on $V=V_1\times V_2$ implies that each $Y_i = V_i/G_i$ admits a projective crepant resolution $X_i \to Y_i$. Thus, $X_1 \times X_2 \to Y$ is also a projective crepant resolution of $Y$. As described in the final paragraph of the proof of Lemma~\ref{lem:BCHM}, we may identify $\Mov(X^\prime/Y)$ with $\Mov(X_1\times X_2/Y)$. The result follows from Proposition~\ref{prop:movableproduct}.   
\end{proof}

\begin{lem}\label{lem:hyperirreducible}
The hyperplane arrangement $\mc{A}$ in $\Theta$ is irreducible.  
\end{lem}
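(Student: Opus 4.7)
The plan is to apply the standard criterion (see, e.g., Orlik--Terao) that a central hyperplane arrangement in a vector space $W$ is reducible if and only if there exists a nontrivial direct sum decomposition $W = W_1 \oplus W_2$ such that every hyperplane in the arrangement contains $W_1$ or $W_2$; equivalently, every defining linear form lies in $\mathrm{Ann}(W_2) \cup \mathrm{Ann}(W_1) \subset W^*$. I will show that no such nontrivial decomposition of $\Theta \cong \Q^n$ exists.

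First, I would use the coordinate hyperplanes $L_1, \ldots, L_n$ to force any putative decomposition $\Theta = V_1 \oplus V_2$ to be compatible with the standard basis. Writing $U_j \subset \Theta^*$ for the annihilator of $V_{3-j}$, we have $\Theta^* = U_1 \oplus U_2$ with $U_1, U_2 \neq 0$, and the normal of every hyperplane in $\mc{A}$ must lie in $U_1 \cup U_2$. Since the normal of $L_i$ is the $i$-th coordinate functional $e_i^* \in \Theta^*$, each $e_i^*$ lies in $U_1$ or $U_2$. As $\{e_1^*, \ldots, e_n^*\}$ is a basis of $\Theta^*$, this produces a partition $\{1, \ldots, n\} = A \sqcup B$ with $U_1 = \mathrm{span}\{e_i^* : i \in A\}$ and $U_2 = \mathrm{span}\{e_j^* : j \in B\}$, and both $A$ and $B$ nonempty by the nontriviality of the decomposition.

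Next, I would derive a contradiction using $H_{\{1\}}$, whose defining linear form $\theta_1 - \theta_2 - \cdots - \theta_n$ has normal $e_1^* - e_2^* - \cdots - e_n^*$ with all $n$ coefficients nonzero. For this normal to lie in $U_1$, every index $i \in \{1, \ldots, n\}$ would have to belong to $A$, forcing $B = \emptyset$; similarly, lying in $U_2$ would force $A = \emptyset$. Either possibility contradicts the nontriviality of the decomposition, so $\mc{A}$ is irreducible.

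There is no substantive obstacle here: the argument works for every $n \geq 2$ and uses only the single key feature that the ``total'' hyperplane $H_{\{1\}} \in \mc{A}$ has a defining form of full support, making it incompatible with any coordinate-wise splitting forced by the $L_i$'s.
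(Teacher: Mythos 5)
Your proof is correct and follows essentially the same route as the paper's: the coordinate hyperplanes $L_i$ force any putative decomposition to be a coordinate splitting $\{1,\dots,n\}=A\sqcup B$, and a hyperplane $H_I$ (you use $H_{\{1\}}$), whose defining form has full support, is then incompatible with both factors being nonzero. The only difference is that you phrase the reducibility criterion dually via annihilators in $\Theta^*$ rather than via containment of the subspaces $\Theta_i$ in the hyperplanes, which is a purely cosmetic change.
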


\begin{proof}
Assume that $\Theta = \Theta_1 \times \Theta_2$ and $\mc{A} = \mc{A}_1 \cup \mc{A}_2$ for $\mc{A}_i$ an arrangement in $\Theta_i$. Then, for each $H \in \mc{A}$, $H \in \mc{A}_1$ means that $\Theta_2 \subset H$ etc. Since the coordinate hyperplanes $L_i$ belong to $\mc{A}$, this forces $\Theta_1 = (\theta_i = 0 : i \notin J)$ and $\Theta_2 = (\theta_j = 0 : j \in J)$ for some $J \subset [1,n]$. But then the $H_I$ do not contain either $\Theta_1$ or $\Theta_2$ unless $J \neq \emptyset, [1,n]$. Thus, $\mc{A}$ is irreducible.  
\end{proof}

If $G \subset \Sp(V)$ is a finite group then the pair $(V,G)$ is said to be \textit{symplectically reducible} if $V = V_1 \times V_2$ is a proper decomposition into symplectic $G$-submodules. Otherwise, $(V,G)$ is said to be \textit{symplectically irreducible}; see \cite[Introduction]{BS-sra2} for more on this notion. Recall that $g\in G$ is a \emph{symplectic reflection} if it fixes a linear subspace of codimension two, and $G$ is a \emph{symplectic reflection group if it is generated by symplectice reflections.}

\begin{cor}\label{cor:hyperpolyquotiso}
If, for some $n \ge 4$, there is an isomorphism $\Hyp_n(0) \cong V/G$ with $G \subset \Sp(V)$ finite, then $(V,G)$ is a symplectically irreducible symplectic reflection group.  
\end{cor}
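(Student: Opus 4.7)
The plan is to combine three ingredients already in hand: a theorem of Verbitsky on quotient singularities admitting crepant resolutions, the factorisation of symplectic reflection groups over reducible symplectic representations, and the irreducibility of the Namikawa arrangement of $\Hyp_n(0)$ that has been established via $L_F$ and $\mathcal{A}$.

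First, I would dispatch the ``symplectic reflection group'' half. By Corollary~\ref{cor:symplecticreshyperpoly}, the affine hyperpolygon $\Hyp_n(0)$ admits a projective crepant resolution, so any $V/G$ isomorphic to it does too. By Verbitsky's theorem (see e.g.\ \cite{Verbitsky} or Kaledin), this forces $G$ to be generated by symplectic reflections, i.e.\ $G$ is a symplectic reflection group.

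For the symplectic irreducibility half, I would argue by contradiction. Assume $V = V_1 \oplus V_2$ is a proper decomposition into $G$-invariant symplectic subspaces. Each symplectic reflection $s \in G$ preserves both $V_i$ and fixes a codimension-two subspace of $V$, so $s$ must act as the identity on one of the $V_i$ and as a symplectic reflection on the other. Since $G$ is generated by symplectic reflections, writing $G_i := \{g \in G \mid g|_{V_j} = \id \text{ for } j \neq i\} \subseteq \Sp(V_i)$, we obtain $G = G_1 \times G_2$, and hence an isomorphism of conical symplectic singularities $V/G \cong V_1/G_1 \times V_2/G_2$. If both $G_i$ are nontrivial, Lemma~\ref{lem:GproductNamarrangement} tells us that the Namikawa hyperplane arrangement of any crepant resolution of $V/G$ is reducible. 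Yet the same arrangement, viewed through the isomorphism $\Hyp_n(0) \cong V/G$, corresponds under $L_F$ (via Theorem~\ref{thm:movable} and Proposition~\ref{prop:beyondF}, using that the Namikawa arrangement is intrinsic to the symplectic singularity, cf.\ Lemma~\ref{lem:BCHM}) to $\mathcal{A}$, which is irreducible by Lemma~\ref{lem:hyperirreducible}. This gives a contradiction.

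It remains to rule out the degenerate case where one factor, say $G_1$, is trivial, so that $V/G \cong V_1 \times V_2/G_2$ with $V_1 \neq 0$ a smooth symplectic vector space. Under the isomorphism $\Hyp_n(0) \cong V/G$, the unique $\CC^\times$-fixed point of $\Hyp_n(0)$ maps to the origin of $V/G$. The symplectic leaf of $V_1 \times V_2/G_2$ containing the origin is $V_1 \times \{0\}$, of positive dimension $\dim V_1 > 0$, whereas Lemma~\ref{lem:sympleavesH0} identifies the leaf of $\Hyp_n(0)$ through the origin as the zero-dimensional stratum $\mathcal{L}_0 = \{0\}$, a contradiction. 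Combining both cases yields the symplectic irreducibility of $(V,G)$. The main conceptual obstacle in the argument is the requirement to identify, across the abstract isomorphism $\Hyp_n(0) \cong V/G$, the Namikawa hyperplane arrangement with $\mathcal{A}$; this is precisely the purpose of having proved Theorem~\ref{thm:movable} and Proposition~\ref{prop:beyondF} in full generality, and once it is available the rest of the argument is formal.
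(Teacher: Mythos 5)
Your proposal is correct and follows essentially the same route as the paper: Verbitsky's theorem gives that $G$ is a symplectic reflection group, the identification of $\mathcal{A}$ with the (intrinsic) Namikawa arrangement together with Lemma~\ref{lem:hyperirreducible} and Lemma~\ref{lem:GproductNamarrangement} rules out a decomposition with both factors nontrivial, and the zero-dimensional leaf from Lemma~\ref{lem:sympleavesH0} rules out a trivial factor. The only cosmetic difference is in the last step: you do not need to know that the $\CC^\times$-fixed point maps to the origin (the isomorphism is not assumed equivariant) --- since symplectic leaves are intrinsic and every leaf of $V_1\times V_2/G_2$ contains a $V_1$-factor, the product simply has no zero-dimensional leaf at all, which is how the paper phrases it.
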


\begin{proof}
Choose $\theta \in \Theta$ generic. By Corollary~\ref{cor:symplecticreshyperpoly}, the morphism $X = \Hyp_n(\theta) \to \Hyp_n(0) = Y$ is a projective crepant resolution. Moreover, under the identification $L_F \colon \Theta \iso N^1(X/Y)$ of Theorem~\ref{thm:resolutionsofstar}, the arrangement $\mc{A}$ in $\Theta$ is identified with the Namikawa arrangement. Therefore, Lemma~\ref{lem:hyperirreducible} implies that the latter is irreducible.

If $X \cong V/G$, then $G$ is a symplectic reflection group by Verbitsky's Theorem \cite[Theorem~3.2]{Verbitsky} because $X$ admits a projective crepant resolution. In this case, if $V$ is symplectically reducible then $V = V_1 \times V_2$ and $G = G_1 \times G_2$, with $G_i$ acting on $V_i$. Moreover, $G_i \neq 1$ because $X_n(0)$ has a zero-dimensional leaf by Lemma~\ref{lem:sympleavesH0} and symplectic leaves of a symplectic singularity are preserved under isomorphism (they are the strata of the singular locus stratification \cite{Kaledinsympsingularities}). Lemma~\ref{lem:GproductNamarrangement} then implies that the Namikawa arrangement in $N^1(X/Y)$ is reducible. This contradicts the conclusion of the previous paragraph. Thus, $(V,G)$ is symplectically irreducible. 
\end{proof}

\begin{proof}[Proof of Theorem~\ref{t:not-quotient-sing}]
Assume that $\Hyp_n(0) \cong V/G$. By Corollary~\ref{cor:hyperpolyquotiso}, $(V,G)$ is symplectically irreducible and $V/G$ admits a projective crepant resolution. We also know from Proposition~\ref{prop:beyondF} that its associated Namikawa Weyl group is $\Z_2^n$. Based on the the paragraph after Question~9.5 of \cite{BS-sra2} and \cite[Remark~6.2]{BellSchmittThiel}, either $G = \s_n \wr K$ is a wreath product group (with $K \subset \SL(2)$) or potentially $G$ is one of the (complex) primitive symplectic reflection groups $W(S_1), W(R)$ or $W(U)$ acting on a $6,8$ or $10$ dimensional space, respectively (in the notation of \cite[Section~4]{CohenQuaternionic}).

If $G = \s_n \wr K$ for $n>1$ and $W_K$ is the (irreducible) Weyl group of type ADE associated to $K$ via the McKay correspondence, then by \cite[Theorem~1.7]{BC} the Namikawa Weyl group of a projective symplectic resolution $X$ of $V / G$ is $\Z_2 \times W_K$. Then $\Z_2 \times W_K \cong \Z_2^n$, but the only abelian Weyl group of type $ADE$ is of type $A_1$, where $W_K\cong \Z_2$ and hence $n = 2$. But we assumed $n > 5$. 

Therefore, we must rule out $G = W(S_1), W(R)$ or $W(U)$. It was shown recently by computer computations in \cite{SympSteinberg} that these quotient singularities do not admit projective symplectic resolutions, but to avoid relying on computer calculations, we do not use this result here. To rule these cases out directly, note that if $\Hyp_n(0)$ is isomorphic to $V/G$ then $\Hyp_n(\theta) \to V/G$ is a projective symplectic resolution for $\theta$ generic. On the one hand, as in the proof of Proposition~\ref{prop:dimN1}, $\dim H^2(\Hyp_n(\theta),\C) = n$. On the other, by Lemma~\ref{lem:H2Wexceptional} below, $\dim H^2(\Hyp_n(\theta),\C) = 1$; a contradiction.  
\end{proof}

In the statement, and proof, of the following lemma, we use freely the notation from \cite[Section~4]{CohenQuaternionic} where the irreducible symplectic reflection groups with primitive complexification are classified.  

\begin{lem}\label{lem:H2Wexceptional} 
Let $G$ equal one of the (complex) primitive symplectic reflection groups $W(S_1), W(R)$ or $W(U)$ and assume that $V/G$ admits a projective symplectic resolution $X \to V/G$. Then $H^2(X,\Q)$ is one-dimensional. 
\end{lem}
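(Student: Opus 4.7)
The plan is to reduce the computation of $\dim H^2(X, \Q)$ to a count of conjugacy classes of symplectic reflections in $G$, and then to carry out that count for each of the three primitive groups.

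For the reduction, I would invoke the following general dimension formula: for any finite symplectic reflection group $G \subset \Sp(V)$ such that $V/G$ admits a projective symplectic resolution $X \to V/G$, one has
\[
\dim H^2(X, \C) \;=\; |S/G|,
\]
where $S \subset G$ is the set of symplectic reflections and $|S/G|$ denotes the number of $G$-conjugacy classes in $S$. This equality combines Etingof--Ginzburg's construction of the symplectic reflection algebra at $t=0$, which yields a flat family of Poisson deformations of $\C[V]^G$ with base $\C[S]^G$, with the fact that this family is the universal Poisson deformation of $V/G$ (by Losev, or equivalently by Ginzburg--Kaledin in this setting), together with Namikawa's theory of Poisson deformations of conical symplectic singularities, which identifies the base of the universal Poisson deformation of $V/G$ with $H^2(X, \C)$ up to a finite cover by the Namikawa Weyl group. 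Since $\dim H^2(X, \Q) = \dim H^2(X, \C)$, the formula holds with $\Q$-coefficients as well.

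It then remains to verify, for each of $G = W(S_1), W(R), W(U)$, that the symplectic reflections in $G$ form a single conjugacy class. I would do this by consulting the explicit reflection data for the primitive quaternionic reflection groups tabulated in \cite[Section~4]{CohenQuaternionic}: in each case $G$ acts primitively on $V$, its reflecting codimension-two planes form a single $G$-orbit, and the pointwise stabiliser of each such plane is cyclic of order $2$, so the corresponding symplectic reflections are involutions all lying in a single conjugacy class. Alternatively, since each group is given explicitly as a finite matrix group, the count of reflection classes can be confirmed with a computer algebra system such as GAP or Magma.

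The main anticipated obstacle is this concrete verification for $W(U)$, whose order runs to the millions; the stabiliser-of-plane and conjugacy-class computations are bounded but tedious. Granting the count, however, the conclusion $\dim H^2(X, \Q) = 1$ is immediate from the dimension formula.
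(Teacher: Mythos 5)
Your proposal is correct, and the second half (reducing to the fact that the symplectic reflections in $W(S_1)$, $W(R)$, $W(U)$ form a single conjugacy class, verified from Cohen's root-system data: a single orbit of roots and stabiliser group $H_a=\Z_2$) is essentially identical to the paper's argument. Where you genuinely diverge is in the reduction of $\dim H^2(X,\Q)$ to the count of conjugacy classes of symplectic reflections. The paper gets this in one line from Ito--Reid: $\dim H^2(X,\Q)$ equals the number of \emph{junior} conjugacy classes of $G$, and for a symplectic group action the junior classes are exactly the classes of symplectic reflections. You instead route through the Etingof--Ginzburg deformation over $\C[S]^G$, its universality as a Poisson deformation (Losev/Bellamy), and Namikawa's identification of the universal deformation base with $H^2(X,\C)$ modulo the Namikawa Weyl group. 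Both reductions are valid and yield the same formula $\dim H^2(X,\C)=|S/G|$; the finite quotient by the Namikawa Weyl group that you flag does not affect the dimension count. The Ito--Reid route is more elementary and self-contained, which is presumably why the paper chose it; your deformation-theoretic route invokes heavier machinery but sits more naturally alongside the Namikawa-theoretic framework the paper already uses elsewhere (movable cones, Namikawa Weyl groups), and it generalises to settings where one only has a $\Q$-factorial terminalisation rather than a smooth resolution. Your fallback suggestion of a machine computation for $W(U)$ is unnecessary: the single-orbit statement is already in Cohen's tables, and the paper deliberately avoids computer verification at this point.
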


\begin{proof}
By \cite[Corollary~1.5]{ItoReid}, $\dim H^2(X,\Q)$ equals the number of ``junior'' conjugacy classes in $G$ which, in light of \cite[Lemma~2.6]{KaledinMcKay}, can be defined to be the conjugacy classes of symplectic reflections. As noted in \cite[Remark~4.3(ii)]{CohenQuaternionic}, each symplectic reflection in $G$ (for our choice of groups) is of the form $s_{a,\xi}$ for some $(a,\xi) \in \Sigma$, where $\Sigma$ is the root system associated to $G$, as in \cite[Table~II]{CohenQuaternionic}. Then, 
$$
\xi \in H_a := \{ \lambda \in SU(2) \, | \, s_{a,\lambda} \in G \}.
$$
Inspecting \cite[Table~II]{CohenQuaternionic} shows that $H_a = \Z_2$, and hence $\xi = -1$ for our three groups. It is explained in the proof of \cite[Theorem~4.2]{CohenQuaternionic} that the roots $a$ in $\Sigma$ form a single $G$-orbit. We deduce that all symplectic reflections are conjugate.    
\end{proof}

\section{Hyperk\"ahler metrics}
\label{sec:6}
Recall that $X_n(\theta)$ carries a complete hyperk\"ahler metric for any generic choice of $\theta$. Recently, there has been much interest in the asymptotic geometry of such spaces. In this final (discursive) section, we note the implications of our main results for asymptotic geometry of $X_n(\theta)$. The resolution of singularities $X_n(\theta) \to X_n(0)$ identifies the singularity $X_n(0)$ with the tangent cone at infinity of the hyperk\"ahler manifold $X_n(\theta)$. The full description of the asymptotic geometry of $X_n(\theta)$ requires precise estimates on the decay of the true hyperk\"ahler metric on $X_n(\theta)$ to the (singular) cone metric on $X_n(0)$. 

\subsection{Metrics: the case $n = 4$}

It is known that $X_{4}(\theta)$ is \emph{asymptotically locally Euclidean} (ALE). In general, for a smooth, noncompact hyperk\"ahler manifold of real dimension $4k$, this means that the asymptotic geometry is modelled on the singular space $\C^{2k}/G$ for some finite subgroup $G$ in $\Sp(k) = \Sp(2k,\C) \cap \mathrm{U}(2k)$, whose action on the unit sphere $S^{4k-1} \subset \C^{2k}$ is free. In particular, the orbifold cone $\C^2/G$ has an isolated singularity at the origin. For $k=2$, the hyperk\"ahler ALE-spaces were classified by Kronheimer~\cite{Kronheimer1, Kronheimer2} to be the crepant resolutions of the Kleinian singularities $\C^2/G$ for the finite subgroups $G < \mathrm{SU}(2)=\Sp(1)$. The hyperpolygon space $X_{4}(\theta)$ corresponds to the (affine) $D_4$ case, that is, to the quaternion group $Q_8 < \mathrm{SU}(2)$, and hence $X_4(0)$ is the orbifold cone $\C^2/Q_8$ which has an isolated singularity at the origin.

\subsection{Metrics: the case $n = 5$}

As a further generalisation of ALE, a smooth noncompact hyperk\"ahler manifold is  called \emph{quasi-asymptotically locally Euclidean} (QALE) if its asymptotic geometry is modelled on $\C^{2k}/G$ for a finite subgroup $G$ in $\Sp(k)$ that does not necessarily act freely on the unit sphere in $\C^{2k}$; here $\C^{2k}/G$ has singular strata given by non-isolated quotient singularities. Both ALE and QALE metrics have maximal, that is, Euclidean volume growth. Regarding larger values of $n$, Theorem~\ref{t:isom} shows that the hyperpolygon space $X_{5}(0)$ is the orbifold cone $\CC^4/G$ for the group $G = Q_8 \times_{\ZZ_2} D_8$ in $\Sp(2)=\Sp(4,\CC) \cap \mathrm{U}(4)$. This provides significant evidence for the following conjecture:

\begin{conjecture}\label{conj:QALE}
    For generic $\theta$, the space $X_{5}(\theta)$ is quasi-asymptotically locally Euclidean.
\end{conjecture}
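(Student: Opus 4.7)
The plan is to combine Theorem~\ref{t:isom} with the hyperk\"ahler quotient description of $X_5(\theta)$ to establish QALE asymptotics in the style of Joyce's construction of QALE Ricci-flat K\"ahler metrics on crepant resolutions of Gorenstein orbifold singularities. The identification $X_5(0)\cong \CC^4/G$ supplies the candidate orbifold cone at infinity; the task is to show that the hyperk\"ahler metric on $X_5(\theta)$ approaches the flat metric on $\CC^4/G$ at the correct controlled rate along each of the non-isolated singular strata of $\CC^4/G$.

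First, stratify $\CC^4/G$ by conjugacy classes of stabilizers for the $G$-action. For $G=Q_8\times_{\ZZ_2}D_8$ the non-free locus decomposes into finitely many two-dimensional symplectic strata meeting at the origin. Near a generic point of such a stratum, $\CC^4/G$ admits a local product decomposition (up to finite cover) of the form $\CC^2\times(\CC^2/H)$, where $H<\Sp(1)$ is the stabilizer acting on the normal slice. Via the hyperk\"ahler slice theorem for quiver varieties, I expect the preimage of such a slice neighborhood in $X_5(\theta)$ to be \'{e}tale-locally modelled on $\CC^2\times Y_H$, where $Y_H\to\CC^2/H$ is a Kronheimer ALE resolution. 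This step uses the quiver description critically: slices at semisimple representations of the star-shaped quiver are themselves quiver varieties for smaller quivers, and Theorem~\ref{thm:resolutionsofstar} ensures that $\theta$ restricts to a generic stability parameter on each slice, producing honest smooth ALE models.

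Second, assemble these local ALE pieces into a global model metric $g_{\mathrm{mod}}$ on $X_5(\theta)$ outside a compact set, using a Joyce-style gluing procedure to interpolate between the flat metric on the regular locus of $\CC^4/G$ and the transverse ALE metrics near each stratum, with the scaling of each ALE piece chosen to produce the requisite QALE decay. One then compares $g_{\mathrm{mod}}$ with the actual hyperk\"ahler metric $g_\theta$ from the hyperk\"ahler reduction: both are Ricci-flat, so $g_\theta-g_{\mathrm{mod}}$ satisfies a quasilinear elliptic system with rapidly decaying inhomogeneity, and an iteration argument in weighted H\"older spaces, modelled on \cite{Kronheimer1} in the ALE case and Joyce's method in the QALE case, should deliver the desired polynomial decay estimates.

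The main obstacle lies precisely in this analytic comparison. In the ALE case of $X_4(\theta)$, the stratification of $\CC^2/Q_8$ is trivial (a single isolated singularity), so Kronheimer's argument applies directly. For $X_5(\theta)$, however, several strata of different dimensions meet at the origin, and the approximate model must be built inductively along the stratification using weighted function spaces that simultaneously capture differing decay rates in the tangential and transverse directions. A secondary obstacle is to verify that the hyperk\"ahler slice theorem for the star-shaped quiver yields models that agree with the Kronheimer ALE metrics to sufficiently high order --- not merely holomorphically but metrically; this should follow from an equivariant hyperk\"ahler Darboux-type theorem comparing the slice reduction with the ambient reduction, but pinning down the precise order of agreement along each stratum appears to be the most delicate point.
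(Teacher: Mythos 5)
The statement you are attempting to prove is stated in the paper as Conjecture~\ref{conj:QALE}; the paper offers no proof, only evidence (namely Theorem~\ref{t:isom}, which supplies the candidate orbifold cone $\CC^4/G$ at infinity) together with a discussion, in the subsection on metrics for $n=5$, of exactly why a proof is currently out of reach. Your proposal follows the same strategy the authors sketch there --- build a QALE model metric in the style of Joyce and compare it with the hyperk\"ahler-reduction metric --- and the ``main obstacle'' you name at the end is precisely the obstacle the paper names. So what you have written is a research programme, not a proof, and the gap is not a minor technicality: it is the entire analytic content of the conjecture.

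Concretely, the step that fails as written is the comparison argument. You assert that since $g_\theta$ and $g_{\mathrm{mod}}$ are both Ricci-flat, their difference ``satisfies a quasilinear elliptic system with rapidly decaying inhomogeneity,'' and that a weighted-space iteration ``should deliver'' polynomial decay. But to seed such an iteration you need an a priori decay estimate on $g_\theta - g_{\mathrm{mod}}$ in the relevant weighted norms, and that initial decay of the hyperk\"ahler-reduction metric toward the cone metric is exactly the unknown quantity --- this is the point the paper makes when it observes that applying the uniqueness part of Joyce's theorem ``would require specific knowledge about the asymptotics of the competing hyperk\"ahler metric'' (see also the cited discussion of the analogous issue for Hilbert schemes in \cite{Carron}). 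Without that input the argument is circular. A second, independent gap is your local model step: the \'etale-local identification of slice neighbourhoods with $\CC^2 \times Y_H$ for Kronheimer ALE spaces is a holomorphic (indeed algebraic) statement coming from the quiver slice theorem, and upgrading it to a \emph{metric} statement --- that the induced hyperk\"ahler metric agrees with the Kronheimer ALE metric to the order required by the QALE gluing --- is not established by any result quoted in the paper; you flag this yourself as ``the most delicate point,'' which is an accurate self-assessment that the proof is incomplete there as well.
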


There are two reasons that make this conjecture plausible. First, by work of Joyce \cite{Joyce}, there exists a unique hyperk\"ahler QALE metric on the crepant resolution of a symplectic quotient singularity in each QALE K\"ahler class (with regards to the complex structure $I$). However, it is \emph{a priori} not at all clear that the natural metric on $X_5(\theta)$---that is, the one given by hyperk\"ahler reduction \cite{HKLR}---can also be obtained by the construction of Joyce. To apply Joyce's result one would first have to construct a QALE metric in the given K\"ahler class. While this may presumably be carried out by an inductive procedure, applying the uniqueness portion of Joyce's theorem would require specific knowledge about the asymptotics of the competing hyperk\"ahler metric; see \cite{Carron} for a discussion in the context of hyperk\"ahler metrics for the planar Hilbert scheme of points. 

Second, one may compare the metric geometry of the moduli space of (parabolic) Higgs bundles on an $n$-punctured $\mathbb{P}^1$ to that of $X_{n}(\theta)$, given the embedding of the former into the latter as described in \cite{GodinhoMandini,FisherRayan,RayanSchaposnik}.  In particular, one of the moment map conditions that defines $X_{n}(\theta)$ as a hyperk\"ahler quotient is an explicit linearisation of the relation that defines the character variety of an $n$-punctured sphere. The Higgs bundle moduli space, which is one of the avatars of the character variety under nonabelian Hodge theory, comes with certain expectations about the decay of its hyperk\"ahler metric to a certain ``semiflat'' model metric on a torus bundle over a half-dimensional base as one goes to infinity. This behaviour is broadly classed as ``ALG''  --- see, for example, \cite{Perspectives}.  We note that ALG is not an acronym; rather, it is obtained by iteration from ALE, with ALF as the intermediary.  In general, this entails a much slower volume growth. (In complex dimension $2$, the volume growth for ALG is quadratic and is explicitly verified for the $n=4$ parabolic Higgs moduli space in \cite{ALG}.)  The realisation of $X_{n}(\theta)$ as a linear version of this geometry suggests, for arbitrary $n$, a more rapid, namely fully Euclidean volume growth in contrast to its Higgs bundle counterpart.  For more on this point of view, see \cite{RayanSchaposnik}.

\subsection{Metrics: the general case}

For $n \geq 6$,  Theorem~\ref{t:not-quotient-sing} shows that $X_n(0)$ is not an orbifold cone of the form $\C^{2k}/G$ for a finite subgroup $G$ in $\Sp(k)=\Sp(2k),\C) \cap \mathrm{U}(2k)$, where $k = n-3$. As a consequence, we obtain a proof of Corollary~\ref{cor:QALEnot} which we restate here for convenience:

\begin{cor}
\label{cor:restated1.6}
For $n > 5$, $X_n(\theta)$ is not quasi-asymptotically locally Euclidean for any $\theta$.  
\end{cor}
\begin{proof}
  If the hyperk\"ahler metric on $X_n(\theta)$ given by hyperk\"ahler reduction were QALE, then the tangent cone at infinity $X_n(0)$---that is, \ the limit of the sequence of homothetic rescalings $(X_n(\theta),\lambda^2g)$ as $\lambda \to 0$---would be the Riemannian orbifold $\C^{2k}/G$. This contradicts Theorem~\ref{t:not-quotient-sing}.
\end{proof}

It is natural to ask whether $X_n(\theta)$ is \emph{quasi-asymptotically conical} (QAC) in the sense of Degeratu--Mazzeo~\cite{DegeratuMazzeo}. This notion generalises QALE in that it does not require the tangent cone at infinity to be an orbifold, while maintaining full Euclidean volume growth.

\subsection{The McKay correspondence.}
Finally, it is appropriate to frame this discussion within the context of the McKay correspondence.  The fact that hyperk\"ahler ALE metrics on smooth noncompact four-manifolds are determined, up to the choice of Torelli periods \cite{Kronheimer1,Kronheimer2}, by finite subgroups of $\mathrm{SU}(2)$, is a geometric extension of the usual McKay correspondence between finite subgroups of $\mathrm{SU}(2)$ and affine Dynkin diagrams of ADE type.  This geometric extension can be regarded as an ALE/ADE correspondence.  The variety $X_4(\theta)$ is precisely the affine $D_4$ case of this correspondence, with $X_4(\theta)$ and its hyperk\"ahler metric being recovered from the Dynkin quiver via the Nakajima quiver variety construction \cite{Nak1994}. While there are many versions of the McKay correspondence for finite subgroups of $\mathrm{SU}(3)$ in \cite{ReidMckay}, it is very rare that such results hold for finite subgroups of $\mathrm{SU}(r)$ for $r>3$. In this context, Theorem~\ref{t:isom} provides an instance where the quiver \emph{does} determine a finite subgroup of $\mathrm{SU}(4)$, and a corresponding hyperk\"ahler geometry. Note however that in our case, the determinants of all bar one of the tautological bundles associated to the vertices of the quiver in Figure~\ref{fig:quivers} provide a basis for the Picard group, but these bundles do not provide a basis of the Grothendieck group; in particular, they do not generate the derived category of coherent sheaves as is the case in the McKay correspondence.

\small{
\def\cprime{$'$}

\bibliographystyle{plain}

}

\end{document}